\newtheorem{theorem}{Theorem}[section]
\newtheorem{corollary}{Corollary}[section]
\newtheorem{lemma}{Lemma}[section]
\newtheorem{remark}{Remark}[section]
\newtheorem{assumption}{Assumption}[section]
\begin{document}
	\begin{frontmatter}
		
		\title{ Strong asymptotic convergence of a slowly damped inertial primal-dual dynamical system controlled by a Tikhonov regularization term \tnoteref{mytitlenote}}
		
		%\tnotetext[mytitlenote]{This work was supported by  the National Natural Science  Foundation of China(11471230).}
		
		\author[mymainaddress]{T.-T. Zhu}
		\ead{zttsicuandaxue@126.com}
		
		\author[mysecondaryaddress]{R. Hu}
		\ead{ronghumath@aliyun.com}
		
		\author[mymainaddress]{Y.-P. Fang\corref{mycorrespondingauthor}}
		\cortext[mycorrespondingauthor]{Corresponding author}
		\ead{ypfang@scu.edu.cn}
		
		\address[mymainaddress]{Department of Mathematics, Sichuan University, Chengdu, Sichuan, P.R. China}
		\address[mysecondaryaddress]{Department of Applied Mathematics, Chengdu University of Information Technology, Chengdu, Sichuan, P.R. China}
		
		\begin{abstract}
			We propose a slowly damped inertial primal-dual dynamical system controlled by a  Tikhonov regularization term, where the inertial term is introduced only for the primal variable,  for the linearly constrained convex optimization problem in a Hilbert space. Under mild conditions on the underlying parameters, by  a Lyapunov analysis approach, we prove the strong asymptotic convergence of  the trajectory of the proposed dynamic  to the minimal norm element of the  primal-dual solution set of the  problem, along with convergence rate results for  the primal-dual gap, the objective residual and the feasibility violation. We perform some numerical experiments to illustrate the theoretical findings.
		\end{abstract}
		
		\begin{keyword}
		 Linearly constrained convex optimization problem\sep  Inertial primal-dual dynamical system\sep Tikhonov regularization\sep Slow damping \sep Strong convergence \sep Minimal norm primal-dual solution.				
		\end{keyword}		
	\end{frontmatter}
	
	\section{Introduction} 

	Let $\mathcal{X}$ and $\mathcal{Y}$ be two real Hilbert spaces with the inner product $\langle \cdot, \cdot\rangle$ and the associated norm $\|\cdot\|$. The norm of the Cartesian product $\mathcal{X}\times\mathcal{Y}$ is defined by
	$$ \|(x,y)\|=\sqrt{\|x\|^2+\|y\|^2}$$
	for any $(x,y)\in \mathcal{X}\times\mathcal{Y}$.
	Let $f: \mathcal{X}\rightarrow \mathbb{R}$ be a continuously differentiable convex function, $A: \mathcal{X}\rightarrow \mathcal{Y}$ be a continuous linear operator and $b\in\mathcal{Y}$. Consider the linear equality constrained convex optimization problem
	\begin{equation}\label{z1}
		\min_{x\in\mathcal{X}} f(x), \quad \text{ s.t. }  \  Ax = b.
	\end{equation}
	Problem \eqref{z1} is a basic model for many important applications arising in machine learning, image recovery, network optimization and the energy dispatch of power grids. See e.g. \cite{ZLinandLiHandFang(2020),GoldsteinTandDonoghue(2014),ZengXLandLeiJLandChenJ(2022), PYiandHongYandLiu(2015)}. When $A=0$ and $b=0$, problem \eqref{z1} reduces to the unconstrained convex optimization problem
	\begin{equation}\label{zfg1}
		\min_{x\in\mathcal{X}}  \quad f(x).
	\end{equation} 
To solve   problem \eqref{zfg1}, a powerful approach is to follow the trajectory of the following inertial dynamical system
\begin{eqnarray}\label{IGS}
			\ddot{x}(t)+\gamma(t) \dot{x}(t)+\nabla f(x(t))=0,\quad\forall t\ge t_0,
	\end{eqnarray}	
where  $\gamma:[t_0,+\infty)\to [0,+\infty)$ is a continuous damping function and $t_0>0$. The tuning of the damping function $\gamma(t)$ plays a central role for establishing the minimization properties of the trajectory of \eqref{IGS}.  Cabot et al. \cite{Cabot} proved that the condition $\int_{t_0}^{+\infty} \gamma(t)dt=+\infty$ guarantees that the energy function $f$ along the trajectory $x(t)$ of  \eqref{IGS} converges toward its minimum.  The  case $\gamma(t)=\frac{\alpha}{t^q}$ with $0\le q\le 1$ and $\alpha>0$ is  of particular interest and importance in the literature. In this case, system \eqref{IGS} becomes
	\begin{eqnarray*}
		\text{(IGS)}_{q}\quad\quad
		\ddot{x}(t)+\frac{\alpha}{t^{q}} \dot{x}(t)+\nabla f(x(t))=0,\quad\forall t\ge t_0,
	\end{eqnarray*}
where $\frac{\alpha}{t^{q}}$ denotes a slow damping which  cannot decay rapidly to zero. Especially, when $q=0$, $\text{(IGS)}_{q}$ becomes  the classical heavy ball with friction system due to Polyak \cite{Polyak(1964)} which  models  the motion of a material point with unit mass under the action of the gravity force, the reaction force, and the friction force, subjected to stay on the graph of $f$. The convergence properties of  the heavy ball with friction system  were  investigated in \cite{Alvarezon(2000), Begoutbj(2015)}. When $q=1$, $\text{(IGS)}_{q}$ becomes the known inertial dynamical system proposed by Su et al. \cite{SuBoydandCandes(2016)} for understanding the acceleration of the Nesterov's accelerated algorithm \cite{Nesterov(1983),Nesterov(2013)}, and its convergence properties were intensively studied in  \cite{SuBoydandCandes(2016),AttouchCPR2018,May2017,AttouchCRR2019,Vassilis2018,Aujol2019}. Convergence rate results for $\text{(IGS)}_{q}$ with $0<q<1$ can be found in \cite{Cabot,Cabjde,Hara,Balti2017,Attouchcabot(2017),Sebb,GeB}.

Meanwhile, inertial dynamical systems controlled by Tikhonov regularization  terms have been developed  to  find the minimal norm solution of the problem under consideration, which is particular imprtant in practical situations. The following Tikhonov regularized inertial dynamical system
	\begin{eqnarray*}
		\text{(IGS)}_{q,\epsilon}\quad\quad
		\ddot{x}(t)+\frac{\alpha}{t^{q}} \dot{x}(t)+\nabla f(x(t))+\epsilon(t)x(t)=0
	\end{eqnarray*}
has been proposed in the literature to find  the minimal norm solution of  problem \eqref{zfg1}, where  $\epsilon:[t_0,+\infty)\to[0,+\infty)$ satisfying $\lim_{t\to+\infty}\epsilon(t) =0$  acts as a control on the trajectory, named as the Tikhonov regularization coefficient. With the additional term $\epsilon(t)x(t)$, compared with  $\text{(IGS)}_{q}$, the  trajectory of $\text{(IGS)}_{q,\epsilon}$ can converge strongly to the minimal norm solution of  problem \eqref{zfg1} under a suitable condition on the control parameter $\epsilon(t)$.  Under the condition  $\int_{t_0}^{+\infty}\epsilon(t)dt=+\infty$,  Attouch and Czarnecki \cite{AttouchandCzarnecki(2002)} proved  that  the trajectory of $\text{(IGS)}_{q,\epsilon}$ with $q=0$ converges strongly to the minimal norm solution of problem \eqref{zfg1}.  When $\epsilon(t)$ converges slowly to zero and $\alpha\ge 3$, Attouch et al. \cite{AttouchZH2018} established the strong convergence  in the inferior limit sense of the trajectory $x(t)$ of  $\text{(IGS)}_{q,\epsilon}$ with $q=1$ to  the minimal norm solution $x^*$ of problem \eqref{zfg1}, i.e., $\lim\inf_{t\rightarrow+\infty}\|x(t)-{x}^*\|=0$. Attouch and L{\'a}szl{\'o} \cite{Attouchlaszlo2021} showed  the strong convergence  in the inferior limit sense of the trajectory of $\text{(IGS)}_{q,\epsilon}$ with $\epsilon(t)=\frac{1}{t^{2q}}$ and $\frac{1}{3}<q<1$ to the minimal norm solution of  problem \eqref{zfg1}. Attouch et al. \cite{AttouchBCR2022311}   proved  the strong convergence of the trajectory $x(t)$ of  $\text{(IGS)}_{q,\epsilon}$ with $\epsilon(t)=\frac{1}{t^{2q}}$ and  $0<p<q+1$ to the minimal norm solution $x^*$ of   problem \eqref{zfg1} and derived the fast convergence rate $f(x(t))-\min f=\mathcal{O}\left(\frac{1}{t^{2q}}\right)$,  improving the result of \cite{Attouchlaszlo2021}. L{\'a}szl{\'o} \cite{Laszlo2023} further obtained the strong convergence of the trajectory $x(t)$ of  $\text{(IGS)}_{q,\epsilon}$ with  $\epsilon(t)=\frac{c}{t^{p}}$,  $0<q<1$ and $0<p<q+1$ to the minimal norm solution $x^*$ of  problem \eqref{zfg1}, along with  fast convergence rate results. For more results on this topic, we refer the reader to   \cite{botcsernek2021, Alecsalaszlo2021}.

In recent years, some inertial primal-dual dynamical systems  were developed for  the linear equality constrained convex optimization problem \eqref{z1}.  Zeng et al. \cite{ZengXLandLeiJLandChenJ(2022)} proposed the first inertial primal-dual dynamical system in the literature,  which is formulated as 
\begin{eqnarray*}
	\text{(Z-AVD)}\quad
	\begin{cases}
		\ddot{x}(t)+\frac{\alpha}{t}\dot{x}(t)&=-\nabla_x\mathcal{L}^{\rho}(x(t),\lambda(t)+\theta t\dot{\lambda}(t)),\\
		\ddot{\lambda}(t)+\frac{\alpha}{t}\dot{\lambda}(t)&=\nabla_{\lambda}\mathcal{L}^{\rho}(x(t)+\theta t\dot{x}(t),\lambda(t)),
	\end{cases}\quad\forall t\ge t_0,
\end{eqnarray*}
where $\alpha>0,$ $\theta>0$ and  $\mathcal{L}^{\rho}(x, \lambda)$ is the augmented Lagrangian function of problem \eqref{z1} with the penalty parameter  $\rho\ge 0$. Zeng et al. \cite{ZengXLandLeiJLandChenJ(2022)} proved  fast convergence rates for the primal-dual gap and the feasibility violation along the trajectory of $\text{(Z-AVD)}$,  extending the work of Su et al. \cite{SuBoydandCandes(2016)} from the unconstrained problem \eqref{zfg1} to the linearly constrained problem \eqref{z1}. Motivated by the work of  Zeng et al. \cite{ZengXLandLeiJLandChenJ(2022)},  He et al. \cite{HeHuFangetal(2021)} and Attouch et al. \cite{AttouchADMM(2022)}  proposed inertial primal-dual dynamical systems with general time-dependent dampings for solving problem \eqref{z1} with a separable structure.   Bo{\c t} and Nguyen \cite{BNguyen2022} improved the convergence rate results of Zeng et al. \cite{ZengXLandLeiJLandChenJ(2022)}, and  proved the  weak convergence  of the trajectory to a primal-dual optimal solution of problem \eqref{z1}, which is the first work on the weak convergence of the trajectory  in the literature. He et al. \cite{HHFIPD2023} further discussed the  convergence rate analysis of the following  inertial primal-dual dynamical system
	\begin{eqnarray*}
		\text{(He-ODE)}\quad
		\begin{cases}
			\ddot{x}(t)+\frac{\alpha}{t^q}\dot{x}(t)&=-\beta(t)\nabla_x\mathcal{L}^{\rho}(x(t),\lambda(t)+\theta t^{\kappa}\dot{\lambda}(t))+\varepsilon(t),\\
			\ddot{\lambda}(t)+\frac{\alpha}{t^q}\dot{\lambda}(t)&=\beta(t)\nabla_{\lambda}\mathcal{L}^{\rho}(x(t)+\theta t^{\kappa}\dot{x}(t),\lambda(t)), 
		\end{cases}\quad\forall t\ge t_0,
	\end{eqnarray*}
	where $0\leq q\leq \kappa\leq1$, $\beta: [t_0, +\infty)\to (0, +\infty)$ is a scaling coefficient and $\varepsilon: [t_0, +\infty)\to \mathcal{X}$ acts as a perturbation term. It is worth noticing that inertial primal-dual dynamical systems considered in \cite{ZengXLandLeiJLandChenJ(2022),HeHuFangetal(2021), AttouchADMM(2022),BNguyen2022,HHFIPD2023} have a same second-order plus second-order structure,  which involve inertial terms for both the primal and dual variables. 

Different from the inertial primal-dual dynamical systems considered in \cite{ZengXLandLeiJLandChenJ(2022),HeHuFangetal(2021), AttouchADMM(2022),BNguyen2022,HHFIPD2023}, He et al. \cite{HeHFetal(2022),HeHFiietal(2022)}  developed some inertial primal-dual dynamical systems with a second-order plus first-order structure, where the inertial term is introduced only for the primal variable,  for solving problem \eqref{z1}.
The first second-order plus first-order inertial primal-dual dynamic in the literature is formulated as
\begin{eqnarray*}
		\begin{cases}
		\ddot{x}(t)+\alpha\dot{x}(t)& = -\beta(t)\nabla_x \mathcal{L}^{\rho}(x(t),\lambda(t)),\\
		 \dot{\lambda}(t) &=\beta(t)\nabla_{\lambda}\mathcal{L}^{\rho}(x(t)+\theta\dot{x}(t),\lambda(t)),
	\end{cases}
\quad\forall t\ge 0,
	\end{eqnarray*}
which was proposed  by He et al. \cite{HeHFetal(2022)} and can be viewed as an extension of  Polyak's  heavy ball with friction system \cite{Polyak(1964)}. He et al. also \cite{HeHFiietal(2022)} proposed  and studied the following  second-order plus first-order inertial primal-dual dynamical system with a vanishing damp
	\begin{eqnarray*}
				\begin{cases}
			\ddot{x}(t)+\frac{\alpha}{t}\dot{x}(t)&=-\beta(t)\nabla_x \mathcal{L}(x(t),\lambda(t))
			+\varepsilon(t),\\
			\dot{\lambda}(t)&=t\beta(t)\nabla_{\lambda} \mathcal{L}(x(t)+\frac{t}{\alpha-1}\dot{x}(t),\lambda(t)), 
		\end{cases}\quad\forall t\ge t_0,
	\end{eqnarray*}
which can be regarded as an extension of the inertial dynamical system due to Su et al. \cite{SuBoydandCandes(2016)},	where $\alpha>1$ and $\mathcal{L}(x,\lambda)$ is the  Lagrangian function of problem \eqref{z1}.

Recently, some researchers started to investigate  inertial primal-dual dynamical systems controlled by Tikhonov regularization terms for the linear equality constrained convex optimization problem \eqref{z1}.  The first Tikhonov regularized inertial primal-dual dynamical system was proposed by Zhu et al. \cite{zhuhufang1}, which is formulated as
	\begin{eqnarray}\label{zhf1}
		\begin{cases}
			\ddot{x}(t)+\frac{\alpha}{t}\dot{x}(t)&=-\nabla_x \mathcal{L}^{\rho}(x(t),\lambda(t))-\epsilon(t)x(t),\\
			\dot{\lambda}(t)&=t\nabla_{\lambda} \mathcal{L}^{\rho}(x(t)+\dfrac{t}{\alpha-1}\dot{x}(t),\lambda(t)),
		\end{cases}\quad\forall t\ge t_0.
	\end{eqnarray}
Under the conditions that  $\lim_{t\to+\infty}t^2\epsilon(t)= +\infty$ and $\int_{t_0}^{+\infty}\frac{\epsilon(t)}{t}dt<+\infty$, Zhu et al. \cite{zhuhufang1} proved  the strong convergence of the primal trajectory $x(t)$ of \eqref{zhf1}  to the minimal norm solution   $x^*$ of problem \eqref{z1}  in the inferior limit sense, i.e.,  $\lim\inf_{t\rightarrow+\infty}\|x(t)-x^*\|=0$. By introducing the scaling term and  the Tikhonov regularization term into $\text{(Z-AVD)}$,  Zhu et al. \cite{zhuhufang2} also proposed the following  Tikhonov regularized inertial primal-dual dynamical system
\begin{eqnarray}\label{zhu22}
	\begin{cases}
		\ddot{x}(t)+\frac{\alpha}{t}\dot{x}(t)&=-\beta(t)\left(\nabla_x\mathcal{L}^{\rho}(x(t),\lambda(t)+\theta t\dot{\lambda}(t)) +\epsilon(t)x(t)\right),\\
		\ddot{\lambda}(t)+\frac{\alpha}{t}\dot{\lambda}(t)&=\beta(t)\nabla_{\lambda}\mathcal{L}^{\rho}(x(t)+\theta t\dot{x}(t),\lambda(t)).
	\end{cases}
\end{eqnarray}
Under the following  conditions
\begin{eqnarray*}
	t\dot{\beta}(t)\leq\frac{1-2\theta}{\theta}\beta(t), \int_{t_{0}}^{+\infty}\frac{\beta(t)\epsilon(t)}{t}dt<+\infty, \lim_{t\to+\infty}t^2\beta(t)\epsilon(t)= +\infty,
	\end{eqnarray*}
Zhu et al. \cite{zhuhufang2} proved the strong convergence   in the inferior limit sense  of the primal trajectory of  \eqref{zhu22} to the minimal norm solution of problem \eqref{z1}.  Let us emphasize that  only the strong convergence  in the inferior limit sense of the primal trajectory $x(t)$ to the minimal norm solution $x^*$ was established in \cite{zhuhufang1,zhuhufang2} because the primal-dual dynamic under consideration involves the Tikhonov regularization term only for the primal variable.  Very recently,  Chbani et al. \cite{ChbaniRBOn(2024)} proposed the following Tikhonov regularized primal-dual dynamical system with constant damping    
	\begin{eqnarray}\label{z21}
		\begin{cases}
			\ddot{x}(t)+\alpha\dot{x}(t)+t^p\nabla_{x}\mathcal{L}(x(t), \lambda(t))+cx(t)&=0,\\
			\dot{\lambda}(t)-t^{p}\nabla_{\lambda}\mathcal{L}(x(t)+\theta\dot{x}(t), \lambda(t))+c\lambda(t)&=0,
		\end{cases}
	\end{eqnarray}
where $\alpha>0$, $0<p<1$, $c>0$ and $\theta>0$, which can be viewed as  a Tikhonov regularization variant of  the inertial primal-dual dynamic due to He et al. \cite{HeHFetal(2022)}. Notice that system \eqref{z21} involves  the Tikhonov regularization terms  for both the primal and dual variables.
Under suitable conditions,  Chbani et al. \cite{ChbaniRBOn(2024)} proved that the trajectory $(x(t),\lambda(t))$ of  \eqref{z21} converges strongly to the minimal norm primal-dual solution $(x^*,\lambda^*)$ of  problem \eqref{z1}, i.e.,
$$\lim_{t\rightarrow+\infty}\|(x(t), \lambda(t))-(x^*,\lambda^*)\|=0,$$
 along with convergence rate results of the primal-dual gap, the objective residual and the feasibility violation. It is worth mentioning that the proofs of \cite[Theorem 3.2]{ChbaniRBOn(2024)} on convergence rates for  the objective residual and the feasibility violation were based on \cite[Lemma 6]{HeHFiietal(2022)} (\cite[Lemma 2.1]{ChbaniRBOn(2024)}),  which cannot be  applied there since the  function $a(s)$ is dependent on $t$.  In this paper, we  consider the following  slowly damped  inertial primal-dual dynamical system controlled by  a Tikhonov regularation term 
\begin{eqnarray}\label{z2}
		\begin{cases}
			\ddot{x}(t)+\frac{\alpha}{t^q}\dot{x}(t)+t^s\left(\nabla_{x}\mathcal{L}(x(t), \lambda(t))+\frac{c}{t^p}x(t)\right)&=0,\\
			\dot{\lambda}(t)-t^{q+s}\left(\nabla_{\lambda}\mathcal{L}(x(t)+\theta t^q\dot{x}(t), \lambda(t))-\frac{c}{t^p}\lambda(t)\right)&=0,
		\end{cases}
	\end{eqnarray}
	where $t\geq t_0>0$, $0\leq q<1$,  $0<p<1$, $c>0$, $\alpha>0$, $\theta>0$ and $s$ is a constant.  This inertial primal-dual dynamic is more general than system \eqref{z2}. Indeed, when $q=0$ and $s=p$, system \eqref{z2} becomes system \eqref{z21} considered by  Chbani et al. \cite{ChbaniRBOn(2024)}. Under mild conditions on the parameters $q$, $p$ and $s$, we shall establish the convergence rate results for the primal-dual gap, the objective residual and the feasibility violation, and the strong convergence of the trajectory $(x(t),\lambda(t))$ of \eqref{z2} to the minimum norm element $(x^*,\lambda^*)$ of the primal-dual optimal solution set of problem \eqref{z1}. Let us emphasize that we develop two new lemmas (Lemma  \ref{lemma2.2} and Lemma  \ref{lemma2.3})  to establish the convergence rates of the objective residual and the feasibility violation. In terms of Lemma  \ref{lemma2.2} and Lemma  \ref{lemma2.3}, we fix the problem that  \cite[Lemma 6]{HeHFiietal(2022)} cannot be applied in the proof of \cite[Theorem 3.2]{ChbaniRBOn(2024)}. Our main contributions are summarized as follows:

 {\bf (a)}. We  propose an inertial second-order plus first-order primal-dual dynamical system, controlled by a  Tikhonov regularization term, with a slow damping   $\frac{\alpha}{t^q}$, where $0\le q<1$, for  the linearly constrained convex optimization problem  \eqref{z1}. Our dynamic  \eqref{z2} is very general and includes the recent  Tikhonov regularized inertial dynamic considered by  Chbani et al. \cite{ChbaniRBOn(2024)} as a special case.  Compared to the inertial primal-dual dynamics considered in \cite{zhuhufang1,zhuhufang2}, the proposed dynamic \eqref{z2} involves  Tikhonov regularization terms  for both the primal and dual variables.

{\bf (b) } Under mild conditions on the underlying parameters, we prove  the strong convergence  of the trajectory of \eqref{z2}  to the minimum norm element of the primal-dual optimal solution set of problem \eqref{z1}, along with  the convergence rate results for the primal-dual gap, the objective residual and the feasibility violation. Let us emphasize that  the proofs of the objective residual and the feasibility violation are based on two new  developed lemmas (Lemma  \ref{lemma2.2} and Lemma  \ref{lemma2.3}) which fix the problem that  \cite[Lemma 6]{HeHFiietal(2022)} cannot be applied in the proof of \cite[Theorem 3.2]{ChbaniRBOn(2024)}. Let us also emphasize that  our strong convergence result is $\lim_{t\rightarrow+\infty}\|(x(t), \lambda(t))-(x^*,\lambda^*)\|=0$ instead of  the result  $\lim\inf_{t\rightarrow+\infty}\|x(t)-x^*\|=0$ established in  \cite{zhuhufang1,zhuhufang2}, where $x(t)$ and $y(t)$ denote respectively the primal and dual trajectories of the primal-dual dynamical system under consideration, and $(x^*,\lambda^*)$ denotes the minimal norm  primal-dual solution.

The rest of this paper is organized as follows: In Section 2, we provide some preliminary results which will be used in convergence analysis. In Section 3,  we investigate the convergence properties of the primal-dual gap, the objective function value and the feasibility violation, and the strong convergence of the primal-dual trajectory generated by system \eqref{z2}. Finally,  we perform  in Section 4 some numerical experiments to illustrate our theoretical findings.
	
	\section{Preliminary results}

Throughout this paper, we will make the following standard assumption on the parameters  and functions in  problem \eqref{z1} and system \eqref{z2}:

\begin{assumption}\label{AS-F}
	Suppose that  $f: \mathcal{X}\rightarrow \mathbb{R}$ is a continuously differentiable convex function, $A: \mathcal{X}\rightarrow \mathcal{Y}$ is a continuous linear operator,  the primal-dual solution set  $\Omega$ of problem \eqref{z1} is nonempty, and

 $$\alpha>0,\quad \theta>\frac{1}{\alpha},\quad 0\leq q<1,\quad 0<p<1-q,\quad c>0.$$
\end{assumption}

	Recall that  the Lagrangian function $\mathcal{L}: \mathcal{X}\times\mathcal{Y}\rightarrow\mathbb{R}$ of problem \eqref{z1} is defined by
	\begin{eqnarray*}
		\mathcal{L}(x,\lambda)=f(x)+\langle \lambda, Ax-b\rangle.
	\end{eqnarray*}
and that  $(\hat{x},\hat{\lambda})\in  \mathcal{X}\times\mathcal{Y}$ is called a saddle point of $\mathcal{L}$  if and only if 
	$$\mathcal{L}(\hat{x}, \lambda)\leq \mathcal{L}(\hat{x}, \hat{\lambda})\leq\mathcal{L}(x, \hat{\lambda}), \quad \forall (x,\lambda)\in\mathcal{X}\times\mathcal{Y}.$$
The  saddle point set of $\mathcal{L}$ is denote by $\Omega$. It is well-known that  $(\hat{x},\hat{\lambda})\in\Omega$ if and only if
	\begin{eqnarray}\label{zc1}
		\begin{cases}
			\nabla f(\hat{x})+A^T\hat{\lambda}=0,\\
			A\hat{x}-b=0.
		\end{cases}
	\end{eqnarray}
A pair  $(\hat{x},\hat{\lambda})\in\Omega$ is also called a primal-dual solution of  problem \eqref{z1}.  
	
	Define $\mathcal{L}_{t} : \mathcal{X}\times\mathcal{Y}\rightarrow\mathbb{R}$  by
	\begin{eqnarray}\label{z3}
		\mathcal{L}_{t}(x,\lambda)&=&\mathcal{L}(x,\lambda)+\frac{c}{2t^p}\left(\|x\|^2-\|\lambda\|^2\right)\\
		&=&f(x)+\langle \lambda, Ax-b\rangle+\frac{c}{2t^p}\left(\|x\|^2-\|\lambda\|^2\right).\nonumber
	\end{eqnarray}
Clearly, $\mathcal{L}_{t}(\cdot,\lambda)$ is $\frac{c}{t^p}$-strongly convex and    $\mathcal{L}_{t}(x,\cdot)$ is $\frac{c}{t^p}$-strongly concave for every $(x,\lambda)\in  \mathcal{X}\times\mathcal{Y}$. Consequently, $\mathcal{L}_{t}$ has a unique saddle point. Set 
	$$(x_t,\lambda_t) :=\arg\min\max_{\mathcal{X}\times\mathcal{Y}}\mathcal{L}_{t}(x, \lambda).$$
	Then,
	\begin{eqnarray}\label{zv1}
		\mathcal{L}_{t}(x_t, \lambda)\leq\mathcal{L}_{t}(x_t, \lambda_t)\leq\mathcal{L}_{t}(x, \lambda_t), \quad \forall (x, \lambda)\in\mathcal{X}\times\mathcal{Y}.
	\end{eqnarray}
Using the first-order optimality condition, we have
	\begin{eqnarray}\label{z4}
		\begin{cases}
			0&=\nabla_{x}\mathcal{L}_{t}(x_t, \lambda_t)=\nabla f(x_t)+A^T\lambda_t+\frac{c}{t^p}x_t,\\
			0&=\nabla_{\lambda}\mathcal{L}_{t}(x_t, \lambda_t)=Ax_t-b-\frac{c}{t^p}\lambda_t.
		\end{cases}
	\end{eqnarray} 

The following lemmas play  crucial roles in establishing  convergence results.

	\begin{lemma}\label{lemma2.1}\cite[Lemma 2.3]{ChbaniRBOn(2024)}
		Let $(x^*,\lambda^*)$ be the minimum norm element of the primal-dual optimal solution set $\Omega$ of problem \eqref{z1}.  Then, it holds:
		\begin{itemize}
			\item[(i)] $\lim_{t\rightarrow+\infty}\|(x_t,\lambda_t)-(x^*,\lambda^*)\|=0$ and $\|(x_t,\lambda_t)\|\leq \|(x^*,\lambda^*)\|$  for all $t\geq t_0$.
			\item[(ii)] $\|(\dot{x}_t, \dot{\lambda}_t)\|\leq\frac{p}{t}\|(x_t,\lambda_t)\|\leq\frac{p}{t}\|(x^*,\lambda^*)\|$  for all $t\geq t_0$.
		\end{itemize}
	\end{lemma}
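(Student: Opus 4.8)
The plan is to recast the optimality system \eqref{z4} as a Tikhonov (Browder) regularization of a single maximal monotone operator and then run the classical regularization argument. Writing $z=(x,\lambda)$ and defining the KKT operator $T(x,\lambda):=\left(\nabla f(x)+A^{T}\lambda,\,b-Ax\right)$, the coupling terms cancel, so $\langle T(z_1)-T(z_2),z_1-z_2\rangle=\langle\nabla f(x_1)-\nabla f(x_2),x_1-x_2\rangle\ge0$; since $f$ is convex and $C^1$, $\nabla f$ is maximal monotone and hence $T$ is maximal monotone with $T^{-1}(0)=\Omega$. With $\mu_t:=c/t^{p}$, which decreases to $0$ because $0<p<1$, the system \eqref{z4} reads exactly $T(z_t)+\mu_t z_t=0$, i.e. $z_t=(\mathrm{Id}+\mu_t^{-1}T)^{-1}(0)$.

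For (i), first I would test monotonicity against an arbitrary $\hat z\in\Omega$: from $T(\hat z)=0$ and $T(z_t)=-\mu_t z_t$ one gets $\langle z_t,z_t-\hat z\rangle\le0$, whence $\|z_t\|^2\le\langle z_t,\hat z\rangle\le\|z_t\|\,\|\hat z\|$ and therefore $\|z_t\|\le\|\hat z\|$ for every $\hat z\in\Omega$; choosing $\hat z=(x^*,\lambda^*)$, the projection of the origin onto the closed convex set $\Omega$, yields $\|(x_t,\lambda_t)\|\le\|(x^*,\lambda^*)\|$ and, in particular, boundedness of the path. Next I would take any weak cluster point $\bar z$ of $z_t$ as $t\to+\infty$; passing to the limit in the monotonicity inequality $\langle -\mu_t z_t-w,\,z_t-u\rangle\ge0$ for $(u,w)\in\mathrm{graph}(T)$, using that $\mu_t z_t\to0$ strongly and $z_t\rightharpoonup\bar z$, gives $\langle w,u-\bar z\rangle\ge0$ for all such $(u,w)$, so maximal monotonicity forces $0=T(\bar z)$, i.e. $\bar z\in\Omega$. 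Then $\|z_t\|^2\le\langle z_t,\bar z\rangle\to\|\bar z\|^2$ gives $\limsup\|z_t\|\le\|\bar z\|$, which with weak lower semicontinuity of the norm yields $\|z_t\|\to\|\bar z\|$; in a Hilbert space weak convergence plus norm convergence is strong convergence, so $z_t\to\bar z$ strongly along the subsequence. Finally $\|\bar z\|=\lim\|z_t\|\le\|z^*\|$ together with $\bar z\in\Omega$ forces $\bar z=z^*$ by uniqueness of the minimal-norm element, and since every cluster point equals $z^*$ the whole path converges strongly.

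For (ii), I would quantify the dependence of the path on the parameter. Taking two values $\mu,\mu'$, subtracting the relations $T(z(\mu))=-\mu z(\mu)$ and $T(z(\mu'))=-\mu' z(\mu')$ and using monotonicity gives $\langle \mu z(\mu)-\mu' z(\mu'),z(\mu)-z(\mu')\rangle\le0$; splitting $\mu z(\mu)-\mu' z(\mu')=\mu\left(z(\mu)-z(\mu')\right)+(\mu-\mu')z(\mu')$ and applying Cauchy--Schwarz yields the Lipschitz-type estimate $\|z(\mu)-z(\mu')\|\le\frac{|\mu-\mu'|}{\mu}\|z(\mu')\|$. Dividing by $|\mu-\mu'|$ and letting $\mu'\to\mu$ gives $\|z'(\mu)\|\le\mu^{-1}\|z(\mu)\|$, i.e. $\mu\|z'(\mu)\|\le\|z(\mu)\|$. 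Since $\mu_t=c/t^{p}$ satisfies $\dot\mu_t=-\tfrac{p}{t}\mu_t$, the chain rule $(\dot x_t,\dot\lambda_t)=z'(\mu_t)\dot\mu_t$ gives $\|(\dot x_t,\dot\lambda_t)\|=|\dot\mu_t|\,\|z'(\mu_t)\|\le\frac{p}{t}\mu_t\cdot\mu_t^{-1}\|z_t\|=\frac{p}{t}\|(x_t,\lambda_t)\|$, and combining with the bound from (i) closes the chain of inequalities.

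The main obstacle I anticipate is not the bound itself but justifying the differentiability of the regularization path $t\mapsto(x_t,\lambda_t)$, equivalently $\mu\mapsto z(\mu)$, under the sole assumption $f\in C^1$: the monotonicity estimate only delivers local Lipschitz continuity, hence differentiability almost everywhere by Rademacher's theorem, with the stated bound holding at every point of differentiability. Promoting this to everywhere differentiability, or at least securing a bound on the a.e.-derivative that suffices for the subsequent Lyapunov analysis, is the delicate technical point; if $f$ were $C^2$ one could instead differentiate \eqref{z4} directly via the implicit function theorem, the strong monotonicity of $T+\mu\,\mathrm{Id}$ guaranteeing invertibility of the linearized system and the same a priori estimate.
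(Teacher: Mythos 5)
There is nothing in this paper to compare your argument against: Lemma \ref{lemma2.1} is imported verbatim, with citation, from \cite[Lemma 2.3]{ChbaniRBOn(2024)}, and no proof is given here. Judged on its own merits, your proof of part (i) is the classical Browder--Tikhonov path argument for maximal monotone operators, and it is correct and complete. The KKT operator $T(x,\lambda)=\left(\nabla f(x)+A^{T}\lambda,\,b-Ax\right)$ is monotone because the skew part cancels, and its maximal monotonicity follows most directly from the fact that it is a continuous monotone operator defined on all of $\mathcal{X}\times\mathcal{Y}$ (Minty's theorem) --- you do not even need the sum-of-operators reasoning you sketch. With $T^{-1}(0)=\Omega$ from \eqref{zc1}, the optimality system \eqref{z4} is exactly $T(z_t)+\mu_t z_t=0$, $\mu_t=c/t^{p}$, and your chain ``$\|z_t\|\le\|\hat z\|$ for every $\hat z\in\Omega$; weak cluster points lie in $\Omega$ by maximality; norm convergence upgrades weak to strong; minimality of $z^*$ identifies the limit'' is exactly the standard proof, with each step justified.

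The one genuine issue is the one you flagged yourself in part (ii): under the paper's standing assumption that $f$ is merely continuously differentiable, the path cannot be shown to be differentiable at \emph{every} $t\ge t_0$, and indeed it need not be. Take $A=0$, $b=0$ and $f(x)=\frac{1}{2}(x-2)^2+\frac{1}{2}\max(x-1,0)^2$ on $\mathbb{R}$, which is convex and $C^{1}$; the path solving $\nabla f(x_\mu)+\mu x_\mu=0$ is $x_\mu=3/(2+\mu)$ for $\mu<1$ and $x_\mu=2/(1+\mu)$ for $\mu\ge 1$, whose one-sided derivatives at $\mu=1$ are $-1/3$ and $-1/2$, so $t\mapsto x_{\mu_t}$ has a kink at $t=c^{1/p}$, which exceeds $t_0$ for suitable $c$. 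So the lemma's ``for all $t\ge t_0$'' in (ii) is an overstatement of what the hypotheses can deliver, and your resolution is the correct and best possible one: the estimate $\|z(\mu)-z(\mu')\|\le\frac{|\mu-\mu'|}{\mu}\|z(\mu')\|$ gives local Lipschitz continuity of $t\mapsto z_t$, hence (Hilbert spaces having the Radon--Nikodym property) differentiability for a.e.\ $t$, with $\|(\dot{x}_t,\dot{\lambda}_t)\|\le\frac{p}{t}\|(x_t,\lambda_t)\|$ at every point of differentiability. This a.e.\ version is all the paper ever uses: in Lemma \ref{lemma3.1} and Theorems \ref{ztt3.1}--\ref{ztt3.2}, $\dot{x}_t$ and $\dot{\lambda}_t$ enter only through differential inequalities that are subsequently integrated, and since $t\mapsto z_t$ is locally absolutely continuous, that integration is unaffected. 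In short: your (i) closes the statement completely, and the residual ``gap'' in (ii) is a defect of the lemma as transcribed (it should read ``for a.e.\ $t\ge t_0$'', or assume $\nabla f$ smooth enough for the implicit-function-theorem route you mention), not of your argument.
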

	
	\begin{lemma}\label{lemma012}\cite[Lemma 2.4]{ChbaniRBOn(2024)}
		For any $t\geq t_0$, it holds
		\begin{eqnarray*}
			\frac{d}{dt}\mathcal{L}_{t}(x_t, \lambda_t)=\frac{cp}{2t^{p+1}}\left(\|\lambda_t\|^2-\|x_t\|^2\right).
		\end{eqnarray*}
	\end{lemma}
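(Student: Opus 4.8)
The plan is to differentiate the scalar map $t\mapsto\mathcal{L}_{t}(x_t,\lambda_t)$ by the chain rule and to exploit the fact that $(x_t,\lambda_t)$ is the saddle point of $\mathcal{L}_t$, hence satisfies the first-order optimality conditions \eqref{z4}. This is an envelope-theorem (Danskin-type) argument: the contribution coming from the implicit dependence of the saddle point on $t$ will cancel, and only the explicit $t$-dependence carried by the Tikhonov coefficient $\frac{c}{2t^p}$ survives.

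First I would record that $t\mapsto(x_t,\lambda_t)$ is differentiable, so that $(\dot{x}_t,\dot{\lambda}_t)$ is well-defined; this is already implicit in the statement of Lemma~\ref{lemma2.1}(ii), and follows by applying the implicit function theorem to the smooth system \eqref{z4}, whose Jacobian is invertible because $\mathcal{L}_{t}(\cdot,\lambda)$ is $\frac{c}{t^p}$-strongly convex while $\mathcal{L}_{t}(x,\cdot)$ is $\frac{c}{t^p}$-strongly concave. Writing $\mathcal{L}_{t}(x,\lambda)$ as in \eqref{z3}, I would split the total derivative into the explicit $t$-dependence carried by $\frac{c}{2t^p}$ and the implicit $t$-dependence carried by $x_t$ and $\lambda_t$:
\begin{eqnarray*}
\frac{d}{dt}\mathcal{L}_{t}(x_t,\lambda_t)=\frac{c}{2}\frac{d}{dt}\!\left(t^{-p}\right)\!\left(\|x_t\|^2-\|\lambda_t\|^2\right)+\langle\nabla_x\mathcal{L}_{t}(x_t,\lambda_t),\dot{x}_t\rangle+\langle\nabla_{\lambda}\mathcal{L}_{t}(x_t,\lambda_t),\dot{\lambda}_t\rangle.
\end{eqnarray*}

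The crucial step is that the last two (implicit) terms vanish: by \eqref{z4} we have $\nabla_x\mathcal{L}_{t}(x_t,\lambda_t)=0$ and $\nabla_{\lambda}\mathcal{L}_{t}(x_t,\lambda_t)=0$, so both inner products are zero irrespective of the actual values of $\dot{x}_t$ and $\dot{\lambda}_t$. Computing the remaining explicit derivative through $\frac{d}{dt}(t^{-p})=-p\,t^{-p-1}$ then leaves
\begin{eqnarray*}
\frac{d}{dt}\mathcal{L}_{t}(x_t,\lambda_t)=-\frac{cp}{2t^{p+1}}\left(\|x_t\|^2-\|\lambda_t\|^2\right)=\frac{cp}{2t^{p+1}}\left(\|\lambda_t\|^2-\|x_t\|^2\right),
\end{eqnarray*}
which is exactly the claimed identity.

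The main obstacle is the rigorous justification of the differentiability of the saddle-point map $t\mapsto(x_t,\lambda_t)$ and of the attendant interchange of the total derivative with the inner products, since once this regularity is available (via the implicit function theorem, or simply quoted from the construction underlying Lemma~\ref{lemma2.1}) the rest is just the envelope-theorem cancellation followed by an elementary differentiation. I therefore expect the proof to be short, with all the substance residing in the vanishing of the gradient terms at the saddle point.
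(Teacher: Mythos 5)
Your proof is correct, but note that the paper does not prove this lemma at all: it is imported verbatim as \cite[Lemma 2.4]{ChbaniRBOn(2024)}, so there is no in-paper argument to compare against. Your envelope-theorem (Danskin-type) computation --- splitting the total derivative into the explicit $t$-derivative of the Tikhonov term $\frac{c}{2t^p}\left(\|x\|^2-\|\lambda\|^2\right)$ plus the inner products against $\nabla_x\mathcal{L}_{t}(x_t,\lambda_t)$ and $\nabla_{\lambda}\mathcal{L}_{t}(x_t,\lambda_t)$, which vanish by \eqref{z4} --- is the standard argument and almost surely the one used in the cited source; it is also consistent with how the paper manipulates the same quantity later (see the derivation of \eqref{abfb} inside the proof of Lemma \ref{lemma3.1}, which invokes exactly this cancellation). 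One caveat: your justification of the differentiability of $t\mapsto(x_t,\lambda_t)$ via the implicit function theorem applied to \eqref{z4} requires $\nabla f$ itself to be continuously differentiable, whereas Assumption \ref{AS-F} only assumes $f$ is $C^1$; strictly speaking the IFT route is therefore not available under the paper's hypotheses. Your fallback --- quoting this regularity from the construction underlying Lemma \ref{lemma2.1}(ii), which is likewise imported from \cite{ChbaniRBOn(2024)} and already presupposes that $(\dot{x}_t,\dot{\lambda}_t)$ exists --- is the right move, and with that regularity granted the remainder of your argument is airtight.
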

	
The following lemma generalizes  \cite[Lemma 6]{HeHFiietal(2022)}. 
	\begin{lemma}\label{lemma2.2}
		 Let $\delta>0$,  $\mu\geq0$ and $\nu\geq0$. Suppose that  $g:[\delta,+\infty)\rightarrow \mathcal{X}$ and $a : [\delta,+\infty)\rightarrow[0,+\infty)$ are two continuously differentiable functions.  If there exists a constant $C\geq0$ such that
		\begin{eqnarray}\label{zfe2}
			\left\|g(t)+e^{-\mu t^{\nu}}\int_{\delta}^{t}a(\tau)g(\tau)d\tau\right\|\leq C, \quad \forall t\geq \delta,
		\end{eqnarray}
		then
		$$\sup_{t\geq \delta}\|g(t)\|<+\infty.$$
	\end{lemma}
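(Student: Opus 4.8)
The plan is to reduce the claim to a linear first-order differential equation for the vector-valued integral and then apply an integrating-factor estimate. Set $G(t):=\int_\delta^t a(\tau)g(\tau)\,d\tau$ and $F(t):=g(t)+e^{-\mu t^\nu}G(t)$, so that the hypothesis \eqref{zfe2} reads $\|F(t)\|\le C$ for all $t\ge\delta$. Since $\|g(t)\|\le\|F(t)\|+\|e^{-\mu t^\nu}G(t)\|$, it suffices to bound $e^{-\mu t^\nu}G(t)$ uniformly in $t$. Because $g$ is continuous, $G\in C^1$ with $\dot G(t)=a(t)g(t)$, and substituting $g(t)=F(t)-e^{-\mu t^\nu}G(t)$ yields the linear equation
\[
\dot G(t)+a(t)e^{-\mu t^\nu}G(t)=a(t)F(t),\qquad G(\delta)=0,
\]
in which the coefficient $a(t)e^{-\mu t^\nu}$ and the forcing term are controlled solely through $\|F\|\le C$.

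Next I would integrate this equation with the scalar integrating factor $e^{B(t)}$, where $B(t):=\int_\delta^t a(\tau)e^{-\mu\tau^\nu}\,d\tau$ (so $B(\delta)=0$ and $\dot B=a e^{-\mu t^\nu}$). Since $\frac{d}{dt}\big(e^{B(t)}G(t)\big)=e^{B(t)}a(t)F(t)$, integrating from $\delta$ to $t$ and using $G(\delta)=0$ gives $e^{B(t)}G(t)=\int_\delta^t e^{B(\tau)}a(\tau)F(\tau)\,d\tau$; the scalarity of the factor makes this manipulation legitimate for the $\mathcal{X}$-valued integral. Taking norms and using $\|F\|\le C$ and $a\ge 0$,
\[
\big\|e^{-\mu t^\nu}G(t)\big\|\le Ce^{-\mu t^\nu}e^{-B(t)}\int_\delta^t e^{B(\tau)}a(\tau)\,d\tau.
\]

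The decisive step, and the one I expect to be the crux, is estimating $\int_\delta^t e^{B(\tau)}a(\tau)\,d\tau$ against $e^{\mu t^\nu}e^{B(t)}$; this is precisely the point where the weight $e^{-\mu t^\nu}$ depends on the upper limit $t$ rather than on the integration variable, the very feature that made \cite[Lemma 6]{HeHFiietal(2022)} inapplicable in the argument of \cite{ChbaniRBOn(2024)}. I would resolve it by writing $a(\tau)=\dot B(\tau)e^{\mu\tau^\nu}$ and exploiting the monotonicity of $\tau\mapsto e^{\mu\tau^\nu}$ (valid since $\mu,\nu\ge 0$ and $\tau>0$), so that $e^{\mu\tau^\nu}\le e^{\mu t^\nu}$ for $\delta\le\tau\le t$ and
\[
\int_\delta^t e^{B(\tau)}a(\tau)\,d\tau=\int_\delta^t e^{B(\tau)}\dot B(\tau)e^{\mu\tau^\nu}\,d\tau\le e^{\mu t^\nu}\big(e^{B(t)}-1\big)\le e^{\mu t^\nu}e^{B(t)}.
\]
Plugging this in cancels both exponentials and yields $\|e^{-\mu t^\nu}G(t)\|\le C$, whence $\|g(t)\|\le 2C$ uniformly in $t\ge\delta$, which is the desired conclusion. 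The only routine points to verify along the way are that the vector-valued integrals and the product-rule differentiation of $e^{B(t)}G(t)$ are justified, both of which follow from the assumed continuity (indeed $C^1$ regularity) of $a$ and $g$.
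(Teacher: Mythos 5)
Your proof is correct and is essentially the same argument as the paper's: both hinge on the integrating factor $e^{B(t)}$ with $B(t)=\int_\delta^t a(\tau)e^{-\mu\tau^\nu}d\tau$ (the paper simply builds it into the definition of its auxiliary function $G$), both reduce the problem to bounding $\int_\delta^t a(\tau)e^{B(\tau)}d\tau$ by $e^{\mu t^\nu}e^{B(t)}$, and both arrive at the uniform bound $\|g(t)\|\le 2C$. Your use of monotonicity of $\tau\mapsto e^{\mu\tau^\nu}$ to get that integral bound is a cosmetic variant of the paper's differential inequality $\frac{d}{dw}\left(e^{\mu w^{\nu}}e^{B(w)}\right)\ge a(w)e^{B(w)}$; the two computations are interchangeable.
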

	
	\begin{proof}
		Let's define $G : [\delta, +\infty)\rightarrow  \mathcal{X}$ by 
		\begin{eqnarray}\label{zfe1}
			G(t)=e^{\int_{\delta}^{t}a(\tau)e^{-\mu\tau^{\nu}}d\tau}\int_{\delta}^{t}a(\tau)g(\tau)d\tau.
		\end{eqnarray}
		Combining \eqref{zfe2} and \eqref{zfe1}, we get
		\begin{eqnarray*}
			\left\|\dot{G}(t)\right\|&=&\left\|a(t)e^{\int_{\delta}^{t}a(\tau)e^{-\mu\tau^{\nu}}d\tau}e^{-\mu t^{\nu}}\int_{\delta}^{t}a(\tau)g(\tau)d\tau+a(t)g(t)e^{\int_{\delta}^{t}a(\tau)e^{-\mu\tau^{\nu}}d\tau}\right\|\\
			&=&a(t)e^{\int_{\delta}^{t}a(\tau)e^{-\mu\tau^{\nu}}d\tau}\left\|g(t)+e^{-\mu t^{\nu}}\int_{\delta}^{t}a(\tau)g(\tau)d\tau\right\|\\
			&\leq&Ca(t)e^{\int_{\delta}^{t}a(\tau)e^{-\mu\tau^{\nu}}d\tau}
		\end{eqnarray*}
		 for all $t\geq \delta$.
		Observe that  $G(\delta)=0$. It follows that
		\begin{eqnarray}\label{abf1}
			\left\|G(t)\right\|=\left\|\int_{\delta}^{t}\dot{G}(w)dw\right\|\leq\int_{\delta}^{t}\left\|\dot{G}(w)\right\|dw\leq C\int_{\delta}^{t}a(w)e^{\int_{\delta}^{w}a(\tau)e^{-\mu\tau^{\nu}}d\tau}dw.
		\end{eqnarray}
		Since
		\begin{eqnarray*}
			\frac{d}{dw}\left(e^{\mu w^{\nu}}e^{\int_{\delta}^{w}a(\tau)e^{-\mu\tau^{\nu}}d\tau}\right)&=&\mu \nu w^{\nu-1}e^{\mu w^{\nu}}e^{\int_{\delta}^{w}a(\tau)e^{-\mu\tau^{\nu}}d\tau}+a(w)e^{\int_{\delta}^{w}a(\tau)e^{-\mu\tau^{\nu}}d\tau}\\
			&\geq&a(w)e^{\int_{\delta}^{w}a(\tau)e^{-\mu\tau^{\nu}}d\tau},
		\end{eqnarray*}
		we get
		\begin{eqnarray*}
			\int_{\delta}^{t}a(w)e^{\int_{\delta}^{w}a(\tau)e^{-\mu\tau^{\nu}}d\tau}dw\leq\int_{\delta}^{t}d\left(e^{\mu w^{\nu}}e^{\int_{\delta}^{w}a(\tau)e^{-\mu\tau^{\nu}}d\tau}\right)=e^{\mu t^{\nu}}e^{\int_{\delta}^{t}a(\tau)e^{-\mu\tau^{\nu}}d\tau}-e^{\mu\delta^{\nu}}.
		\end{eqnarray*}
This together with \eqref{abf1} yields
		\begin{eqnarray*}
			\left\|G(t)\right\|\leq Ce^{\mu t^{\nu}}e^{\int_{\delta}^{t}a(\tau)e^{-\mu\tau^{\nu}}d\tau}-Ce^{\mu\delta^{\nu}},\quad\forall t\ge \delta.
		\end{eqnarray*}
Using \eqref{zfe1}, we have 
		\begin{eqnarray*}
			e^{-\mu t^{\nu}}\left\|\int_{\delta}^{t}a(\tau)g(\tau)d\tau\right\|\leq C-\frac{Ce^{\mu\delta^{\nu}}}{e^{\mu t^{\nu}}e^{\int_{\delta}^{t}a(\tau)e^{-\mu\tau^{\nu}}d\tau}}\leq C, \quad\forall t\ge \delta,
		\end{eqnarray*}
		which together with \eqref{zfe2} implies 
		$$\left\|g(t)\right\|\leq C+e^{-\mu t^{\nu}}\left\|\int_{\delta}^{t}a(\tau)g(\tau)d\tau\right\|\leq 2C<+\infty, \quad \forall t\geq \delta.$$
		\end{proof}

\begin{remark}  
When  $\mu=0$,  Lemma  \ref{lemma2.2} reduces to   \cite[Lemma 6]{HeHFiietal(2022)}.
\end{remark}

	\begin{lemma}\label{lemma2.3}
		 Let $\delta>0$,  $\mu\geq0$ and $\nu\geq0$. Suppose that  $g:[\delta,+\infty)\rightarrow  \mathcal{X}$ and  $a : [\delta,+\infty)\rightarrow (-\infty, 0]$  are two continuously differentiable functions. If  there exist constants  $C_0\in (-1, 0)$ and  $\widehat{C}\geq0$ such that
		\begin{eqnarray}\label{zfe4}
			e^{-\mu t^{\nu}}\int_{\delta}^{t}a(\tau)d\tau\geq C_0, \quad \forall t\geq \delta
		\end{eqnarray}
		and
		\begin{eqnarray}\label{zfe3}
			\left\|g(t)+e^{-\mu t^{\nu}}\int_{\delta}^{t}a(\tau)g(\tau)d\tau\right\|\leq \widehat{C}, \quad \forall t\geq \delta,
		\end{eqnarray}
		then
		$$\sup_{t\geq \delta}\|g(t)\|<+\infty.$$
	\end{lemma}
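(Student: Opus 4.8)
The plan is to avoid the integrating-factor function $G$ used in Lemma \ref{lemma2.2}: since $a$ is now nonpositive, the factor $e^{\int_{\delta}^{t}a(\tau)e^{-\mu\tau^{\nu}}d\tau}$ decays, so the argument of Lemma \ref{lemma2.2} no longer controls the growth of $\int_{\delta}^{t}a(\tau)g(\tau)d\tau$. Instead I would run a direct a priori estimate on the running supremum $m(t):=\sup_{\delta\le \tau\le t}\|g(\tau)\|$, converting \eqref{zfe3} and \eqref{zfe4} into a contraction inequality for $m(t)$ whose contraction constant is $-C_0\in(0,1)$.

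First I would rewrite \eqref{zfe3} as $g(t)=h(t)-e^{-\mu t^{\nu}}\int_{\delta}^{t}a(\tau)g(\tau)d\tau$ with $\|h(t)\|\le \widehat{C}$, and apply the triangle inequality together with $|a|=-a$ (valid since $a\le 0$) to obtain
\[
\|g(t)\|\le \widehat{C}-e^{-\mu t^{\nu}}\int_{\delta}^{t}a(\tau)\|g(\tau)\|\,d\tau,\qquad \forall\, t\ge \delta.
\]
Note that $m(t)$ is finite on every bounded interval (by continuity of $g$) and nondecreasing in $t$. Fixing $t$ and taking any $r\in[\delta,t]$, I would use $\|g(\tau)\|\le m(t)$ for $\tau\le r$; since the weight $-a(\tau)\ge 0$, replacing $\|g(\tau)\|$ by $m(t)$ enlarges the right-hand side and lets me pull $m(t)$ out, giving
\[
\|g(r)\|\le \widehat{C}-m(t)\,e^{-\mu r^{\nu}}\int_{\delta}^{r}a(\tau)\,d\tau.
\]
By \eqref{zfe4}, $e^{-\mu r^{\nu}}\int_{\delta}^{r}a(\tau)d\tau\ge C_0$, and since $m(t)\ge 0$ this yields $\|g(r)\|\le \widehat{C}-C_0\,m(t)$ for every $r\in[\delta,t]$.

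Taking the supremum over $r\in[\delta,t]$ then gives $m(t)\le \widehat{C}-C_0\,m(t)$, i.e. $(1+C_0)m(t)\le \widehat{C}$. Because $C_0\in(-1,0)$ we have $1+C_0>0$, hence $m(t)\le \widehat{C}/(1+C_0)$, a bound independent of $t$; letting $t\to +\infty$ yields $\sup_{t\ge \delta}\|g(t)\|\le \widehat{C}/(1+C_0)<+\infty$.

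The main obstacle is bookkeeping the inequality directions forced by $a\le 0$: one must combine $\|g(\tau)\|\le m(t)$ with the sign of $a$ so that substituting $m(t)$ weakens the (already nonpositive) integrand in the correct direction, and then use monotonicity of $m$ to absorb $m(r)\le m(t)$ before taking the supremum. The decisive quantitative ingredient is that \eqref{zfe4} supplies a constant strictly above $-1$: this is exactly what makes $1+C_0>0$ and turns the estimate into a genuine contraction, so that $m(t)$ can be solved for. Without the strict lower bound $C_0>-1$ the final algebraic step would break down.
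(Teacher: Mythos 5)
Your proof is correct, and it takes a genuinely different route from the paper's. The paper proves this lemma the same way it proves Lemma \ref{lemma2.2}: it introduces the auxiliary function $G(t)=e^{\int_{\delta}^{t}a(\tau)e^{-\mu\tau^{\nu}}d\tau}\int_{\delta}^{t}a(\tau)g(\tau)d\tau$, bounds $\|\dot{G}(t)\|$ using \eqref{zfe3}, integrates, and then uses \eqref{zfe4} to compare $\int_{\delta}^{t}a(w)e^{\int_{\delta}^{w}a(\tau)e^{-\mu\tau^{\nu}}d\tau}dw$ with the derivative of $-\int_{\delta}^{w}a(\tau)d\tau\, e^{\int_{\delta}^{w}a(\tau)e^{-\mu\tau^{\nu}}d\tau}$, which is where the factor $\frac{1}{1+C_0}$ enters; the conclusion is the bound $\|g(t)\|\leq-\frac{\widehat{C}C_0}{1+C_0}+\widehat{C}=\frac{\widehat{C}}{1+C_0}$. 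You bypass the integrating factor entirely: the triangle inequality turns \eqref{zfe3} into $\|g(t)\|\leq\widehat{C}-e^{-\mu t^{\nu}}\int_{\delta}^{t}a(\tau)\|g(\tau)\|d\tau$, and then the running supremum $m(t)=\sup_{\delta\leq\tau\leq t}\|g(\tau)\|$ (finite by continuity on compact intervals) satisfies the contraction inequality $m(t)\leq\widehat{C}-C_0\,m(t)$, where $-C_0\in(0,1)$ is exactly the contraction constant supplied by \eqref{zfe4}; solving gives $m(t)\leq\frac{\widehat{C}}{1+C_0}$, the same explicit bound as the paper's. Each step is sound: the sign bookkeeping ($-a\geq 0$, $m(t)\geq 0$) is handled correctly, and the final algebraic rearrangement is legitimate because $m(t)$ is finite and $1+C_0>0$. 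What your approach buys is economy of hypotheses and transparency: you never differentiate anything, so continuity of $a$ and $g$ (rather than continuous differentiability) would suffice, and the role of the assumption $C_0>-1$ is laid bare as the condition making the fixed-point inequality solvable, whereas in the paper it appears somewhat opaquely inside a derivative comparison. The paper's method, on the other hand, is uniform with its proof of Lemma \ref{lemma2.2}, which is presumably why the authors chose it.
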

	
	\begin{proof}
		Define $G : [\delta, +\infty)\rightarrow  \mathcal{X}$ by 
		\begin{eqnarray*}
			G(t)=e^{\int_{\delta}^{t}a(\tau)e^{-\mu\tau^{\nu}}d\tau}\int_{\delta}^{t}a(\tau)g(\tau)d\tau.
		\end{eqnarray*}
		It follows from \eqref{zfe3}  that
		\begin{eqnarray*}
			\left\|\dot{G}(t)\right\|&=&\left\|a(t)e^{\int_{\delta}^{t}a(\tau)e^{-\mu\tau^{\nu}}d\tau}e^{-\mu t^{\nu}}\int_{\delta}^{t}a(\tau)g(\tau)d\tau+e^{\int_{\delta}^{t}a(\tau)e^{-\mu\tau^{\nu}}d\tau}a(t)g(t)\right\|\\
			&=&-a(t)e^{\int_{\delta}^{t}a(\tau)e^{-\mu\tau^{\nu}}d\tau}\left\|g(t)+e^{-\mu t^{\nu}}\int_{\delta}^{t}a(\tau)g(\tau)d\tau\right\|\\
			&\leq&-\widehat{C}a(t)e^{\int_{\delta}^{t}a(\tau)e^{-\mu\tau^{\nu}}d\tau}, \quad \forall t\geq \delta.
		\end{eqnarray*}
		According to the definition of $G(t)$, we have $G(\delta)=0$. Then, it holds 
		\begin{eqnarray}\label{zfe5}
			\left\|G(t)\right\|=\left\|\int_{\delta}^{t}\dot{G}(w)dw\right\|\leq\int_{\delta}^{t}\left\|\dot{G}(w)\right\|dw\leq-\widehat{C}\int_{\delta}^{t}a(w)e^{\int_{\delta}^{w}a(\tau)e^{-\mu\tau^{\nu}}d\tau}dw.
		\end{eqnarray}
		By using \eqref{zfe4} and $a(t)\leq0$, we have  
		\begin{eqnarray*}
			\frac{d}{dw}\left(-\int_{\delta}^{w}a(\tau)d\tau e^{\int_{\delta}^{w}a(\tau)e^{-\mu\tau^{\nu}}d\tau}\right)&=&
			-e^{-\mu w^{\nu}}a(w)\int_{\delta}^{w}a(\tau)d\tau e^{\int_{\delta}^{w}a(\tau)e^{-\mu\tau^{\nu}}d\tau}\\
			&&-a(w)e^{\int_{\delta}^{w}a(\tau)e^{-\mu\tau^{\nu}}d\tau}\\
			&\geq&-\left(1+C_0\right)a(w)e^{\int_{\delta}^{w}a(\tau)e^{-\mu\tau^{\nu}}d\tau}, \quad \forall w\geq \delta.
		\end{eqnarray*}
		%Since $\delta>0$, $d>0$, $r>0$ and $C_0\geq-1$, we can obtain 
		%$$\left(1+\frac{C_0}{e^{dw^r}}\right)\geq1-\frac{1}{e^{dw^r}}\geq1-\frac{1}{e^{d\delta^r}}>0,\quad \forall w\geq \delta,$$
		%which means that there exists a small enough constant $B>0$ such that 
		%$$\left(1+\frac{C_0}{e^{dw^r}}\right)\geq B, \quad \forall w\geq \delta.$$
		%As a consequence, we have 
		%\begin{eqnarray*}
		%	\frac{d}{dw}\left(-\int_{\delta}^{w}a(\tau)d\tau e^{\int_{\delta}^{w}e^{-d\tau^{r}}a(\tau)d\tau}\right)
		%	&\geq&-Ba(w)e^{\int_{\delta}^{w}e^{-d\tau^{r}}a(\tau)d\tau}, \quad \forall w\geq \delta.
		%\end{eqnarray*}
		This  together with \eqref{zfe5} and  $C_0>-1$ implies that for any $t\geq \delta$, 
		\begin{eqnarray*}
			\left\|G(t)\right\|&\leq&-\frac{\widehat{C}}{1+C_0}\int_{\delta}^{t}a(\tau)d\tau e^{\int_{\delta}^{t}a(\tau)e^{-\mu\tau^{\nu}}d\tau}.
		\end{eqnarray*}
		By the definition of $G(t)$, we obtain 
		$$\left\|\int_{\delta}^{t}a(\tau)g(\tau)d\tau\right\|\leq-\frac{\widehat{C}}{1+C_0}\int_{\delta}^{t}a(\tau)d\tau.$$
		Using \eqref{zfe4} and \eqref{zfe3},  we have 
		$$\left\|g(t)\right\|\leq-\frac{\widehat{C}}{1+C_0}e^{-\mu t^{\nu}}\int_{\delta}^{t}a(\tau)d\tau+\widehat{C}\leq -\frac{\widehat{C}C_0}{1+C_0}+\widehat{C}<+\infty$$
for all $t\geq \delta$.
		Thus,
		$$\sup_{t\geq \delta}\|g(t)\|<+\infty.$$
	\end{proof}
	
\begin{remark}  
Lemma  \ref{lemma2.3}  can be viewed as a partial generalization of \cite[Lemma 3.3]{HeTLFconver(2023)}. Indeed, it has been shown in \cite[Lemma 3.3]{HeHFiietal(2022)} that    the conclusion of  Lemma  \ref{lemma2.3} with  $\mu=0$ holds without the assumption  $-1< C_0$.
\end{remark}

	\section{Convergence analysis}
In this section we shall investigate  the strong convergence of  the trajectory of system \eqref{z2} and   the convergence rates of the primal-dual gap, the objective residual and the feasibility violation. To do so, we need the following lemma.

	\begin{lemma}\label{lemma3.1}
		Assume that $\theta>\frac{1}{\alpha}$ and let $(x,\lambda): [t_{0},+\infty)\rightarrow \mathcal{X}\times\mathcal{Y}$ be a solution of \eqref{z2}. Define $\mathcal{E}:[t_0,+\infty)\to \mathbb{R}$ by
		\begin{eqnarray}\label{z8}
			\mathcal{E}(t)&=&\theta^2 t^{2q+s}\left(\mathcal{L}_{t}(x(t),\lambda_t)-\mathcal{L}_{t}(x_t,\lambda_t)\right)+\frac{1}{2}\|x(t)-x_t+\theta t^q\dot{x}(t)\|^2\nonumber\\
			&&+\frac{\alpha\theta-1-\theta q t^{q-1}}{2}\|x(t)-x_t\|^2+\frac{\theta}{2}\|\lambda(t)-\lambda_t\|^2.
		\end{eqnarray}
		Then, there exists $t_1\ge t_0$ such that
		\begin{eqnarray*}
			\dot{\mathcal{E}}(t)+\frac{K}{t^r}\mathcal{E}(t)&\leq&\theta t^{q+s}\left(\theta(2q+s)t^{q-1}-1+\theta Kt^{q-r}\right) \left(\mathcal{L}_{t}(x(t),\lambda_t)-\mathcal{L}_{t}(x_t,\lambda_t)\right)\\
			&&+\frac{1}{2}(\theta^2\|A\|^2t^{2q+s-1}+q(1-q)\theta t^{q-2}-c\theta t^{q+s-p}+\frac{(\alpha\theta-q\theta t^{q-1})t^{q+s-p}}{a_2}\\
			&&+(\alpha\theta+1-q\theta t^{q-1})Kt^{-r})\|x(t)-x_t\|^2\\
			&&+\theta t^q\left(1-\alpha\theta+\frac{1}{2a_1}+\theta qt^{q-1}+\theta Kt^{q-r}\right)\|\dot{x}(t)\|^2\\
			&&+\frac{\theta}{2}\left(\left(\frac{1}{a_3}-c\right)t^{q+s-p}+Kt^{-r}\right)\|\lambda(t)-\lambda_t\|^2\\
			&&+\frac{\theta}{2}\left(a_1 t^q+(\alpha-qt^{q-1})a_2t^{p-q-s}\right)\|\dot{x}_t\|^2+\frac{\theta}{2}\left(\theta t^{2q+s+1}+a_3t^{p-q-s}\right)\|\dot{\lambda}_t\|^2\\
			&&+\frac{cp\theta^2}{2}t^{2q+s-p-1}\left(\|x_t\|^2-\|x(t)\|^2\right) 
		\end{eqnarray*}
for all  $t\ge t_1$, where $K$, $r$, $a_1$, $a_2$ and $a_3$ are arbitrarily positive constants.
	\end{lemma}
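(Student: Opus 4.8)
The plan is to regard $\mathcal{E}$ as a Lyapunov function and differentiate it along a solution of \eqref{z2}, converting every inner product into squared norms and into the gap $G(t):=\mathcal{L}_{t}(x(t),\lambda_t)-\mathcal{L}_{t}(x_t,\lambda_t)\ge0$ via the saddle structure of $\mathcal{L}_t$. Write $\mathcal{E}=E_1+E_2+E_3+E_4$ for the four summands of \eqref{z8} in the order displayed. I would first rewrite \eqref{z2} through the regularized Lagrangian: since $\nabla_x\mathcal{L}_t(x,\lambda)=\nabla_x\mathcal{L}(x,\lambda)+\tfrac{c}{t^p}x$ and $\nabla_\lambda\mathcal{L}_t(x,\lambda)=\nabla_\lambda\mathcal{L}(x,\lambda)-\tfrac{c}{t^p}\lambda$, the system becomes $\ddot x(t)+\tfrac{\alpha}{t^q}\dot x(t)+t^s\nabla_x\mathcal{L}_t(x(t),\lambda(t))=0$ and $\dot\lambda(t)=t^{q+s}\nabla_\lambda\mathcal{L}_t(x(t),\lambda(t))+\theta t^{2q+s}A\dot x(t)$, which is the form in which $\ddot x$ and $\dot\lambda$ will be substituted.

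Differentiating term by term, the factor $\theta^2 t^{2q+s}$ in $E_1$ produces the explicit contribution $\theta^2(2q+s)t^{2q+s-1}G(t)$, while for $\tfrac{d}{dt}G(t)$ I would use the chain rule together with Lemma \ref{lemma012} for $\tfrac{d}{dt}\mathcal{L}_t(x_t,\lambda_t)$; the $\|\lambda_t\|^2$-terms cancel and leave $\langle\nabla_x\mathcal{L}_t(x(t),\lambda_t),\dot x(t)\rangle+\langle\nabla_\lambda\mathcal{L}_t(x(t),\lambda_t),\dot\lambda_t\rangle-\tfrac{cp}{2t^{p+1}}(\|x(t)\|^2-\|x_t\|^2)$, the last summand giving the final line of the claimed estimate after multiplication by $\theta^2 t^{2q+s}$. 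The terms $E_2,E_3,E_4$ are differentiated directly, substituting $\ddot x$ into $\tfrac{d}{dt}\|x-x_t+\theta t^q\dot x\|^2$ and $\dot\lambda$ into $\tfrac{d}{dt}\|\lambda-\lambda_t\|^2$; the diagonal pieces $\theta t^q(1-\alpha\theta+\theta q t^{q-1})\|\dot x\|^2$ and $\tfrac{\theta q(1-q)}{2}t^{q-2}\|x-x_t\|^2$ land directly in their coefficients.

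The decisive step is a set of exact cancellations dictated by the saddle structure. Writing $\nabla_x\mathcal{L}_t(x,\lambda)=\nabla_x\mathcal{L}_t(x,\lambda_t)+A^{T}(\lambda-\lambda_t)$ and using $\nabla_\lambda\mathcal{L}_t(x_t,\lambda_t)=0$ from \eqref{z4} (so that $\nabla_\lambda\mathcal{L}_t(x,\lambda_t)=A(x-x_t)$ and $\nabla_\lambda\mathcal{L}_t(x,\lambda)=A(x-x_t)-\tfrac{c}{t^p}(\lambda-\lambda_t)$), the $\langle\nabla_x\mathcal{L}_t(x,\lambda_t),\dot x\rangle$ terms from $\dot E_1$ and $\dot E_2$ cancel, the primal--dual cross terms $\pm\theta t^{q+s}\langle A(x-x_t),\lambda-\lambda_t\rangle$ from $\dot E_2$ and $\dot E_4$ cancel, the terms $\pm\theta^2 t^{2q+s}\langle A\dot x,\lambda-\lambda_t\rangle$ cancel, and the $\langle x-x_t,\dot x\rangle$ terms from $\dot E_2$ and $\dot E_3$ cancel. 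The surviving signed contribution is $-\theta t^{q+s}\langle\nabla_x\mathcal{L}_t(x,\lambda_t),x-x_t\rangle$, to which I apply the $\tfrac{c}{t^p}$-strong convexity of $\mathcal{L}_t(\cdot,\lambda_t)$ to extract $-\theta t^{q+s}G(t)$ (the decisive $-1$ in the first coefficient) and $-\tfrac{c\theta}{2}t^{q+s-p}\|x-x_t\|^2$, together with the dual term $-c\theta t^{q+s-p}\|\lambda-\lambda_t\|^2\le-\tfrac{c\theta}{2}t^{q+s-p}\|\lambda-\lambda_t\|^2$.

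All residual cross terms are then handled by Young's inequality: $-\theta t^q\langle\dot x,\dot x_t\rangle$ with weight $a_1$, $-\theta(\alpha-qt^{q-1})\langle x-x_t,\dot x_t\rangle$ with weight $a_2$, and $-\theta\langle\lambda-\lambda_t,\dot\lambda_t\rangle$ with weight $a_3$, while the term $\theta^2 t^{2q+s}\langle A(x-x_t),\dot\lambda_t\rangle$ inherited from $\dot E_1$ must be split with the tailored weight $\varepsilon=1/t$ to produce exactly $\tfrac12\theta^2\|A\|^2 t^{2q+s-1}\|x-x_t\|^2$ and $\tfrac{\theta^2}{2}t^{2q+s+1}\|\dot\lambda_t\|^2$. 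Adding $\tfrac{K}{t^r}\mathcal{E}(t)$ reproduces the $K$-terms of each coefficient; the only extra piece, the cross term $K\theta t^{q-r}\langle x-x_t,\dot x\rangle$ from $\tfrac{K}{t^r}E_2$, I would split by Young with weight $\varepsilon=t^{-q}/\theta$, which fills in precisely the missing $\tfrac K2 t^{-r}\|x-x_t\|^2$ and $\tfrac{K\theta^2}{2}t^{2q-r}\|\dot x\|^2$. Collecting terms yields the asserted inequality. Beyond the bookkeeping, the main difficulty is to keep the four scalings $t^{q},t^{s},t^{q+s},t^{2q+s}$ aligned so that each cross term cancels and each surviving term lands in the correct coefficient, and to recognize the two non-free Young weights $\varepsilon=1/t$ and $\varepsilon=t^{-q}/\theta$ that the target coefficients dictate rather than any $a_i$. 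The threshold $t_1\ge t_0$ is forced exactly by the Young steps requiring the nonnegative factor $\alpha-qt^{q-1}\ge0$; since $q<1$ gives $qt^{q-1}\to0$ and $\theta>1/\alpha$ gives $\alpha\theta>1$, such a $t_1$ exists.
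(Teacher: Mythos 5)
Your proposal is correct and follows essentially the same route as the paper: differentiate $\mathcal{E}$ term by term, substitute the dynamics, use the identities $\nabla_x\mathcal{L}_t(x,\lambda)=\nabla_x\mathcal{L}_t(x,\lambda_t)+A^T(\lambda-\lambda_t)$ and $\nabla_\lambda\mathcal{L}_t(x_t,\lambda_t)=0$ to produce the cancellations, invoke Lemma \ref{lemma012} for $\frac{d}{dt}\mathcal{L}_t(x_t,\lambda_t)$, extract the gap and the quadratic terms via the $\frac{c}{t^p}$-strong convexity/concavity of $\mathcal{L}_t$, and close with the same Young splittings. Your two variations are purely cosmetic: computing the dual term exactly from the affine identity and then weakening by a factor $\tfrac12$ matches the paper's strong-concavity bound, and your expansion of $\frac{K}{t^r}E_2$ with Young weight $t^{-q}/\theta$ is algebraically identical to the paper's use of $\frac12\|a+b\|^2\le\|a\|^2+\|b\|^2$ before multiplying by $\frac{K}{t^r}$.
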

	
	\begin{proof}
		By using \eqref{z8} and \eqref{z2}, we have 
		\begin{eqnarray}\label{z6}
			\dot{\mathcal{E}}(t)&=&\theta^2(2q+s) t^{2q+s-1}\left(\mathcal{L}_{t}(x(t),\lambda_t)-\mathcal{L}_{t}(x_t,\lambda_t)\right)+\theta^2t^{2q+s}\frac{d}{dt}\left(\mathcal{L}_{t}(x(t),\lambda_t)-\mathcal{L}_{t}(x_t,\lambda_t)\right)\nonumber\\
			&&+\langle x(t)-x_t+\theta t^q\dot{x}(t), \dot{x}(t)-\dot{x}_t+\theta qt^{q-1}\dot{x}(t)+\theta t^q\ddot{x}(t)\rangle+\frac{\theta q(1-q)t^{q-2}}{2}\|x(t)-x_t\|^2\nonumber\\
			&&+(\alpha\theta-1-\theta q t^{q-1})\langle x(t)-x_t, \dot{x}(t)-\dot{x}_t\rangle+\theta\langle \lambda(t)-\lambda_t, \dot{\lambda}(t)-\dot{\lambda}_t \rangle\nonumber\\
			&=&\theta^2(2q+s) t^{2q+s-1}\left(\mathcal{L}_{t}(x(t),\lambda_t)-\mathcal{L}_{t}(x_t,\lambda_t)\right)+\theta^2t^{2q+s}\frac{d}{dt}\left(\mathcal{L}_{t}(x(t),\lambda_t)-\mathcal{L}_{t}(x_t,\lambda_t)\right)\nonumber\\
			&&+\left\langle x(t)-x_t+\theta t^q\dot{x}(t), (1-\alpha\theta+\theta qt^{q-1})\dot{x}(t)-\dot{x}_t-\theta t^{q+s}\left(\nabla_{x}\mathcal{L}(x(t), \lambda(t))+\frac{c}{t^p}x(t)\right)\right\rangle\nonumber\\
			&&+\frac{\theta q(1-q)t^{q-2}}{2}\|x(t)-x_t\|^2+(\alpha\theta-1-\theta q t^{q-1})\langle x(t)-x_t, \dot{x}(t)-\dot{x}_t\rangle\nonumber\\
			&&+\theta t^{q+s}\left\langle \lambda(t)-\lambda_t, \nabla_{\lambda}\mathcal{L}(x(t)+\theta t^q\dot{x}(t), \lambda(t))-\frac{c}{t^p}\lambda(t)\right\rangle-\theta\langle \lambda(t)-\lambda_t, \dot{\lambda}_t \rangle\nonumber\\
			&=&\theta^2(2q+s) t^{2q+s-1}\left(\mathcal{L}_{t}(x(t),\lambda_t)-\mathcal{L}_{t}(x_t,\lambda_t)\right)+\theta^2t^{2q+s}\frac{d}{dt}\left(\mathcal{L}_{t}(x(t),\lambda_t)-\mathcal{L}_{t}(x_t,\lambda_t)\right)\nonumber\\
			&&+(1-\alpha\theta+\theta qt^{q-1})\langle x(t)-x_t, \dot{x}(t)\rangle+(1-\alpha\theta+\theta qt^{q-1})\theta t^q\|\dot{x}(t)\|^2-\langle x(t)-x_t, \dot{x}_t\rangle\nonumber\\
			&&-\theta t^q\langle\dot{x}(t), \dot{x}_t\rangle-\theta t^{q+s}\langle x(t)-x_t+\theta t^q\dot{x}(t), \nabla_{x}\mathcal{L}(x(t), \lambda(t))+\frac{c}{t^p}x(t)\rangle\nonumber\\
			&&+\frac{\theta q(1-q)t^{q-2}}{2}\|x(t)-x_t\|^2+(\alpha\theta-1-\theta q t^{q-1})\langle x(t)-x_t, \dot{x}(t)\rangle\nonumber\\
			&&-(\alpha\theta-1-\theta q t^{q-1})\langle x(t)-x_t, \dot{x}_t\rangle-\theta\langle \lambda(t)-\lambda_t, \dot{\lambda}_t \rangle\nonumber\\
			&&+\theta t^{q+s}\left\langle \lambda(t)-\lambda_t, \nabla_{\lambda}\mathcal{L}(x(t)+\theta t^q\dot{x}(t), \lambda(t))-\frac{c}{t^p}\lambda(t)\right\rangle\nonumber\\
			&=&\theta^2(2q+s) t^{2q+s-1}\left(\mathcal{L}_{t}(x(t),\lambda_t)-\mathcal{L}_{t}(x_t,\lambda_t)\right)+\theta^2t^{2q+s}\frac{d}{dt}\left(\mathcal{L}_{t}(x(t),\lambda_t)-\mathcal{L}_{t}(x_t,\lambda_t)\right)\nonumber\\
			&&+(1-\alpha\theta+\theta qt^{q-1})\theta t^q\|\dot{x}(t)\|^2-(\alpha\theta-\theta q t^{q-1})\langle x(t)-x_t, \dot{x}_t\rangle-\theta t^q\langle\dot{x}(t), \dot{x}_t\rangle\nonumber\\
			&&-\theta t^{q+s}\langle x(t)-x_t+\theta t^q\dot{x}(t), \nabla_{x}\mathcal{L}(x(t), \lambda(t))+\frac{c}{t^p}x(t)\rangle+\frac{\theta q(1-q)t^{q-2}}{2}\|x(t)-x_t\|^2\nonumber\\
			&&-\theta\langle \lambda(t)-\lambda_t, \dot{\lambda}_t\rangle+\theta t^{q+s}\left\langle \lambda(t)-\lambda_t, \nabla_{\lambda}\mathcal{L}(x(t)+\theta t^q\dot{x}(t), \lambda(t))-\frac{c}{t^p}\lambda(t)\right\rangle.
		\end{eqnarray}
Since
		\begin{eqnarray*}
			\nabla_{x}\mathcal{L}(x(t), \lambda(t))+\frac{c}{t^p}x(t)&=&\nabla_{x}\mathcal{L}(x(t), \lambda_t)+\frac{c}{t^p}x(t)+A^T(\lambda(t)-\lambda_t)\\
			&=&\nabla_{x}\mathcal{L}_{t}(x(t), \lambda_t)+A^T(\lambda(t)-\lambda_t)
		\end{eqnarray*}
		and 
		\begin{eqnarray*}
			\nabla_{\lambda}\mathcal{L}(x(t)+\theta t^q\dot{x}(t), \lambda(t))-\frac{c}{t^p}\lambda(t)&=&\nabla_{\lambda}\mathcal{L}(x_t, \lambda(t))-\frac{c}{t^p}\lambda(t)+A\left(x(t)-x_t+\theta t^q\dot{x}(t)\right)\\
			&=&\nabla_{\lambda}\mathcal{L}_{t}(x_t, \lambda(t))+A\left(x(t)-x_t+\theta t^q\dot{x}(t)\right),
		\end{eqnarray*}
we have 
		\begin{eqnarray*}
			&&-\theta t^{q+s}\langle x(t)-x_t+\theta t^q\dot{x}(t), \nabla_{x}\mathcal{L}(x(t), \lambda(t))+\frac{c}{t^p}x(t)\rangle\\
			&&\quad=-\theta t^{q+s}\langle x(t)-x_t+\theta t^q\dot{x}(t), \nabla_{x}\mathcal{L}_{t}(x(t), \lambda_t)+A^T(\lambda(t)-\lambda_t)\rangle\\
			&&\quad=-\theta t^{q+s}\langle x(t)-x_t, \nabla_{x}\mathcal{L}_{t}(x(t), \lambda_t)\rangle-\theta^2t^{2q+s}\langle\dot{x}(t), \nabla_{x}\mathcal{L}_{t}(x(t), \lambda_t)\rangle\\
			&&\qquad-\theta t^{q+s}\langle x(t)-x_t+\theta t^q\dot{x}(t), A^T(\lambda(t)-\lambda_t)\rangle
		\end{eqnarray*}
		and 
		\begin{eqnarray*}
			&&\theta t^{q+s}\left\langle \lambda(t)-\lambda_t, \nabla_{\lambda}\mathcal{L}(x(t)+\theta t^q\dot{x}(t), \lambda(t))-\frac{c}{t^p}\lambda(t)\right\rangle\\
			&&\quad=\theta t^{q+s}\left\langle \lambda(t)-\lambda_t, \nabla_{\lambda}\mathcal{L}_{t}(x_t, \lambda(t))+A\left(x(t)-x_t+\theta t^q\dot{x}(t)\right)\right\rangle\\
			&&\quad=\theta t^{q+s}\left\langle \lambda(t)-\lambda_t, \nabla_{\lambda}\mathcal{L}_{t}(x_t, \lambda(t))\right\rangle
			+\theta t^{q+s}\left\langle A^T(\lambda(t)-\lambda_t), x(t)-x_t+\theta t^q\dot{x}(t)\right\rangle.
		\end{eqnarray*}
		Since $\mathcal{L}_{t}(\cdot, \lambda_t)$ is $\frac{c}{t^p}$-strongly convex and $-\mathcal{L}_{t}(x_t,\cdot)$ is $\frac{c}{t^p}$-strongly convex, it follows that
		$$\langle x(t)-x_t, \nabla_{x}\mathcal{L}_{t}(x(t), \lambda_t)\rangle\geq\mathcal{L}_{t}(x(t), \lambda_t)-\mathcal{L}_{t}(x_t, \lambda_t)+\frac{c}{2t^p}\|x(t)-x_t\|^2$$
and
		$$-\langle \lambda(t)-\lambda_t, \nabla_{\lambda}\mathcal{L}_{t}(x_t, \lambda(t))\rangle\geq\mathcal{L}_{t}(x_t, \lambda_t)-\mathcal{L}_{t}(x_t, \lambda(t))+\frac{c}{2t^p}\|\lambda(t)-\lambda_t\|^2\geq\frac{c}{2t^p}\|\lambda(t)-\lambda_t\|^2,$$
		where the last equality uses \eqref{zv1}.
		 As a result, we obtain
		\begin{eqnarray}\label{yp1}
			&&-\theta t^{q+s}\langle x(t)-x_t+\theta t^q\dot{x}(t), \nabla_{x}\mathcal{L}(x(t), \lambda(t))+\frac{c}{t^p}x(t)\rangle\nonumber\\
			&&\quad\leq-\theta t^{q+s}\left(\mathcal{L}_{t}(x(t), \lambda_t)-\mathcal{L}_{t}(x_t, \lambda_t)\right)-\frac{c\theta t^{q+s-p}}{2}\|x(t)-x_t\|^2\nonumber\\
			&&\qquad-\theta^2t^{2q+s}\langle\dot{x}(t), \nabla_{x}\mathcal{L}_{t}(x(t), \lambda_t)\rangle
			-\theta t^{q+s}\langle x(t)-x_t+\theta t^q\dot{x}(t), A^T(\lambda(t)-\lambda_t)\rangle
		\end{eqnarray}
		and 
		\begin{eqnarray}\label{yp2}
			&&\theta t^{q+s}\left\langle \lambda(t)-\lambda_t, \nabla_{\lambda}\mathcal{L}(x(t)+\theta t^q\dot{x}(t), \lambda(t))-\frac{c}{t^p}\lambda(t)\right\rangle\nonumber\\
			&&\quad\leq-\frac{c\theta t^{q+s-p}}{2}\|\lambda(t)-\lambda_t\|^2
			+\theta t^{q+s}\left\langle A^T(\lambda(t)-\lambda_t), x(t)-x_t+\theta t^q\dot{x}(t)\right\rangle.
		\end{eqnarray}

Using \eqref{z3}, we have
		\begin{eqnarray*}
			\frac{d}{dt}\mathcal{L}_{t}(x(t),\lambda_t)&=& \langle\nabla f(x(t)), \dot{x}(t)\rangle+\langle Ax(t)-b, \dot{\lambda}_t\rangle+\langle A^T\lambda_t, \dot{x}(t)\rangle+\frac{c}{t^p}\langle x(t), \dot{x}(t)\rangle\\
			&&-\frac{c}{t^p}\langle \lambda_t, \dot{\lambda}_t\rangle-\frac{cp}{2t^{p+1}}\left(\|x(t)\|^2-\|\lambda_t\|^2\right)\\
			&=&\langle\nabla_{x}\mathcal{L}_{t}(x(t),\lambda_t), \dot{x}(t)\rangle+\langle Ax(t)-b, \dot{\lambda}_t\rangle-\frac{c}{t^p}\langle \lambda_t, \dot{\lambda}_t\rangle\\
			&&-\frac{cp}{2t^{p+1}}\left(\|x(t)\|^2-\|\lambda_t\|^2\right)\\
			&=&\langle\nabla_{x}\mathcal{L}_{t}(x(t),\lambda_t), \dot{x}(t)\rangle+\langle Ax(t)-b-\frac{c}{t^p}\lambda_t, \dot{\lambda}_t\rangle-\frac{cp}{2t^{p+1}}\left(\|x(t)\|^2-\|\lambda_t\|^2\right)\\
			&=&\langle\nabla_{x}\mathcal{L}_{t}(x(t),\lambda_t), \dot{x}(t)\rangle+\langle A(x(t)-x_t), \dot{\lambda}_t\rangle-\frac{cp}{2t^{p+1}}\left(\|x(t)\|^2-\|\lambda_t\|^2\right),
		\end{eqnarray*} 
		where the last equality uses \eqref{z4}. This together with  Lemma \ref{lemma012}  yields
		\begin{eqnarray}\label{abfb}
			\frac{d}{dt}\left(\mathcal{L}_{t}(x(t),\lambda_t)-\mathcal{L}_{t}(x_t,\lambda_t)\right)
			&=&\langle\nabla_{x}\mathcal{L}_{t}(x(t),\lambda_t), \dot{x}(t)\rangle+\langle A(x(t)-x_t), \dot{\lambda}_t\rangle\nonumber\\
			&&+\frac{cp}{2t^{p+1}}\left(\|x_t\|^2-\|x(t)\|^2\right).
		\end{eqnarray} 
		Substituting \eqref{yp1}, \eqref{yp2} and \eqref{abfb} into \eqref{z6}, we have 
				\begin{eqnarray}\label{yp3}
			\dot{\mathcal{E}}(t)&\leq&\theta t^{q+s}\left(\theta(2q+s)t^{q-1}-1\right) \left(\mathcal{L}_{t}(x(t),\lambda_t)-\mathcal{L}_{t}(x_t,\lambda_t)\right)+\theta^2t^{2q+s}\langle A(x(t)-x_t), \dot{\lambda}_t\rangle\nonumber\\
			&&+\frac{cp\theta^2}{2}t^{2q+s-p-1}\left(\|x_t\|^2-\|x(t)\|^2\right)+(1-\alpha\theta+\theta qt^{q-1})\theta t^q\|\dot{x}(t)\|^2\nonumber\\
			&&-(\alpha\theta-\theta q t^{q-1})\langle x(t)-x_t, \dot{x}_t\rangle-\theta t^q\langle\dot{x}(t), \dot{x}_t\rangle-\theta\langle \lambda(t)-\lambda_t, \dot{\lambda}_t \rangle\nonumber\\
			&&+\frac{1}{2}\left(q(1-q)\theta t^{q-2}-c\theta t^{q+s-p}\right)\|x(t)-x_t\|^2-\frac{c\theta t^{q+s-p}}{2}\|\lambda(t)-\lambda_t\|^2.
		\end{eqnarray}
Notice that there exists $t_1\ge t_0$ such that
\begin{eqnarray}\label{zg2}
		\alpha\theta-1-\theta q t^{q-1}>0
		\end{eqnarray}
for all $t\ge t_1$ (since  $\theta>\frac{1}{\alpha}$ and $0\leq q<1$).
Because
		\begin{eqnarray*}
			\theta^2 t^{2q+s}\langle A(x(t)-x_t), \dot{\lambda}_t\rangle\leq\frac{\theta^2\|A\|^2t^{2q+s-1}}{2}\|x(t)-x_t\|^2+\frac{\theta^2 t^{2q+s+1}}{2}\|\dot{\lambda}_t\|^2,
		\end{eqnarray*}		
		$$-\langle \dot{x}(t), \dot{x}_t\rangle\leq\frac{1}{2a_1}\|\dot{x}(t)\|^2+\frac{a_1}{2}\|\dot{x}_t\|^2,$$
		$$-\langle x(t)-x_t, \dot{x}_t\rangle\leq\frac{t^{q+s-p}}{2a_2}\|x(t)-x_t\|^2+\frac{a_2t^{p-q-s}}{2}\|\dot{x}_t\|^2,$$
and
$$-\langle \lambda(t)-\lambda_t, \dot{\lambda}_t \rangle\leq\frac{t^{q+s-p}}{2a_3}\|\lambda(t)-\lambda_t\|^2+\frac{a_3t^{p-q-s}}{2}\|\dot{\lambda}_t\|^2$$ where  $a_1>0$, $a_2>0$ and $a_3>0$ are arbitrary constants, it follows from \eqref{yp3} and \eqref{zg2}  that
		\begin{eqnarray}\label{ypz1}
			\dot{\mathcal{E}}(t)&\leq&\theta t^{q+s}\left(\theta(2q+s)t^{q-1}-1\right) \left(\mathcal{L}_{t}(x(t),\lambda_t)-\mathcal{L}_{t}(x_t,\lambda_t)\right)\nonumber\\
			&&+\frac{1}{2}\left(\theta^2\|A\|^2t^{2q+s-1}+q(1-q)\theta t^{q-2}-c\theta t^{q+s-p}+\frac{(\alpha\theta-\theta qt^{q-1})t^{q+s-p}}{a_2}\right)\|x(t)-x_t\|^2\nonumber\\
			&&+\theta t^q\left(1-\alpha\theta+\frac{1}{2a_1}+\theta qt^{q-1}\right)\|\dot{x}(t)\|^2+\frac{\theta}{2}\left(\frac{1}{a_3}-c\right)t^{q+s-p}\|\lambda(t)-\lambda_t\|^2\nonumber\\
			&&+\frac{\theta}{2}\left(a_1 t^q+(\alpha-qt^{q-1})a_2t^{p-q-s}\right)\|\dot{x}_t\|^2+\frac{\theta}{2}\left(\theta t^{2q+s+1}+a_3t^{p-q-s}\right)\|\dot{\lambda}_t\|^2\nonumber\\
			&&+\frac{cp\theta^2}{2}t^{2q+s-p-1}\left(\|x_t\|^2-\|x(t)\|^2\right) 
		\end{eqnarray}
for all $t\ge t_1$. Again using \eqref{z8}, we have
		\begin{eqnarray*}
			\mathcal{E}(t)&\leq&\theta^2 t^{2q+s}\left(\mathcal{L}_{t}(x(t),\lambda_t)-\mathcal{L}_{t}(x_t,\lambda_t)\right)+\|x(t)-x_t\|^2+\theta^2t^{2q}\|\dot{x}(t)\|^2\nonumber\\
			&&+\frac{\alpha\theta-1-\theta q t^{q-1}}{2}\|x(t)-x_t\|^2+\frac{\theta}{2}\|\lambda(t)-\lambda_t\|^2\\
			&=&\theta^2 t^{2q+s}\left(\mathcal{L}_{t}(x(t),\lambda_t)-\mathcal{L}_{t}(x_t,\lambda_t)\right)+\frac{\alpha\theta+1-\theta q t^{q-1}}{2}\|x(t)-x_t\|^2\nonumber\\
			&&+\theta^2t^{2q}\|\dot{x}(t)\|^2+\frac{\theta}{2}\|\lambda(t)-\lambda_t\|^2.
		\end{eqnarray*}
This together with \eqref{ypz1} yields the desired result.
	\end{proof}

Next, we apply Lemma \ref{lemma3.1} to establish the strong convergence of the trajectory of \eqref{z2}  and the convergence rates of the primal-dual gap, the objective residual and the feasibility violation.
	
\begin{theorem}\label{ztt3.1}
	Suppose that Assumption \ref{AS-F} holds and  $p-q-1<s<1-3q$. Let $(x(t),\lambda(t))$ be a solution of \eqref{z2} and $(x^*,\lambda^*)$ be the minimum norm element of  $\Omega$. Then, 
$$\lim_{t\rightarrow+\infty}\|(x(t), \lambda(t))-(x^*,\lambda^*)\|=0$$ and  the following conclusions hold:
		\begin{itemize}
			\item[(i)] When $p-q-1<s<\frac{p-3q-1}{2}$, it holds
			$$\|x(t)-x_t\|^2\leq\mathcal{O}\left(\frac{1}{t^{2(1+s+q-p)}}\right), \quad \|\lambda(t)-\lambda_t\|^2\leq\mathcal{O}\left(\frac{1}{t^{2(1+s+q-p)}}\right),$$ $$\|\dot{x}(t)\|^2\leq\mathcal{O}\left(\frac{1}{t^{2(1+s+2q-p)}}\right),\quad 	\|Ax(t)-b\|\leq\mathcal{O}\left(\frac{1}{t^{p}}+\frac{1}{t^{1+s+q-p}}\right).$$
			Further, if $\frac{2p-2-4q}{3}<s<\frac{p-3q-1}{2}$, then it also holds
			$$\mathcal{L}_{t}(x(t),\lambda_t)-\mathcal{L}_{t}(x_t,\lambda_t)\leq\mathcal{O}\left(\frac{1}{t^{4q+3s-2p+2}}\right).$$
			$$\mathcal{L}(x(t),\lambda^*)-\mathcal{L}(x^*,\lambda^*)\leq\mathcal{O}\left(\frac{1}{t^{p}}+\frac{1}{t^{4q+3s-2p+2}}\right),$$ $$|f(x(t))-f(x^*)|\leq\mathcal{O}\left(\frac{1}{t^{p}}+\frac{1}{t^{4q+3s-2p+2}}\right).$$

			\item[(ii)] When $\frac{p-3q-1}{2}\leq s<1-3q$, it holds
			$$\mathcal{L}_{t}(x(t),\lambda_t)-\mathcal{L}_{t}(x_t,\lambda_t)\leq\mathcal{O}\left(\frac{1}{t^{1-r}}\right), \quad \|x(t)-x_t\|^2\leq\mathcal{O}\left(\frac{1}{t^{1-2q-s-r}}\right),$$ $$\|\lambda(t)-\lambda_t\|^2\leq\mathcal{O}\left(\frac{1}{t^{1-2q-s-r}}\right), \quad \|\dot{x}(t)\|^2\leq\mathcal{O}\left(\frac{1}{t^{1-s-r}}\right),$$
			$$\mathcal{L}(x(t),\lambda^*)-\mathcal{L}(x^*,\lambda^*)\leq\mathcal{O}\left(\frac{1}{t^{p}}+\frac{1}{t^{(1-2q-s-r)/2}}\right),$$
			$$|f(x(t))-f(x^*)|\leq\mathcal{O}\left(\frac{1}{t^{p}}+\frac{1}{t^{(1-2q-s-r)/2}}\right), \quad \|Ax(t)-b\|\leq\mathcal{O}\left(\frac{1}{t^{p}}+\frac{1}{t^{(1-2q-s-r)/2}}\right),$$
			where $r=\max\{q, p-q-s\}$.
			
			%\item[(iii)] When $p-2q< s<1-3q$,  it holds      
			%$$\mathcal{L}_{t}(x(t),\lambda_t)-\mathcal{L}_{t}(x_t,\lambda_t)\leq\mathcal{O}\left(\frac{1}{t^{1-q}}\right),\quad \|x(t)-x_t\|^2\leq\mathcal{O}\left(\frac{1}{t^{1-(q+p)}}\right),$$
			%$$\|\lambda(t)-\lambda_t\|^2\leq\mathcal{O}\left(\frac{1}{t^{1-3q-s}}\right), \quad \|\dot{x}(t)\|^2\leq\mathcal{O}\left(\frac{1}{t^{1-q-s}}\right),$$
			%$$\mathcal{L}(x(t),\lambda^*)-\mathcal{L}(x^*,\lambda^*)\leq\mathcal{O}\left(\frac{1}{t^p}+\frac{1}{t^{(1-(q+p))/2}}\right),$$
			%$$|f(x(t))-f(x^*)|\leq\mathcal{O}\left(\frac{1}{t^p}+\frac{1}{t^{(1-(q+p))/2}}\right), \quad \|Ax(t)-b\|\leq\mathcal{O}\left(\frac{1}{t^p}+\frac{1}{t^{(1-(q+p))/2}}\right).$$ 
		\end{itemize}
\end{theorem}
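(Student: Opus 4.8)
The plan is to run a single Lyapunov argument built on the energy $\mathcal{E}(t)$ of Lemma~\ref{lemma3.1}, turn the differential inequality stated there into a decay rate for $\mathcal{E}$ by a careful choice of the free constants $K,r,a_1,a_2,a_3$, and then transport everything from the auxiliary pair $(x_t,\lambda_t)$ to the minimal-norm solution $(x^*,\lambda^*)$ via Lemma~\ref{lemma2.1}. First I would record that, for $t\ge t_1$, the coefficient $\alpha\theta-1-\theta q t^{q-1}$ is positive, so every summand of \eqref{z8} other than the gap term is nonnegative; this yields $\mathcal{E}(t)\ge 0$ together with the one-sided bounds $\mathcal{E}(t)\ge \theta^2 t^{2q+s}(\mathcal{L}_t(x(t),\lambda_t)-\mathcal{L}_t(x_t,\lambda_t))$, $\mathcal{E}(t)\ge\tfrac12\|x(t)-x_t\|^2$, $\mathcal{E}(t)\ge\tfrac\theta2\|\lambda(t)-\lambda_t\|^2$, and, after completing the square in the first term, $\mathcal{E}(t)\gtrsim t^{2q}\|\dot x(t)\|^2$. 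Hence any decay estimate for $\mathcal{E}$ gives at once the four rates asserted for the gap, $\|x(t)-x_t\|^2$, $\|\lambda(t)-\lambda_t\|^2$ and $\|\dot x(t)\|^2$. In particular $\|x(t)-x_t\|\to0$ and $\|\lambda(t)-\lambda_t\|\to0$ throughout $p-q-1<s<1-3q$, so the headline strong convergence follows from Lemma~\ref{lemma2.1}(i) and the triangle inequality $\|(x(t),\lambda(t))-(x^*,\lambda^*)\|\le\|(x(t),\lambda(t))-(x_t,\lambda_t)\|+\|(x_t,\lambda_t)-(x^*,\lambda^*)\|$.

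The core is the parameter choice in Lemma~\ref{lemma3.1}. I would fix $a_1$ large, $a_3>1/c$, $a_2>0$, and set $r=\max\{q,p-q-s\}$ with $K>0$ small, so that for $t$ large the four bracketed coefficients multiplying the gap, $\|x(t)-x_t\|^2$, $\|\dot x(t)\|^2$ and $\|\lambda(t)-\lambda_t\|^2$ are all nonpositive. The constraints that pin down $r$ are: the gap coefficient $\theta(2q+s)t^{q-1}-1+\theta Kt^{q-r}$ requires $r\ge q$; the $t^{-r}$ penalty terms in the $\|x(t)-x_t\|^2$ and $\|\lambda(t)-\lambda_t\|^2$ coefficients must be absorbed by the strong-convexity gain $-c\theta t^{q+s-p}$, forcing $r\ge p-q-s$; and the growth $\theta^2\|A\|^2 t^{2q+s-1}$ is dominated by $-c\theta t^{q+s-p}$ precisely because $p<1-q$ gives $2q+s-1\le q+s-p$, which is exactly where that standing assumption is used. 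The $\|\dot x(t)\|^2$ coefficient is negative since $1-\alpha\theta<0$ (from $\theta>1/\alpha$) once $a_1$ is large. Discarding these terms leaves $\dot{\mathcal E}(t)+\tfrac{K}{t^r}\mathcal E(t)\le S(t)$, where $S(t)$ collects the source terms; by Lemma~\ref{lemma2.1}(ii) one has $\|\dot x_t\|^2,\|\dot\lambda_t\|^2=\mathcal O(t^{-2})$ and $\|x_t\|^2-\|x(t)\|^2\le\|x^*\|^2$, so the two competing leading source terms are $\theta^2 t^{2q+s+1}\|\dot\lambda_t\|^2=\mathcal O(t^{2q+s-1})$ and the cross terms of order $t^{p-q-s}(\|\dot x_t\|^2+\|\dot\lambda_t\|^2)=\mathcal O(t^{p-q-s-2})$. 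Multiplying by the stretched-exponential integrating factor $\exp(\tfrac{K}{1-r}t^{1-r})$ and integrating (Laplace-type asymptotics of $\tfrac1{p(t)}\int_{t_1}^t p(\tau)S(\tau)d\tau$) gives the quasi-static bound $\mathcal E(t)\lesssim t^{r}S(t)$. The threshold $s=\tfrac{p-3q-1}{2}$ is exactly the crossover $2q+s-1=p-q-s-2$: for $s<\tfrac{p-3q-1}{2}$ the $t^{p-q-s-2}$ term dominates and gives the faster rates of case (i), while for $s\ge\tfrac{p-3q-1}{2}$ the $t^{2q+s-1}$ term dominates and gives the $t^{-(1-r)}$ rates of case (ii).

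For the objective residual and the feasibility violation I would use the saddle relation \eqref{z4}, which gives $Ax_t-b=\tfrac{c}{t^p}\lambda_t$, and the decomposition of $\mathcal{L}(x(t),\lambda^*)-\mathcal{L}(x^*,\lambda^*)$ and $f(x(t))-f(x^*)$ into the $\mathcal{L}_t$-gap, the $\tfrac{c}{t^p}$-regularization correction and the term $\langle\lambda^*,Ax(t)-b\rangle$, with the matching lower bound coming from convexity, $f(x(t))-f(x^*)\ge-\langle\lambda^*,Ax(t)-b\rangle$. The feasibility rate itself is obtained by integrating the dual equation $\dot\lambda(t)+c\,t^{q+s-p}\lambda(t)=t^{q+s}(Ax(t)-b)+\theta t^{2q+s}A\dot x(t)$: isolating the residual and integrating by parts introduces the integrating factor $e^{\mu t^{\nu}}$ with $\nu=q+s-p+1>0$ and $\mu=\tfrac{c}{\nu}>0$, and produces precisely an inequality of the form $\|g(t)+e^{-\mu t^\nu}\int_\delta^t a(\tau)g(\tau)d\tau\|\le C$ assumed in Lemmas~\ref{lemma2.2} and~\ref{lemma2.3}, the sign of the kernel $a$ selecting which of the two lemmas applies. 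Their conclusion $\sup_t\|g(t)\|<\infty$ is what yields the feasibility bound; the nonzero exponential weight is forced by the Tikhonov damping, which is exactly why the older \cite[Lemma 6]{HeHFiietal(2022)} (the case $\mu=0$) is not applicable here.

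The main obstacle I anticipate is the exponent bookkeeping in the second step: one must make four competing coefficients nonpositive (pinning $r=\max\{q,p-q-s\}$ and using $p<1-q$, $\theta>1/\alpha$) while identifying the dominant source term, whose exponent crosses over exactly at $s=\tfrac{p-3q-1}{2}$. Making the Laplace/quasi-static asymptotics $\mathcal E(t)\lesssim t^{r}S(t)$ rigorous — the homogeneous part decays stretched-exponentially and is negligible, but the forced part needs care — and correctly matching $\nu=q+s-p+1$ in the dual integrating factor so that Lemmas~\ref{lemma2.2}/\ref{lemma2.3} apply, are the delicate quantitative points; the remaining estimates are routine differentiation and integration.
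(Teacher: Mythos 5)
Your Lyapunov core reproduces the paper's argument: the same energy $\mathcal{E}$, the same choice $r=\max\{q,p-q-s\}$ with $K$ small, $a_1$ large and $a_3>1/c$ (note you need $a_2>\alpha/c$, not merely $a_2>0$, so that $\frac{(\alpha\theta-q\theta t^{q-1})t^{q+s-p}}{a_2}$ is absorbed by $-c\theta t^{q+s-p}$, though your ``absorbed by the strong-convexity gain'' remark shows you saw this), the same two competing source exponents $t^{2q+s-1}$ and $t^{p-q-s-2}$ with crossover at $s=\frac{p-3q-1}{2}$, the same integrating-factor estimate $\mathcal{E}(t)\lesssim t^{r}S(t)$, and the same passage to strong convergence via Lemma \ref{lemma2.1}(i) and the triangle inequality. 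Up to that point your outline matches the paper and is correct.

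The genuine gap is your treatment of the feasibility violation, and hence of the objective residual, which depends on it through \eqref{z15}--\eqref{z16}. You propose to obtain $\|Ax(t)-b\|$ by integrating the dual equation and invoking Lemmas \ref{lemma2.2}/\ref{lemma2.3}. That is the machinery of Lemma \ref{lemma3.2} and it belongs to Theorem \ref{ztt3.2}, not here, for two concrete reasons. First, its conclusion is $\sup_t t^{2q+s}\|Ax(t)-b\|<+\infty$, i.e.\ the rate $\mathcal{O}(t^{-(2q+s)})$; this is not the rate claimed in Theorem \ref{ztt3.1}, and in case (i) it is vacuous, since $s<\frac{p-3q-1}{2}$ forces $2q+s<\frac{p+q-1}{2}<0$. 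Second, the hypotheses of those lemmas constrain the kernel $V(t)=\bigl(\frac{t^{-q}}{\theta}-(2q+s)t^{-1}-ct^{q+s-p}\bigr)h(t)$: Lemma \ref{lemma2.2} needs $V\ge 0$, which requires $s<p-2q$, while Lemma \ref{lemma2.3} needs $V\le 0$ together with $\frac{1}{h(t)}\int_\delta^t V(\tau)\,d\tau\ge C_0>-1$; when $s>p-2q$ (which occurs in case (ii), because $1-3q>p-2q$ follows from $p<1-q$) the normalized integral tends to exactly $-1$, so no admissible $C_0$ exists and neither lemma applies. This is precisely why the paper confines that technique to the window $-2q<s\le p-2q$ of Theorem \ref{ztt3.2}. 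What Theorem \ref{ztt3.1} actually uses is far more elementary: by \eqref{z4}, $Ax_t-b=\frac{c}{t^p}\lambda_t$, so $\|Ax(t)-b\|\le\|A\|\,\|x(t)-x_t\|+\frac{c}{t^p}\|(x^*,\lambda^*)\|$, and the feasibility rate is just the $\|x(t)-x_t\|$ rate you already extracted from $\mathcal{E}$, plus the $t^{-p}$ tail. Plugging this into \eqref{z15}--\eqref{z16}, together with $\|x^*\|^2-\|x(t)\|^2\to 0$ (from Lemma \ref{lemma2.1} and the $\|x(t)-x_t\|$ rate), yields the stated bounds on $\mathcal{L}(x(t),\lambda^*)-\mathcal{L}(x^*,\lambda^*)$ and $|f(x(t))-f(x^*)|$. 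With that substitution your proof closes; as written, the feasibility and objective-residual parts would fail.
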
 
\begin{proof}
According to Assumption \ref{AS-F}, we can take $a_1, a_2, a_3, r$, and $K$ in Lemma \ref{lemma3.1} such that $r=\max\{q, p-q-s\}$, $a_1>\frac{1}{2(\alpha\theta-1)}$, $a_2>\frac{\alpha}{c}$,  $a_3>\frac{1}{c}$, and
		$$0<K<\min\left\{\frac{1}{\theta}, \alpha-\frac{1}{\theta}\left(1+\frac{1}{2a_1}\right), \frac{ca_2-\alpha}{a_2(\alpha+\frac{1}{\theta})}, c-\frac{1}{a_3}\right\}.$$
		By  Lemma \ref{lemma3.1} and using \eqref{zv1}, there exists a constant $t_2\geq\max\{t_1,1\}$ such that 
		\begin{eqnarray*}
			\dot{\mathcal{E}}(t)+\frac{K}{t^r}\mathcal{E}(t)
			&\leq&\frac{\theta}{2}\left(a_1t^q+(\alpha- qt^{q-1})a_2t^{p-q-s}\right)\|\dot{x}_t\|^2+\frac{\theta}{2}\left(\theta t^{2q+s+1}+a_3 t^{p-q-s}\right)\|\dot{\lambda}_t\|^2\\
			&&+\frac{cp\theta^2}{2}t^{2q+s-p-1}\|x_t\|^2, \quad \forall t\geq t_2.
		\end{eqnarray*}	
		Denote $y_t=(x_t, \lambda_t)$ and $y^*=(x^*,\lambda^*)$. By Lemma \ref{lemma2.1},
		$$\|x_t\|^2\leq\|y_t\|^2\leq\|y^*\|^2$$
and
		$$\max\{\|\dot{x}_t\|^2,\|\dot{\lambda}_t\|^2\}\leq\|\dot{y}_t\|^2\leq\frac{p^2}{t^2}\|y_t\|^2\leq\frac{p^2}{t^2}\|y^*\|^2 $$
 for $t\geq t_0$.
		As a result, we get  for any $t\geq t_2$,
		\begin{eqnarray}\label{z11}
			\dot{\mathcal{E}}(t)+\frac{K}{t^r}\mathcal{E}(t)
			&\leq&\frac{\theta p^2}{2}\left(a_1t^{q-2}+(\alpha- qt^{q-1})a_2t^{p-q-s-2}\right)\|y^*\|^2\nonumber\\
			&&+\frac{\theta p^2}{2}\left(\theta t^{2q+s-1}+a_3 t^{p-q-s-2}\right)\|y^*\|^2+\frac{cp\theta^2}{2}t^{2q+s-p-1}\|y^*\|^2\nonumber\\
			&\leq&\frac{\theta p\|y^*\|^2}{2}\left(a_1pt^{q-2}+(\alpha a_2+a_3)pt^{p-q-s-2}+\theta pt^{2q+s-1}+c\theta t^{2q+s-p-1}\right)\nonumber\\
			&\leq&\frac{\theta p\|y^*\|^2}{2}\left(a_1pt^{q-2}+(\alpha a_2+a_3)pt^{p-q-s-2}+(p+c)\theta t^{2q+s-1}\right),
		\end{eqnarray}
		where the last inequality uses $2q+s-p-1\leq2q+s-1$.  
		
	It follows from \eqref{z4} and Lemma \ref{lemma2.1} that
		\begin{eqnarray}\label{zd1}
			\|Ax(t)-b\|&=&\|A(x(t)-x_t)+Ax_t-b\|\nonumber\\
			&\leq&\|A\|\cdot\|x(t)-x_t\|+\|Ax_t-b\|\nonumber\\
			&=&\|A\|\cdot\|x(t)-x_t\|+\frac{c}{t^p}\|\lambda_t\|\nonumber\\
			&\leq&\|A\|\cdot\|x(t)-x_t\|+\frac{c}{t^p}\|y^*\|.
		\end{eqnarray}	
		Since $\mathcal{L}_{t}(x_t,\lambda_t)\leq\mathcal{L}_{t}(x^*,\lambda_t)$ and  $(x^*,\lambda^*)\in\Omega$, it follows from \eqref{zc1} and \eqref{z3} that
		\begin{eqnarray*}
			\mathcal{L}_{t}(x(t),\lambda_t)-\mathcal{L}_{t}(x_t,\lambda_t)&\geq&\mathcal{L}_{t}(x(t),\lambda_t)-\mathcal{L}_{t}(x^*,\lambda_t)\\
			&=&\mathcal{L}(x(t),\lambda_t)-\mathcal{L}(x^*,\lambda_t)+\frac{c}{2t^p}\left(\|x(t)\|^2-\|x^*\|^2\right)\\
			&=&\mathcal{L}(x(t),\lambda_t)-\mathcal{L}(x^*,\lambda^*)+\frac{c}{2t^p}\left(\|x(t)\|^2-\|x^*\|^2\right)\\
			&=&\mathcal{L}(x(t),\lambda^*)-\mathcal{L}(x^*,\lambda^*)+\langle\lambda_t-\lambda^*, Ax(t)-b\rangle\\
			&&+\frac{c}{2t^p}\left(\|x(t)\|^2-\|x^*\|^2\right)\\
			&\geq&\mathcal{L}(x(t),\lambda^*)-\mathcal{L}(x^*,\lambda^*)-\|\lambda_t-\lambda^*\|\cdot\| Ax(t)-b\|\\
			&&+\frac{c}{2t^p}\left(\|x(t)\|^2-\|x^*\|^2\right).
		\end{eqnarray*}	
		This implies
		\begin{eqnarray}\label{z15}
			0\leq\mathcal{L}(x(t),\lambda^*)-\mathcal{L}(x^*,\lambda^*)
			&\leq&\mathcal{L}_{t}(x(t),\lambda_t)-\mathcal{L}_{t}(x_t,\lambda_t)+\|\lambda_t-\lambda^*\|\cdot\| Ax(t)-b\|\nonumber\\
			&&+\frac{c}{2t^p}\left(\|x^*\|^2-\|x(t)\|^2\right).
		\end{eqnarray}	
		By the definition of  $\mathcal{L}$, we obtain 
		\begin{eqnarray}\label{z16}
			|f(x(t))-f(x^*)|\leq\mathcal{L}(x(t),\lambda^*)-\mathcal{L}(x^*,\lambda^*)+\|\lambda^*\|\cdot\|Ax(t)-b\|.
		\end{eqnarray}
		
Next, we analyze separately  cases (i) and  (ii).

		{\bf (i)} If $p-q-1<s<\frac{p-3q-1}{2}$, then
		\begin{eqnarray*}
			\begin{cases}r=\max\{q, p-q-s\}=p-q-s\in (0,1),\\
				q-2<2q+s-1,\\
				2q+s-1<  p-q-s-2.
			\end{cases}
		\end{eqnarray*} 
As a result, there exist $t_2\geq\max\{t_1, 1\}$  and $C_1>0$ such that
		$$\frac{\theta p\|y^*\|^2}{2}\left(a_1pt^{q-2}+(\alpha a_2+a_3)pt^{p-q-s-2}+(p+c)\theta t^{2q+s-1}\right)\leq C_1t^{p-q-s-2}, \quad \forall t\geq t_2.$$
		This together with \eqref{z11} yields
		\begin{eqnarray}\label{z9}
			\dot{\mathcal{E}}(t)+\frac{K}{t^r}\mathcal{E}(t)
			&\leq&C_1 t^{p-q-s-2}, \quad \forall t\geq t_2.
		\end{eqnarray}	
		Multiplying both sides of \eqref{z9} by $e^{\frac{K}{1-r}t^{1-r}}$, we have 
		\begin{eqnarray}\label{4yp1}
			\frac{d}{dt}\left(e^{\frac{K}{1-r}t^{1-r}}\mathcal{E}(t)\right)
			&\leq&C_1 t^{p-q-s-2}e^{\frac{K}{1-r}t^{1-r}}, \quad \forall t\geq t_2.
		\end{eqnarray}	
		Since $r=p-q-s$, $0<r<1$ and $s>p-q-1$, we get
		\begin{eqnarray}\label{z10}
			\begin{cases}
				p-q-s-2+r=2(p-q-s-1)<0,\\
				p-q-s-3+r=p-q-s-2+r-1< p-q-s-2.
			\end{cases}
		\end{eqnarray} 
		Then, there exist  $C_2\in(0,1)$ and $t_3\geq t_2$ such that
		$$(1-C_2)Kt^{p-q-s-2}+(p-q-s-2+r)t^{p-q-s-3+r}\geq0, \quad \forall t\geq t_3.$$
	It follows that
		\begin{eqnarray*}
			\frac{d}{dt}\left(t^{p-q-s-2+r}e^{\frac{K}{1-r}t^{1-r}}\right)&=&\left((p-q-s-2+r)t^{p-q-s-3+r}+(1-C_2)Kt^{p-q-s-2}\right)e^{\frac{K}{1-r}t^{1-r}}\\
			&&+C_2Kt^{p-q-s-2}e^{\frac{K}{1-r}t^{1-r}}\\
			&\geq& C_2Kt^{p-q-s-2}e^{\frac{K}{1-r}t^{1-r}}, \quad \forall t\geq t_3.
		\end{eqnarray*}	
	This together with  \eqref{4yp1} yields
		\begin{eqnarray*}
			\frac{d}{dt}\left(e^{\frac{K}{1-r}t^{1-r}}\mathcal{E}(t)\right)
			&\leq&\frac{C_1}{C_2K}\frac{d}{dt}\left(t^{p-q-s-2+r}e^{\frac{K}{1-r}t^{1-r}}\right), \quad \forall t\geq t_3,
		\end{eqnarray*}	
		which implies that for every $t\geq t_3$ 
		\begin{eqnarray*}
			e^{\frac{K}{1-r}t^{1-r}}\mathcal{E}(t)
			&\leq&e^{\frac{K}{1-r}t_{3}^{1-r}}\mathcal{E}(t_3)+
			\frac{C_1}{C_2K}\left(t^{p-q-s-2+r}e^{\frac{K}{1-r}t^{1-r}}-t_{3}^{p-q-s-2+r}e^{\frac{K}{1-r}t_{3}^{1-r}}\right).
		\end{eqnarray*}	
		As a result,
		\begin{eqnarray*}
			\mathcal{E}(t)
			&\leq&\frac{C_3}{e^{\frac{K}{1-r}t^{1-r}}}+
			\frac{C_1}{C_2K}t^{p-q-s-2+r}, \quad \forall t\geq t_3,
		\end{eqnarray*}	
		where $C_3=e^{\frac{K}{1-r}t_{3}^{1-r}}\mathcal{E}(t_3)-\frac{C_1}{C_2K}t_{3}^{p-q-s-2+r}e^{\frac{K}{1-r}t_{3}^{1-r}}$. This, combined with  $r=p-q-s$, implies that 
		\begin{eqnarray}\label{z12}
			\mathcal{E}(t)
			\leq C_4t^{2(p-q-s-1)}
			, \quad \forall t\geq t_4,
		\end{eqnarray}	
		where  $t_4\ge t_3$ and $C_4>0$ is a constant.
		It follows from \eqref{z8}, \eqref{z10} and \eqref{z12} that
		\begin{eqnarray}\label{zj1}
		\|x(t)-x_t\|^2\leq\mathcal{O}\left(\frac{1}{t^{2(1+s+q-p)}}\right), \quad \|\lambda(t)-\lambda_t\|^2\leq\mathcal{O}\left(\frac{1}{t^{2(1+s+q-p)}}\right),
    	\end{eqnarray}	
		$$\|\dot{x}(t)\|^2\leq\mathcal{O}\left(\frac{1}{t^{2(1+s+2q-p)}}\right).$$
		Using \eqref{zj1}  and \eqref{zd1}, we obtain 
		\begin{eqnarray*}\label{zdf1} 
			\|Ax(t)-b\|\leq\mathcal{O}\left(\frac{1}{t^{p}}+\frac{1}{t^{1+s+q-p}}\right).
		\end{eqnarray*}
	
		Further, if $\frac{2p-2-4q}{3}<s<\frac{p-3q-1}{2}$,  from \eqref{z8} and \eqref{z12} we have 
		\begin{equation}\label{4ypa}
\mathcal{L}_{t}(x(t),\lambda_t)-\mathcal{L}_{t}(x_t,\lambda_t)\leq\mathcal{O}\left(\frac{1}{t^{4q+3s-2p+2}}\right).
\end{equation}
		 Since $x_t\to x^*$ as $t\to +\infty$ and $s>\frac{2p-2-4q}{3}>p-q-1$, from \eqref{zj1} and Lemma \ref{lemma2.1} we have  $\lim_{t\rightarrow+\infty}\|x^*\|^2-\|x(t)\|^2=0$.
Using \eqref{4ypa}, \eqref{z15} and  the fact $4q+3s-2p+2<1+s+q-p$, we obtain
		$$\mathcal{L}(x(t),\lambda^*)-\mathcal{L}(x^*,\lambda^*)\leq\mathcal{O}\left(\frac{1}{t^{p}}+\frac{1}{t^{4q+3s-2p+2}}\right).$$ 
		This together with \eqref{z16} implies 
		$$|f(x(t))-f(x^*)|\leq\mathcal{O}\left(\frac{1}{t^{p}}+\frac{1}{t^{4q+3s-2p+2}}\right).$$
		
		{\bf (ii)} If $\frac{p-3q-1}{2}\leq s<1-3q$, then
		\begin{eqnarray*}
			\begin{cases}
				q-2<2q+s-1,\\
				p-q-s-2\leq2q+s-1.
			\end{cases}
		\end{eqnarray*} 
		Consequently, there exists $\widehat{C}_1>0$ such that
		$$\frac{\theta p\|y^*\|^2}{2}\left(a_1pt^{q-2}+(\alpha a_2+a_3)pt^{p-q-s-2}+(p+c)\theta t^{2q+s-1}\right)\leq \widehat{C}_1t^{2q+s-1}, \quad \forall t\geq t_2.$$
		This combined with \eqref{z11} yields 
		\begin{eqnarray*}
			\dot{\mathcal{E}}(t)+\frac{K}{t^r}\mathcal{E}(t)
			&\leq&\widehat{C}_1 t^{2q+s-1}, \quad \forall t\geq t_2,
		\end{eqnarray*}	
		where $r=\max\{q, p-q-s\}\in[0, 1)$.
		By   similar arguments as in proving \eqref{z12}, we  have 
		\begin{eqnarray*}
			\mathcal{E}(t)
			\leq\widehat{C}_4t^{2q+s-1+r}, \quad \forall t\geq \hat{t}_4,
		\end{eqnarray*}	
where $\widehat{C}_4>0$ is a constant and $\hat{t}_4\geq t_2$.
This together with \eqref{z8} implies 
		\begin{eqnarray}\label{fz13}
			\mathcal{L}_{t}(x(t),\lambda_t)-\mathcal{L}_{t}(x_t,\lambda_t)\leq\mathcal{O}\left(\frac{1}{t^{1-r}}\right),
		\end{eqnarray}	
		\begin{eqnarray}\label{zj2}
		\|x(t)-x_t\|^2\leq\mathcal{O}\left(\frac{1}{t^{1-2q-s-r}}\right), \quad\|\lambda(t)-\lambda_t\|^2\leq\mathcal{O}\left(\frac{1}{t^{1-2q-s-r}}\right),
    	\end{eqnarray}	
		$$\|\dot{x}(t)\|^2\leq\mathcal{O}\left(\frac{1}{t^{1-s-r}}\right).$$
By Lemma \ref{lemma2.1} and using  \eqref{zj2}, we have  
\begin{equation}\label{zhfa2}
\lim_{t\rightarrow+\infty}\|x^*\|^2-\|x(t)\|^2=0.
\end{equation}

		Using \eqref{zd1} and \eqref{zj2}, we  obtain 
		\begin{eqnarray}\label{zhfa1}
			\|Ax(t)-b\|\leq\mathcal{O}\left(\frac{1}{t^{p}}+\frac{1}{t^{(1-2q-s-r)/2}}\right).
		\end{eqnarray}
Since $r=\max\{q, p-q-s\}$, $\frac{p-3q-1}{2}\leq s<1-3q$ and $0<p<1-q$, we have
       \begin{eqnarray*}
       	       		\min\{(1-2q-s-r)/2,p\} \leq1-r.       	   
       \end{eqnarray*} 
It follows from  \eqref{z15}, \eqref{fz13}, \eqref{zhfa2}, and \eqref{zhfa1} that

$$\mathcal{L}(x(t),\lambda^*)-\mathcal{L}(x^*,\lambda^*)\leq\mathcal{O}\left(\frac{1}{t^{p}}+\frac{1}{t^{(1-2q-s-r)/2}}\right).$$     
Using \eqref{z16} again, we have
$$|f(x(t))-f(x^*)|\leq\mathcal{O}\left(\frac{1}{t^{p}}+\frac{1}{t^{(1-2q-s-r)/2}}\right).$$

	Summarizing (i) and (ii), we have
	$$\lim_{t\rightarrow+\infty}\|(x(t), \lambda(t))-(x^*,\lambda^*)\|=0.$$
	\end{proof}
	
When  $p-2q<s<1-3q$, we can improve the convergence rates obtained in  $(ii)$ of Theorem \ref{ztt3.1}.
	\begin{theorem}\label{ztt3.3}
		Suppose that Assumption \ref{AS-F} holds and  $p-2q<s<1-3q$. Let $(x(t),\lambda(t))$ be a solution of \eqref{z2} and $(x^*,\lambda^*)$ be the minimum norm element of  $\Omega$. Then,  the following conclusions hold:
		$$\|x(t)-x_t\|^2\leq\mathcal{O}\left(\frac{1}{t^{1-(p+q)}}\right),\quad\mathcal{L}(x(t),\lambda^*)-\mathcal{L}(x^*,\lambda^*)\leq\mathcal{O}\left(\frac{1}{t^{p}}+\frac{1}{t^{(1-(p+q))/2}}\right),$$
		$$|f(x(t))-f(x^*)|\leq\mathcal{O}\left(\frac{1}{t^{p}}+\frac{1}{t^{(1-(p+q))/2}}\right), \quad \|Ax(t)-b\|\leq\mathcal{O}\left(\frac{1}{t^{p}}+\frac{1}{t^{(1-(p+q))/2}}\right).$$
	\end{theorem}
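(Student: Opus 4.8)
The plan is to exploit the fact that the extra hypothesis $p-2q<s$ forces the exponent $r=\max\{q,p-q-s\}$ appearing in Theorem \ref{ztt3.1}(ii) to collapse to $r=q$, and then to estimate $\|x(t)-x_t\|^2$ through the \emph{strong convexity} of $\mathcal{L}_t(\cdot,\lambda_t)$ rather than directly through the Lyapunov energy $\mathcal{E}$. This trades a factor $t^p$ against the already-established Lagrangian-gap rate and produces a strictly sharper exponent whenever $2q+s-p>0$, which is exactly the new assumption.

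First I would record two elementary parameter facts. Since $s>p-2q$ yields $p-q-s<q$, we get $r=\max\{q,p-q-s\}=q$. Moreover $p-2q>\frac{p-3q-1}{2}$ (equivalently $p-q+1>0$, which holds because $p>0$ and $q<1$), and $p+q<1$ by Assumption \ref{AS-F}; hence the admissible range $p-2q<s<1-3q$ lies inside the range $\frac{p-3q-1}{2}\le s<1-3q$ of case (ii) of Theorem \ref{ztt3.1}. Consequently estimate \eqref{fz13} is available and, with $r=q$, reads $\mathcal{L}_{t}(x(t),\lambda_t)-\mathcal{L}_{t}(x_t,\lambda_t)\le\mathcal{O}\!\left(1/t^{1-q}\right)$.

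The key step is the strong-convexity shortcut. Because $\mathcal{L}_t(\cdot,\lambda_t)$ is $\frac{c}{t^p}$-strongly convex with minimizer $x_t$ (recall $\nabla_x\mathcal{L}_t(x_t,\lambda_t)=0$ from \eqref{z4}), one has $\frac{c}{2t^p}\|x(t)-x_t\|^2\le\mathcal{L}_t(x(t),\lambda_t)-\mathcal{L}_t(x_t,\lambda_t)$. Combining this with the Lagrangian-gap rate above gives $\|x(t)-x_t\|^2\le\frac{2t^p}{c}\,\mathcal{O}\!\left(1/t^{1-q}\right)=\mathcal{O}\!\left(1/t^{1-(p+q)}\right)$, the first assertion. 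I would point out that this beats the bound $\mathcal{O}(1/t^{1-2q-s-r})$ of Theorem \ref{ztt3.1}(ii) precisely because $2q+s-p>0$ under $s>p-2q$; this sign check is the only genuinely substantive point, and it is the main (minor) obstacle in the argument.

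Finally the remaining three rates follow by feeding the improved estimate into inequalities already established in the proof of Theorem \ref{ztt3.1}. Inserting $\|x(t)-x_t\|=\mathcal{O}(1/t^{(1-(p+q))/2})$ into \eqref{zd1} yields $\|Ax(t)-b\|\le\mathcal{O}\!\left(1/t^{p}+1/t^{(1-(p+q))/2}\right)$. Then \eqref{z15}, together with the boundedness of $\|\lambda_t-\lambda^*\|$, the vanishing of $\|x^*\|^2-\|x(t)\|^2$, and the observation that $\mathcal{O}(1/t^{1-q})$ is dominated by $\mathcal{O}(1/t^{(1-(p+q))/2})$ (since $1-q>\frac{1-(p+q)}{2}$, i.e. $p-q+1>0$), gives the stated bound on $\mathcal{L}(x(t),\lambda^*)-\mathcal{L}(x^*,\lambda^*)$; and \eqref{z16} transfers it to $|f(x(t))-f(x^*)|$. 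Everything after the strong-convexity step is routine bookkeeping with the constants.
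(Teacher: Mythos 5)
Your proposal is correct and follows essentially the same route as the paper's own proof: identify that $s>p-2q$ forces $r=\max\{q,p-q-s\}=q$, invoke case (ii) of Theorem \ref{ztt3.1} to get $\mathcal{L}_{t}(x(t),\lambda_t)-\mathcal{L}_{t}(x_t,\lambda_t)\leq\mathcal{O}(1/t^{1-q})$, apply the $\frac{c}{t^p}$-strong convexity of $\mathcal{L}_t(\cdot,\lambda_t)$ at its minimizer $x_t$ to obtain $\|x(t)-x_t\|^2\leq\mathcal{O}(1/t^{1-(p+q)})$, and then propagate this through \eqref{zd1}, \eqref{z15} and \eqref{z16}. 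Your explicit verification that the hypothesis range lies inside case (ii) and that $1-q>(1-(p+q))/2$ merely spells out details the paper leaves implicit.
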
 
	\begin{proof}
 By Assumption \ref{AS-F} and  $p-2q<s<1-3q$, we have $\frac{p-3q-1}{2}<s<1-3q$ and $r=\max\{q, p-q-s\}=q$. By $(ii)$ of Theorem \ref{ztt3.1},  
	\begin{equation}\label{zhfb1}
\mathcal{L}_{t}(x(t),\lambda_t)-\mathcal{L}_{t}(x_t,\lambda_t)\leq\mathcal{O}\left(\frac{1}{t^{1-q}}\right).
\end{equation}
	Since $\mathcal{L}_{t}(\cdot, \lambda_t)$ is $\frac{c}{t^p}$-strongly convex, it follows from \eqref{z4} that
	\begin{eqnarray*}
		\mathcal{L}_{t}(x(t),\lambda_t)-\mathcal{L}_{t}(x_t,\lambda_t)\geq\frac{c}{2t^p}\|x(t)-x_t\|^2.
	\end{eqnarray*}	
	This together with \eqref{zhfb1} yields 
	\begin{eqnarray}\label{zj3}
		\|x(t)-x_t\|^2\leq\mathcal{O}\left(\frac{1}{t^{1-(p+q)}}\right).
	\end{eqnarray}	
	Combining \eqref{zj3} and \eqref{zd1}, we get
	$$\|Ax(t)-b\|\leq\mathcal{O}\left(\frac{1}{t^p}+\frac{1}{t^{(1-(q+p))/2}}\right).$$

By similar arguments as in Theorem  Theorem \ref{ztt3.1}, we have
	$$\mathcal{L}(x(t),\lambda^*)-\mathcal{L}(x^*,\lambda^*)\leq\mathcal{O}\left(\frac{1}{t^p}+\frac{1}{t^{(1-(q+p))/2}}\right)$$
	and
	$$|f(x(t))-f(x^*)|\leq\mathcal{O}\left(\frac{1}{t^p}+\frac{1}{t^{(1-(q+p))/2}}\right).$$
	\end{proof}
	
	\begin{remark}\label{remark3}

When  Assumption \ref{AS-F} holds and  $p-2q<s<1-3q$,  it is easy to verify that 
$$1-(p+q)>1-2q-s-r.$$
Therefore,   Theorem \ref{ztt3.3}  improves  $(ii)$ of Theorem \ref{ztt3.1} when $p-2q<s<1-3q$.
	\end{remark}

	In Theorem \ref{ztt3.1}, we not only prove  the strong convergence of the trajectory of \eqref{z2} to the minimal norm element of $\Omega$, but also  establish the convergence rates of the primal-dual gap, the objective residual, the feasibility violation. Next, by  using the approaches  in  \cite{HeHFiietal(2022), HeTLFconver(2023),ChbaniRBOn(2024)}, we can improve these rates  under  suitable choices of the parameters $q$, $p$ and $s$. Before doing this, we first give  a lemma.
	
	\begin{lemma}\label{lemma3.2}
		Assume that $0\leq q<1$, $0<p<1-q$ and $p-q-1<s$. Let $(x,\lambda): [t_{0},+\infty)\rightarrow \mathcal{X}\times\mathcal{Y}$ be a solution of \eqref{z2}. If  $(\lambda(t))_{t\geq t_0}$ is bounded, then for every $T\geq t_0>0$ there exists a constant $\widetilde{C}_{T}\geq0$ such that
		\begin{eqnarray*}
			\left\|\theta t^{2q+s}(Ax(t)-b)+\frac{1}{h(t)}\int_{T}^{t}\theta \tau^{2q+s}(Ax(\tau)-b)V(\tau)d\tau\right\|\leq\widetilde{C}_{T}, \quad \forall t\geq T,
		\end{eqnarray*}	
		where $V(t)=\left(\frac{t^{-q}}{\theta}-(2q+s)t^{-1}-ct^{-p+q+s}\right)h(t)$ and $h(t)=e^{\frac{c}{1-(p-q-s)}t^{1-(p-q-s)}}$.
	\end{lemma}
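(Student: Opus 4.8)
The plan is to reduce the stated inequality to an exact-derivative identity for the weighted feasibility residual $\phi(t):=\theta t^{2q+s}(Ax(t)-b)$, which is precisely the quantity sitting inside the norm. First I would read off the dynamics of the feasibility residual $w(t):=Ax(t)-b$ from the second line of \eqref{z2}. Since $\nabla_\lambda\mathcal{L}(x,\lambda)=Ax-b$, the dual equation becomes $\dot\lambda(t)=t^{q+s}\bigl(w(t)+\theta t^q A\dot x(t)\bigr)-ct^{q+s-p}\lambda(t)$, and because $A\dot x(t)=\dot w(t)$ this rearranges to the clean relation $\theta t^{2q+s}\dot w(t)=\dot\lambda(t)-t^{q+s}w(t)+ct^{q+s-p}\lambda(t)$.

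Next I would differentiate $\phi$ directly, using $\dot\phi=\theta(2q+s)t^{2q+s-1}w+\theta t^{2q+s}\dot w$, substituting the identity above, and eliminating $w$ via $w=\phi/(\theta t^{2q+s})$. This yields the first-order linear relation
\[
\dot\phi(t)+\Bigl(\tfrac{t^{-q}}{\theta}-(2q+s)t^{-1}\Bigr)\phi(t)=\dot\lambda(t)+ct^{q+s-p}\lambda(t).
\]
The crucial observation is that $h$ is tailor-made as an integrating factor: since $1-(p-q-s)=1-p+q+s$, one has $\dot h(t)=ct^{q+s-p}h(t)$, so the right-hand side equals $\tfrac{1}{h}\tfrac{d}{dt}\bigl(h(t)\lambda(t)\bigr)$, absorbing the $\lambda$-coupling into a total derivative. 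Multiplying the left-hand side by $h$ and using $h\dot\phi=\tfrac{d}{dt}(h\phi)-ct^{q+s-p}h\phi$ converts it into $\tfrac{d}{dt}(h\phi)+V\phi$, where the extra $-ct^{q+s-p}$ generated by the integrating factor is exactly what completes the coefficient to $V(t)$. Hence the whole relation collapses to the exact identity $\tfrac{d}{dt}\bigl(h(t)\phi(t)\bigr)+V(t)\phi(t)=\tfrac{d}{dt}\bigl(h(t)\lambda(t)\bigr)$.

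Finally I would integrate this identity from $T$ to $t$, getting $h(t)\phi(t)+\int_T^t V(\tau)\phi(\tau)\,d\tau=h(t)\lambda(t)-h(T)\lambda(T)+h(T)\phi(T)$, and divide by $h(t)$ to isolate precisely the bracketed expression in the statement. Because $\dot h>0$ forces $h(t)\ge h(T)>0$, the weight $h(T)/h(t)\le 1$; combined with the assumed boundedness of $\lambda(\cdot)$ this gives the bound with $\widetilde C_T=\sup_{t\ge T}\|\lambda(t)\|+\|\phi(T)\|+\|\lambda(T)\|$. I expect the only genuine obstacle to be bookkeeping rather than conceptual: one must track signs carefully so that the residual coefficient of $\phi$ after applying the integrating factor matches $V(t)$ down to the final $-ct^{-p+q+s}$ term, and confirm that the $\lambda$-dependence folds exactly into $\tfrac{d}{dt}(h\lambda)$. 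Everything after the exact identity is a routine integration and a triangle-inequality estimate.
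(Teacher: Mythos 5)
Your proof is correct and follows essentially the same route as the paper: both exploit the integrating factor $h$ with $\dot h(t)=ct^{q+s-p}h(t)$ applied to the dual equation of \eqref{z2}, arrive at the same integrated identity relating $\theta t^{2q+s}(Ax(t)-b)$, the $V$-weighted integral, and $h(t)\lambda(t)$, and conclude from the boundedness of $\lambda(\cdot)$ together with the monotonicity of $h$. The only difference is organizational — the paper integrates $\frac{d}{dt}(h\lambda)$ and then integrates by parts, whereas you form the linear ODE for $\phi=\theta t^{2q+s}(Ax-b)$ and apply the integrating factor before integrating — which is the same computation in a different order.
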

	
	\begin{proof}
		Using \eqref{z2} and \eqref{z3}, we have 	
		\begin{eqnarray*}
			\dot{\lambda}(t)+ct^{q-p+s}\lambda(t)=t^{q+s}(Ax(t)-b)+\theta t^{2q+s}A\dot{x}(t), \quad \forall t\geq t_0.
		\end{eqnarray*}	
		Let $h(t)=e^{\frac{c}{1-(p-q-s)}t^{1-(p-q-s)}}$. Then,
		\begin{eqnarray*}
			\frac{d}{dt}\left(h(t)\lambda(t)\right)=h(t)t^{q+s}(Ax(t)-b) +h(t)\theta t^{2q+s}A\dot{x}(t), \quad \forall t\geq t_0.
		\end{eqnarray*}	
		Given $T\geq t_0>0$, integrating the above equality from $T$ to $t$ gives
		\begin{eqnarray*}
			h(t)\lambda(t)&=&h(T)\lambda(T)+\int_{T}^{t}h(\tau)\tau^{q+s}(Ax(\tau)-b)d\tau+\int_{T}^{t}h(\tau)\theta \tau^{2q+s}d(Ax(\tau)-b)\\
			&=&h(T)\lambda(T)+\int_{T}^{t}h(\tau)\tau^{q+s}(Ax(\tau)-b)d\tau+h(t)\theta t^{2q+s}(Ax(t)-b)\\
			&&-h(T)\theta T^{2q+s}(Ax(T)-b)-\int_{T}^{t}\theta (Ax(\tau)-b)(2q+s)\tau^{2q+s-1}h(\tau)d\tau\\
			&&-\int_{T}^{t}\theta (Ax(\tau)-b)\tau^{2q+s}\dot{h}(\tau)d\tau\\
			&=&h(T)\lambda(T)-h(T)\theta T^{2q+s}(Ax(T)-b)+h(t)\theta t^{2q+s}(Ax(t)-b)\\
			&&+\int_{T}^{t}\theta \tau^{2q+s}(Ax(\tau)-b)\left(\frac{\tau^{-q}}{\theta}-(2q+s)\tau^{-1}-c\tau^{q+s-p}\right)h(\tau)d\tau, \quad \forall t\geq T,
		\end{eqnarray*}	
		where the last equality uses $\dot{h}(\tau)=ct^{q+s-p}h(\tau)$. This yields
		\begin{eqnarray*}
			\lambda(t)
			&=&\frac{h(T)\lambda(T)-h(T)\theta T^{2q+s}(Ax(T)-b)}{h(t)}+\theta t^{2q+s}(Ax(t)-b)\\
			&&+\frac{1}{h(t)}\int_{T}^{t}\theta \tau^{2q+s}(Ax(\tau)-b)\left(\frac{\tau^{-q}}{\theta}-(2q+s)\tau^{-1}-c\tau^{q+s-p}\right)h(\tau)d\tau, \quad \forall t\geq T.
		\end{eqnarray*}	
		Let $V(t)=\left(\frac{t^{-q}}{\theta}-(2q+s)t^{-1}-ct^{q+s-p}\right) h(t)$. Since $\lim_{t\rightarrow+\infty}h(t)=+\infty$ and $(\lambda(t))_{t\geq t_0}$ is bounded, there exists a constant $\widetilde{C}_{T}>0$ such that
		\begin{eqnarray*}
			\left\|\theta t^{2q+s}(Ax(t)-b)+\frac{1}{h(t)}\int_{T}^{t}\theta \tau^{2q+s}(Ax(\tau)-b)V(\tau)d\tau\right\|\leq\widetilde{C}_{T}, \quad \forall t\geq T.
		\end{eqnarray*}	
	\end{proof}

Next, we apply Lemma  \ref{lemma2.2} and Lemma  \ref{lemma2.3} to establish the convergence rate   $\mathcal{O}\left(\frac{1}{t^{2q+s}}\right)$ of the primal-dual gap, the objective residual, and the feasibility violation along the trajectory of  \eqref{z2} when  $-2q<s\leq p-2q$. 
	
	\begin{theorem}\label{ztt3.2}
	Suppose that Assumption \ref{AS-F} holds and $-2q<s\leq p-2q$.
	Let $(x(t),\lambda(t))$ be a solution of \eqref{z2} and $(x^*,\lambda^*)$ be the minimum norm element of $\Omega$. Then, as $t\rightarrow+\infty$ it holds
	$$\mathcal{L}(x(t),\lambda^*)-\mathcal{L}(x^*,\lambda^*)\leq\mathcal{O}\left(\frac{1}{t^{2q+s}}\right),$$
	$$|f(x(t))-f(x^*)|\leq\mathcal{O}\left(\frac{1}{t^{2q+s}}\right), \quad \|Ax(t)-b\|\leq\mathcal{O}\left(\frac{1}{t^{2q+s}}\right).$$
   \end{theorem}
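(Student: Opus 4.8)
The plan is to prove the feasibility estimate $\|Ax(t)-b\|=\mathcal{O}(1/t^{2q+s})$ first, and then read off the primal-dual gap and objective-residual rates from the already-available inequalities \eqref{z15} and \eqref{z16}. Before anything else I would record that the hypothesis $-2q<s\le p-2q$ lies strictly inside the interval $(p-q-1,\,1-3q)$ governing Theorem \ref{ztt3.1}: indeed $-2q-(p-q-1)=1-p-q>0$ and $(p-2q)-(1-3q)=p+q-1<0$ under Assumption \ref{AS-F}. Hence Theorem \ref{ztt3.1} applies and gives $\|(x(t),\lambda(t))-(x^*,\lambda^*)\|\to0$; in particular $(\lambda(t))_{t\ge t_0}$ is bounded, which is exactly the hypothesis required to invoke Lemma \ref{lemma3.2}.

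Next I would set $g(t)=\theta t^{2q+s}(Ax(t)-b)$ and interpret the conclusion of Lemma \ref{lemma3.2} as the boundedness hypothesis \eqref{zfe2}/\eqref{zfe3} of Lemmas \ref{lemma2.2}/\ref{lemma2.3}, taking $a(t)=V(t)$, $\mu=\frac{c}{1-(p-q-s)}$ and $\nu=1-(p-q-s)$, so that $e^{-\mu t^{\nu}}=1/h(t)$ and the integrand $\theta\tau^{2q+s}(Ax(\tau)-b)V(\tau)$ is precisely $a(\tau)g(\tau)$. Both $g$ and $V$ are continuously differentiable, and $\nu>0$ because $0\le p-q-s<p+q<1$. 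Once $\sup_t\|g(t)\|<+\infty$ is certified through one of the two lemmas, the feasibility rate follows at once from $\|Ax(t)-b\|\le \sup_t\|g(t)\|/(\theta t^{2q+s})$.

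I expect the main obstacle to be the sign analysis of $V$ and the ensuing choice of lemma. The coefficient of $h(t)$ in $V$ is $\frac{1}{\theta}t^{-q}-(2q+s)t^{-1}-ct^{q+s-p}$, and $s\le p-2q$ forces $q+s-p\le-q$, so the third exponent never exceeds $-q$. Thus if $s<p-2q$, or $s=p-2q$ with $c<1/\theta$, the $t^{-q}$ term has positive coefficient and $V(t)\ge0$ for $t$ large; I would then fix $T$ so large that $V\ge0$ on $[T,+\infty)$ and apply Lemma \ref{lemma2.2}. If instead $s=p-2q$ with $c\ge1/\theta$, then $V(t)\le0$ eventually and I would apply Lemma \ref{lemma2.3}. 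The delicate point in this second case is verifying the lower bound \eqref{zfe4}, i.e. $\frac{1}{h(t)}\int_T^t V(\tau)\,d\tau\ge C_0$ for some $C_0\in(-1,0)$. Here I would use $\dot h(\tau)=c\tau^{q+s-p}h(\tau)$ to write $V=\frac{1}{\theta}\tau^{-q}h-(2q+s)\tau^{-1}h-\dot h$ and integrate by parts; a Laplace-type estimate giving $\frac{1}{h(t)}\int_T^t\tau^{-q}h\,d\tau\sim \frac{t^{p-2q-s}}{c}$ and $\frac{1}{h(t)}\int_T^t\tau^{-1}h\,d\tau\to0$ yields, at $s=p-2q$, that $\frac{1}{h(t)}\int_T^t V\,d\tau\to\frac{1}{\theta c}-1\in(-1,0]$, from which a constant $C_0\in(-1,0)$ can be extracted after enlarging $T$. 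Either branch produces $\sup_t\|g(t)\|<+\infty$, hence $\|Ax(t)-b\|=\mathcal{O}(1/t^{2q+s})$.

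Finally I would insert the feasibility rate into \eqref{z15}. Since $s>\frac{p-3q-1}{2}$ and $s\le p-2q$ force $r=\max\{q,p-q-s\}=p-q-s$, case (ii) of Theorem \ref{ztt3.1} gives $\mathcal{L}_{t}(x(t),\lambda_t)-\mathcal{L}_{t}(x_t,\lambda_t)=\mathcal{O}(1/t^{1-(p-q-s)})$, which decays faster than $1/t^{2q+s}$ because $(1-p+q+s)-(2q+s)=1-p-q>0$. The cross term $\|\lambda_t-\lambda^*\|\cdot\|Ax(t)-b\|$ is itself $\mathcal{O}(1/t^{2q+s})$ since $\lambda_t\to\lambda^*$ by Lemma \ref{lemma2.1}, and the Tikhonov term $\frac{c}{2t^{p}}(\|x^*\|^2-\|x(t)\|^2)$ is $\mathcal{O}(1/t^{p})\subseteq\mathcal{O}(1/t^{2q+s})$ because $2q+s\le p$. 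Summing in \eqref{z15} yields $\mathcal{L}(x(t),\lambda^*)-\mathcal{L}(x^*,\lambda^*)=\mathcal{O}(1/t^{2q+s})$, and \eqref{z16} combined with the feasibility rate then gives $|f(x(t))-f(x^*)|=\mathcal{O}(1/t^{2q+s})$, completing the argument.
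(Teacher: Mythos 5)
Your proposal is correct and follows essentially the same route as the paper's own proof: Theorem \ref{ztt3.1} gives boundedness of $\lambda(t)$ so that Lemma \ref{lemma3.2} applies, the sign of $V(t)$ dictates whether Lemma \ref{lemma2.2} (when $s<p-2q$, or $s=p-2q$ with $c<1/\theta$) or Lemma \ref{lemma2.3} (when $s=p-2q$ with $c\geq 1/\theta$) is invoked for $g(t)=\theta t^{2q+s}(Ax(t)-b)$, and the primal-dual gap and objective rates then follow from \eqref{z15}, \eqref{z16} and the $\mathcal{O}\left(1/t^{1-p+q+s}\right)$ estimate of Theorem \ref{ztt3.1}(ii). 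The only difference is cosmetic: in the delicate subcase the paper certifies $C_0=-\left(1-\frac{1}{2c\theta}\right)\in(-1,0)$ by absorbing the $-p\tau^{-1}$ term pointwise via $\frac{1}{\theta}\tau^{-q}-p\tau^{-1}\geq\frac{1}{2\theta}\tau^{-q}$, whereas you reach a constant near $\frac{1}{\theta c}-1$ by integrating $\dot{h}=c\tau^{-q}h$ exactly and enlarging $T$; both yield the uniform lower bound \eqref{zfe4} required by Lemma \ref{lemma2.3}.
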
 
   \begin{proof}

By  Theorem \ref{ztt3.1}, $\lim_{t\rightarrow+\infty}\|\lambda(t)-\lambda^*\|=0$. Therefore, the conclusion of Lemma  \ref{lemma3.2} holds. Consequently,  for any $T\geq t_0>0$ there exists a constant $\widetilde{C}_{T}\geq0$ such that
		\begin{eqnarray}\label{zhfa3}
			\left\|\theta t^{2q+s}(Ax(t)-b)+\frac{1}{h(t)}\int_{T}^{t}\theta \tau^{2q+s}(Ax(\tau)-b)V(\tau)d\tau\right\|\leq\widetilde{C}_{T}, \quad \forall t\geq T,
		\end{eqnarray}	
		where 
$$h(t)=e^{\frac{c}{1-(p-q-s)}t^{1-(p-q-s)}}$$
and $$V(t)=\left(\frac{t^{-q}}{\theta}-(2q+s)t^{-1}-ct^{-p+q+s}\right)h(t).$$

Next, we will analyze separately the following two situations.
	
	{\bf Case I:} $-2q<s<p-2q$. In this case, we have $-p+q+s<-q$ and $2q+s>0$.  Then, there exists $\tilde{t}_1\geq \max\{t_0,1\}$ such that 
	$$V(t)\geq0, \quad \forall t\geq\tilde{t}_1.$$
Let
 $$\delta=\tilde{t}_1,\quad \mu=\frac{c}{1-(p-q-s)}>0,\quad \nu=1-(p-q-s)>0,$$
$$g(t)=\theta t^{2q+s}(Ax(\tau)-b)\quad\text{ and }\quad a(t)=V(t).$$
Applying Lemma \ref{lemma2.2} to \eqref{zhfa3} with  $T=\tilde{t}_1$,
	we have
	$$\sup_{t\geq \tilde{t}_1}\|\theta t^{2q+s}(Ax(t)-b)\|<+\infty,$$
	which means 
	\begin{eqnarray*}
		\|Ax(t)-b\|\leq\mathcal{O}\left(\frac{1}{t^{2q+s}}\right).
	\end{eqnarray*}	
	 
	{\bf Case II:} $s=p-2q$. In this case, we have $-p+q+s=-q$, and so
	\begin{eqnarray}\label{z17}
		V(t)=\left(\left(\frac{1}{\theta}-c\right)t^{-q}-pt^{-1}\right)h(t),
	\end{eqnarray}	
	where 
$$h(t)=e^{\frac{c}{1-q}t^{1-q}}.$$
According to the sign of $\frac{1}{\theta}-c$, we analyze separately the following two subcases.
	
	{\bf Subcase I:} $\frac{1}{\theta}-c>0$. In this subcase, there exists $\tilde{t}_2\geq \max\{t_0,1\}$ such that 
	$$V(t)\geq0, \quad \forall t\geq \tilde{t}_2.$$ 
	Using again Lemma \ref{lemma2.2} with $\delta=\tilde{t}_2$, $\mu=\frac{c}{1-q}>0$, $\nu=1-q>0$, $g(t)=\theta t^{p}(Ax(t)-b)$ and $a(t)=V(t)$  to \eqref{z17} with  $T=\tilde{t}_2$,
	we get 
	\begin{eqnarray}\label{z18}
		\|Ax(t)-b\|\leq\mathcal{O}\left(\frac{1}{t^{p}}\right)=\mathcal{O}\left(\frac{1}{t^{2q+s}}\right).
	\end{eqnarray}	
	
	{\bf Subcase II:}  $\frac{1}{\theta}-c\leq0$. In this subcase, 
	$$V(t)\leq0,  \quad \forall t\geq t_0.$$ 
	Since  $0\leq q<1$,  there exists a constant $\tilde{t}_3\geq \max\{t_0,1\}$ such that 
	$$\frac{1}{\theta}t^{-q}-pt^{-1}\geq\frac{1}{2\theta}t^{-q}, \quad \forall t\geq\tilde{t}_3.$$
It follows from \eqref{z17} that
	\begin{eqnarray*}
		\frac{1}{h(t)}\int_{\tilde{t}_3}^{t}V(\tau)d\tau&=&\frac{1}{h(t)}\int_{\tilde{t}_3}^{t}\left(\left(\frac{1}{\theta}-c\right)\tau^{-q}-p\tau^{-1}\right)h(\tau)d\tau\\
		&\geq&\frac{1}{h(t)}\left(\frac{1}{2c\theta}\int_{\tilde{t}_3}^{t}c\tau^{-q}h(\tau)d\tau-\int_{\tilde{t}_3}^{t}c\tau^{-q}h(\tau)d\tau\right)\\
		&=&-\left(1-\frac{1}{2c\theta}\right)\frac{1}{h(t)}\int_{\tilde{t}_3}^{t}\dot{h}(\tau)d\tau\\
		&=&-\left(1-\frac{1}{2c\theta}\right)\left(1-\frac{h(\tilde{t}_3)}{h(t)}\right)\\
		&=&-\left(1-\frac{1}{2c\theta}\right)+\left(1-\frac{1}{2c\theta}\right)\frac{h(\tilde{t}_3)}{h(t)}\\
		&\geq&-\left(1-\frac{1}{2c\theta}\right)>-1.
	\end{eqnarray*}	
Let
$$\delta=\tilde{t}_3,\quad \mu=\frac{c}{1-q},\quad \nu=1-q>0, \quad C_0=-\left(1-\frac{1}{2c\theta}\right),$$
$$g(t)=\theta  t^{p}(Ax(t)-b),\quad\text{ and } \quad a(t)=V(t).$$
	Applying Lemma \ref{lemma2.3}  to \eqref{zhfa3} with  $T=\tilde{t}_3$,
	we get
	$$\sup_{t\geq \tilde{t}_3}\|\theta t^{p}(Ax(t)-b)\|<+\infty,$$
	which together with $s=p-2q$ yields 
	\begin{eqnarray*}
		\|Ax(t)-b\|\leq\mathcal{O}\left(\frac{1}{t^{2q+s}}\right).
	\end{eqnarray*}	
	
	Summarizing  Case I and Case II, we have for any $-2q<s\leq p-2q$
	\begin{eqnarray}\label{z19}
		\|Ax(t)-b\|\leq\mathcal{O}\left(\frac{1}{t^{2q+s}}\right).
	\end{eqnarray}	
By $(ii)$ of Theorem \ref{ztt3.1}, we get 
	\begin{equation}\label{zhfa4}
\mathcal{L}_{t}(x(t),\lambda_t)-\mathcal{L}_{t}(x_t,\lambda_t)\leq\mathcal{O}\left(\frac{1}{t^{1-p+q+s}}\right).
\end{equation}
Since $0<p<1-q$ and $-2q<s\leq p-2q$, we have $2q+s<1-p+q+s$ and $2q+s\leq p$. It follows from \eqref{z15}, \eqref{z19} and \eqref{zhfa4} that
	$$\mathcal{L}(x(t),\lambda^*)-\mathcal{L}(x^*,\lambda^*)\leq\mathcal{O}\left(\frac{1}{t^{2q+s}}\right).$$
This together with \eqref{z16} and \eqref{z19} yields
	$$|f(x(t))-f(x^*)|\leq\mathcal{O}\left(\frac{1}{t^{2q+s}}\right).$$
   \end{proof}
	
\begin{remark}\label{remark2}
	When $0\leq q <1$, $\frac{1-q}{3}<p<1-q$ and $\frac{1-p-5q}{2}<s\leq p-2q$, it is easy to verify that
	$$-2q<s\leq p-2q, \quad \frac{p-3q-1}{2}<s<1-3q, \quad 2q+s>\min\{p, \frac{1-2q-s-r}{2}\}.$$
	Therefore, the convergence rate $\mathcal{O}\left(\frac{1}{t^{2q+s}}\right)$ of the primal-dual gap, the objective residual, the feasibility violation in Theorem \ref{ztt3.2} improves the convergence rate $\mathcal{O}\left(\frac{1}{t^{p}}+\frac{1}{t^{(1-2q-s-r)/2}}\right)$ established in $(ii)$ of Theorem \ref{ztt3.1} when $\frac{1-q}{3}<p<1-q$ and $\frac{1-p-5q}{2}<s\leq p-2q$.
\end{remark}
	
	By Theorem \ref{ztt3.1} and  Theorem \ref{ztt3.2}, we have the following corollary which improves the results of  Chbani et al. \cite{ChbaniRBOn(2024)}.
	
	\begin{corollary}\label{corollary1}
		Suppose that  Assumption \ref{AS-F} holds with $q=0$ and $p-1<s<1$. Let $(x(t),\lambda(t))$ be a solution of \eqref{z2} and $(x^*,\lambda^*)$ be the minimum norm element of  $\Omega$. Then, 
		$$\lim_{t\rightarrow+\infty}\|(x(t), \lambda(t))-(x^*,\lambda^*)\|=0$$ and  the following conclusions hold:
		\begin{itemize}
			\item[(i)] When $p-1<s<\frac{p-1}{2}$,  it holds
			$$\|x(t)-x_t\|^2\leq\mathcal{O}\left(\frac{1}{t^{2(1+s-p)}}\right), \quad \|\lambda(t)-\lambda_t\|^2\leq\mathcal{O}\left(\frac{1}{t^{2(1+s-p)}}\right),$$ $$\|\dot{x}(t)\|^2\leq\mathcal{O}\left(\frac{1}{t^{2(1+s-p)}}\right), \quad \|Ax(t)-b\|\leq \mathcal{O}\left(\frac{1}{t^p}+\frac{1}{t^{1+s-p}}\right).$$
			Further, if $\frac{2p-2}{3}<s<\frac{p-1}{2}$, then
			$$\mathcal{L}_{t}(x(t),\lambda_t)-\mathcal{L}_{t}(x_t,\lambda_t)\leq\mathcal{O}\left(\frac{1}{t^{3s-2p+2}}\right),$$
			$$\mathcal{L}(x(t),\lambda^*)-\mathcal{L}(x^*,\lambda^*)\leq\mathcal{O}\left(\frac{1}{t^{p}}+\frac{1}{t^{3s-2p+2}}\right),$$ $$|f(x(t))-f(x^*)|\leq\mathcal{O}\left(\frac{1}{t^{p}}+\frac{1}{t^{3s-2p+2}}\right).$$
			\item[(ii)] When $\frac{p-1}{2}\leq s<1$,  it holds
			$$\mathcal{L}_{t}(x(t),\lambda_t)-\mathcal{L}_{t}(x_t,\lambda_t)\leq\mathcal{O}\left(\frac{1}{t^{1-r}}\right), \quad \|x(t)-x_t\|^2\leq\mathcal{O}\left(\frac{1}{t^{1-s-r}}\right),$$ $$\|\lambda(t)-\lambda_t\|^2\leq\mathcal{O}\left(\frac{1}{t^{1-s-r}}\right), \quad \|\dot{x}(t)\|^2\leq\mathcal{O}\left(\frac{1}{t^{1-s-r}}\right).$$
			$$\mathcal{L}(x(t),\lambda^*)-\mathcal{L}(x^*,\lambda^*)\leq\mathcal{O}\left(\frac{1}{t^{p}}+\frac{1}{t^{(1-s-r)/2}}\right),$$
			$$|f(x(t))-f(x^*)|\leq\mathcal{O}\left(\frac{1}{t^{p}}+\frac{1}{t^{(1-s-r)/2}}\right), \quad \|Ax(t)-b\|\leq\mathcal{O}\left(\frac{1}{t^{p}}+\frac{1}{t^{(1-s-r)/2}}\right),$$
			where $r=\max\{0,p-s\}$.
			\item[(iii)] When $0<s\leq p$, it holds
$$\mathcal{L}(x(t),\lambda^*)-\mathcal{L}(x^*,\lambda^*)\leq\mathcal{O}\left(\frac{1}{t^{s}}\right),$$
	$$|f(x(t))-f(x^*)|\leq\mathcal{O}\left(\frac{1}{t^{s}}\right), \quad \|Ax(t)-b\|\leq\mathcal{O}\left(\frac{1}{t^{s}}\right).$$

		\end{itemize}
	\end{corollary}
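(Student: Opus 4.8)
The plan is to derive this corollary as a direct specialization of Theorem \ref{ztt3.1} and Theorem \ref{ztt3.2} to the case $q = 0$. With $q = 0$, Assumption \ref{AS-F} reduces exactly to $\alpha > 0$, $\theta > 1/\alpha$, $0 < p < 1$, $c > 0$, and the hypothesis $p - 1 < s < 1$ coincides with the requirement $p - q - 1 < s < 1 - 3q$ of Theorem \ref{ztt3.1}. First I would invoke Theorem \ref{ztt3.1} with $q = 0$ to obtain the strong convergence $\lim_{t\to+\infty}\|(x(t),\lambda(t)) - (x^*,\lambda^*)\| = 0$; this holds on the entire range $p - 1 < s < 1$ and therefore covers all three cases of the corollary at once.

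For parts (i) and (ii), I would verify that the case splits of Theorem \ref{ztt3.1} transcribe to those of the corollary: $p - q - 1 < s < \tfrac{p-3q-1}{2}$ becomes $p - 1 < s < \tfrac{p-1}{2}$, the range $\tfrac{p-3q-1}{2} \le s < 1 - 3q$ becomes $\tfrac{p-1}{2} \le s < 1$, and the quantity $r = \max\{q, p-q-s\}$ becomes $r = \max\{0, p-s\}$. I would then substitute $q = 0$ into each exponent of Theorem \ref{ztt3.1}: $2(1+s+q-p) \to 2(1+s-p)$, $2(1+s+2q-p) \to 2(1+s-p)$, $1 + s + q - p \to 1 + s - p$, $1 - 2q - s - r \to 1 - s - r$, $4q + 3s - 2p + 2 \to 3s - 2p + 2$, and the refined subrange $\tfrac{2p-2-4q}{3} < s < \tfrac{p-3q-1}{2}$ into $\tfrac{2p-2}{3} < s < \tfrac{p-1}{2}$. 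This reproduces verbatim every estimate listed in (i) and (ii).

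For part (iii), the condition $0 < s \le p$ is precisely $-2q < s \le p - 2q$ evaluated at $q = 0$, so Theorem \ref{ztt3.2} applies and delivers the rate $\mathcal{O}(1/t^{2q+s}) = \mathcal{O}(1/t^{s})$ for the primal-dual gap, the objective residual and the feasibility violation. The one point worth checking is that Theorem \ref{ztt3.2} is genuinely applicable here: its proof invokes Theorem \ref{ztt3.1} (for $\lim_{t\to+\infty}\|\lambda(t) - \lambda^*\| = 0$ and for the intermediate rate of $\mathcal{L}_{t}(x(t),\lambda_t) - \mathcal{L}_{t}(x_t,\lambda_t)$) together with Lemma \ref{lemma3.2}, and all of these require $p - 1 < s < 1$. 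Since $0 < p < 1$ forces $p - 1 < 0$, the range $0 < s \le p$ lies inside $(p - 1, 1)$, so the borrowed hypotheses indeed hold.

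Because every assertion is obtained by substitution from already-established results, there is no real analytic obstacle; the only care needed is bookkeeping — confirming that each parameter subrange of the corollary matches the corresponding subrange of Theorem \ref{ztt3.1} or Theorem \ref{ztt3.2} after setting $q = 0$, and that the simplified exponents agree with those stated. The mildly delicate step is (iii), where one must trace the dependency chain of Theorem \ref{ztt3.2} back to Theorem \ref{ztt3.1} in order to confirm that the strong-convergence and gap estimates it relies upon are available on the range $0 < s \le p$.
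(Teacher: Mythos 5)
Your proposal is correct and follows exactly the paper's approach: the paper proves this corollary in one line by noting that items (i) and (ii) follow directly from Theorem \ref{ztt3.1} and item (iii) from Theorem \ref{ztt3.2} upon setting $q=0$. Your additional bookkeeping (matching the parameter subranges, substituting $q=0$ in the exponents, and checking that $0<s\leq p$ lies inside $(p-1,1)$ so the hypotheses of Theorem \ref{ztt3.2} are available) is precisely the verification the paper leaves implicit.
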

	\begin{proof}
Items (i) and (ii) follow directly from Theorem \ref{ztt3.1}, and item (iii) follows directly from  Theorem \ref{ztt3.2}.
\end{proof}

	\begin{remark}\label{remark4}
When $s=p$,  Corollary \ref{corollary1} recovers  the convergence rate results of  \cite[Theorem 3.2]{ChbaniRBOn(2024)} where the condition
 $\frac{1}{\theta}<\alpha<\frac{1}{\theta}+\min(\frac{1}{\theta}, c)$ and either $\alpha<2\sqrt{c}$ or $2\sqrt{c}<\alpha<\frac{1}{\theta}+c\theta$ was assumed, instead of the condition $\frac{1}{\theta}<\alpha$ used in  Corollary \ref{corollary1}. It is worth mentioning that the proof of  \cite[Theorem 3.2]{ChbaniRBOn(2024)} was based on  \cite[Lemma 2.1]{ChbaniRBOn(2024)} (\cite[Lemma 6]{HeHFiietal(2022)}), which cannot be  applied there since the  function $a(s)$ is dependent on $t$. To fix this, we develop Lemma  \ref{lemma2.2} and Lemma  \ref{lemma2.3} to establish the convergence rate results in  Theorem \ref{ztt3.2}.
	\end{remark}

	%\begin{remark}\label{remark1}
    %Under the conditions $0<q<1$ and $0<p<1+q$, L{\'a}szl{\'o}  proved in \cite[Theorem 10]{Laszlo2023} the strong convergence of the trajetory $x(t)$ of    %$\text{(IGS)}_{q,\epsilon}$ with $\epsilon(t)=\frac{c}{t^p}$ to the minimal norm solution  $\hat{x}^*$ of  problem \eqref{zfg1} in the sense that
% $\lim_{t\rightarrow+\infty}\|x(t)-\hat{x}^*\|=0$, along with convergence rate results of  $ \|f(x(t)-f(\hat{x}^*)$ and $\|\dot{x}(t)\|$.
%	\end{remark}

	%\begin{corollary}\label{corollary4}
	%	Suppose that $s=0$, $0<q<\frac{1}{3}$, $2q\leq p<1-q$ and $\theta>\frac{1}{\alpha}$.
	%	Let $(x,\lambda): [t_{0},+\infty)\rightarrow \mathcal{X}\times\mathcal{Y}$ be a solution of \eqref{z2} and $(x^*,\lambda^*)$ be the minimum norm element of the primal-dual solution set $\Omega$ of problem \eqref{z1}, that is $(x^*,\lambda^*)=Proj_{\Omega}(0, 0)$. Then, as $t\rightarrow+\infty$ it holds
	%	$$\mathcal{L}(x(t),\lambda^*)-\mathcal{L}(x^*,\lambda^*)\leq\mathcal{O}\left(\frac{1}{t^{2q}}\right),$$
	%    $$|f(x(t))-f(x^*)|\leq\mathcal{O}\left(\frac{1}{t^{2q}}\right), \quad \|Ax(t)-b\|\leq\mathcal{O}\left(\frac{1}{t^{2q}}\right).$$
	%\end{corollary}  
	
	\section{Numerical experiments}

	In this section, we perform some numerical experiments to illustrate the theoretical results on our dynamical system \eqref{z2}. All codes are run on a PC (with 2.20GHz Dual-Core Intel Core i7 and 16GB memory) under MATLAB Version R2017b and  all the
dynamical systems are solved numerically by the ode23 in MATLAB.
	
	Consider the linearly constrained convex optimization problem
	\begin{eqnarray}\label{zyc1}
		\min_{x\in\mathbb{R}^3} (x_1-x_2)^2+x_{3}^2, \quad\text{ s.t. }x_1-x_2+x_3-2=0,
	\end{eqnarray}
	where $x=(x_1,x_2,x_3)^T$. Then,  $f(x)= (x_1-x_2)^2+x_{3}^2, A=(1,-1,1)$ and $b=2$. By  means of \eqref{zc1}, it is easy to verify that 
$$\Omega=\{(x,\lambda): x_1-x_2=1, x_3=1,\lambda=-2\},$$
 $x^*=\left(\frac{1}{2},-\frac{1}{2},1\right)^T$ is the minimal norm solution of problem \eqref{zyc1},  $(x^*,-2)$ is the minimal norm element of $\Omega$, and  $f^*=2$ is the optimal objective function value of  problem \eqref{zyc1}. Because $\lambda^*=-2$ is the unique dual solution of problem \eqref{zyc1}, we only display  numerical results on the associated trajectories involving the primal trajectory $x(t)$ in the following  numerical experiments. In what follows, we always take the starting points $x(1)=(1,-1,1)^T$, $\lambda(1)=(1)$, $\dot{x}(1)=(1,1,1)^T$ in our dynamical system \eqref{z2}.

	In the first numerical experiment,  take $\theta=1$, $\alpha=3$, $c=0.1$, $q=0$, $s=p=\{0.2, 0.5, 0.7, 0.9\}$ in the proposed dynamical system \eqref{z2}.  In this setting on the parameters, all assumptions in Theorem \ref{ztt3.2} hold, but the conditions $\frac{1}{\theta}<\alpha<\frac{1}{\theta}+\min(\frac{1}{\theta}, c)$ and either $\alpha<2\sqrt{c}$ or $2\sqrt{c}<\alpha<\frac{1}{\theta}+c\theta$ imposed in  \cite[Theorem 3.2]{ChbaniRBOn(2024)} are not satisfied. Figure \ref{fig:titstfig} shows that the behaviors of  $\|x(t)-x^*\|$, $|f(x(t))-f^*|$, and  $\|Ax(t)-b\|$ under different choices of $p\in(0,1)$.
	\begin{figure*}[h]
		\centering
		{
			\begin{minipage}[t]{0.321\linewidth}
				\centering
				\includegraphics[width=2.0in]{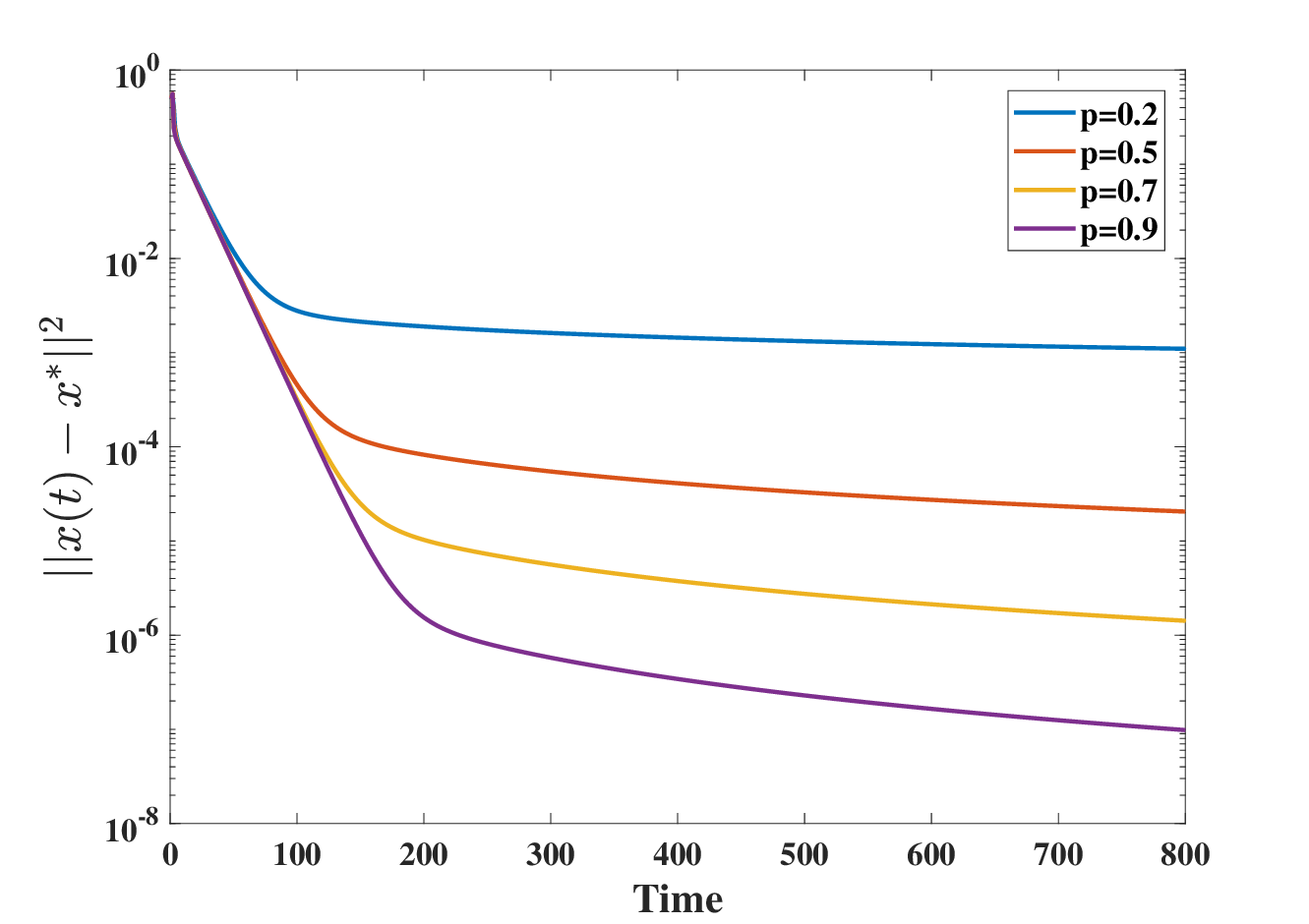}
			\end{minipage}%
		}
		{
			\begin{minipage}[t]{0.321\linewidth}
				\centering
				\includegraphics[width=2.0in]{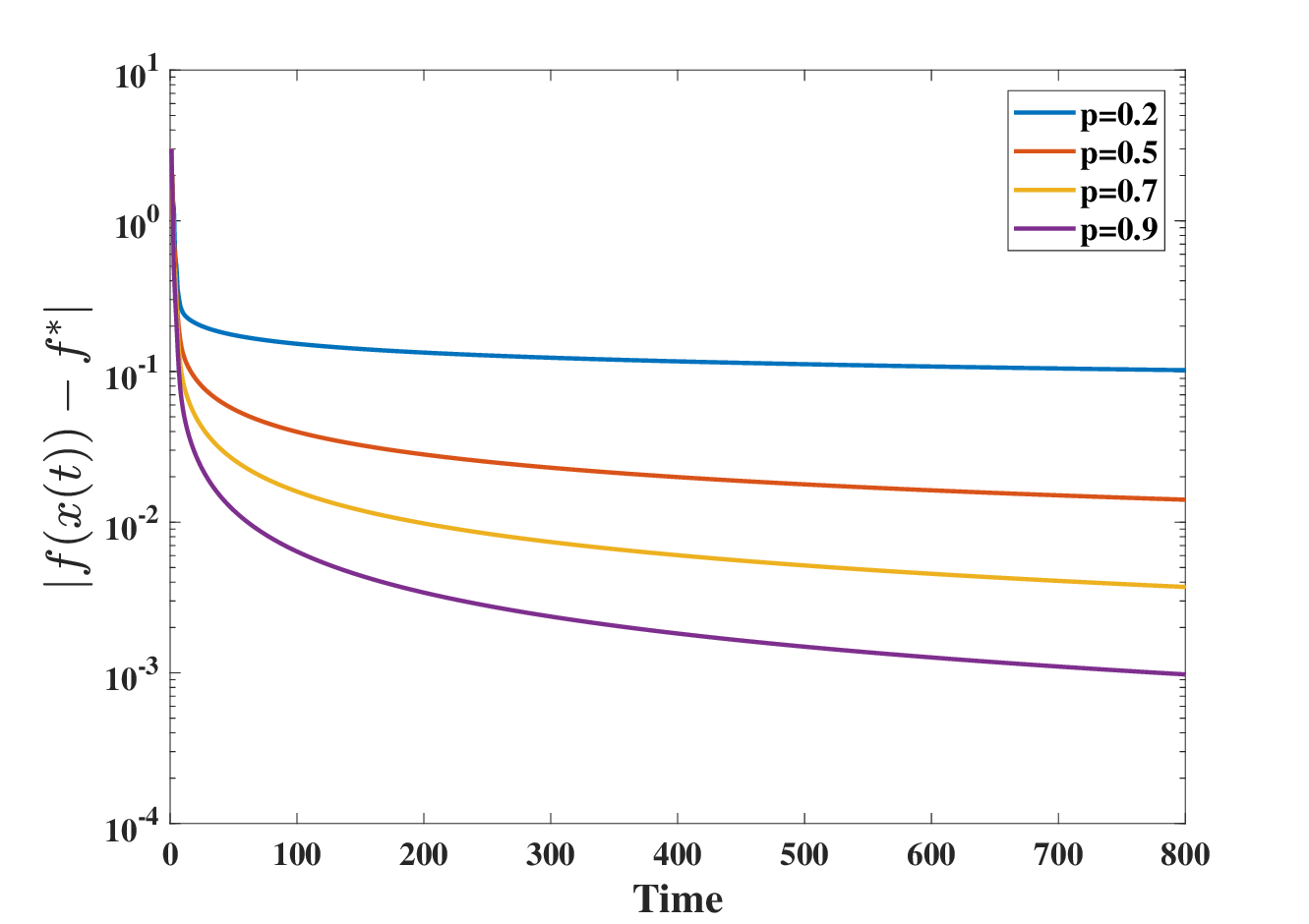}
			\end{minipage}%
		}
		{
			\begin{minipage}[t]{0.321\linewidth}
				\centering
				\includegraphics[width=2.0in]{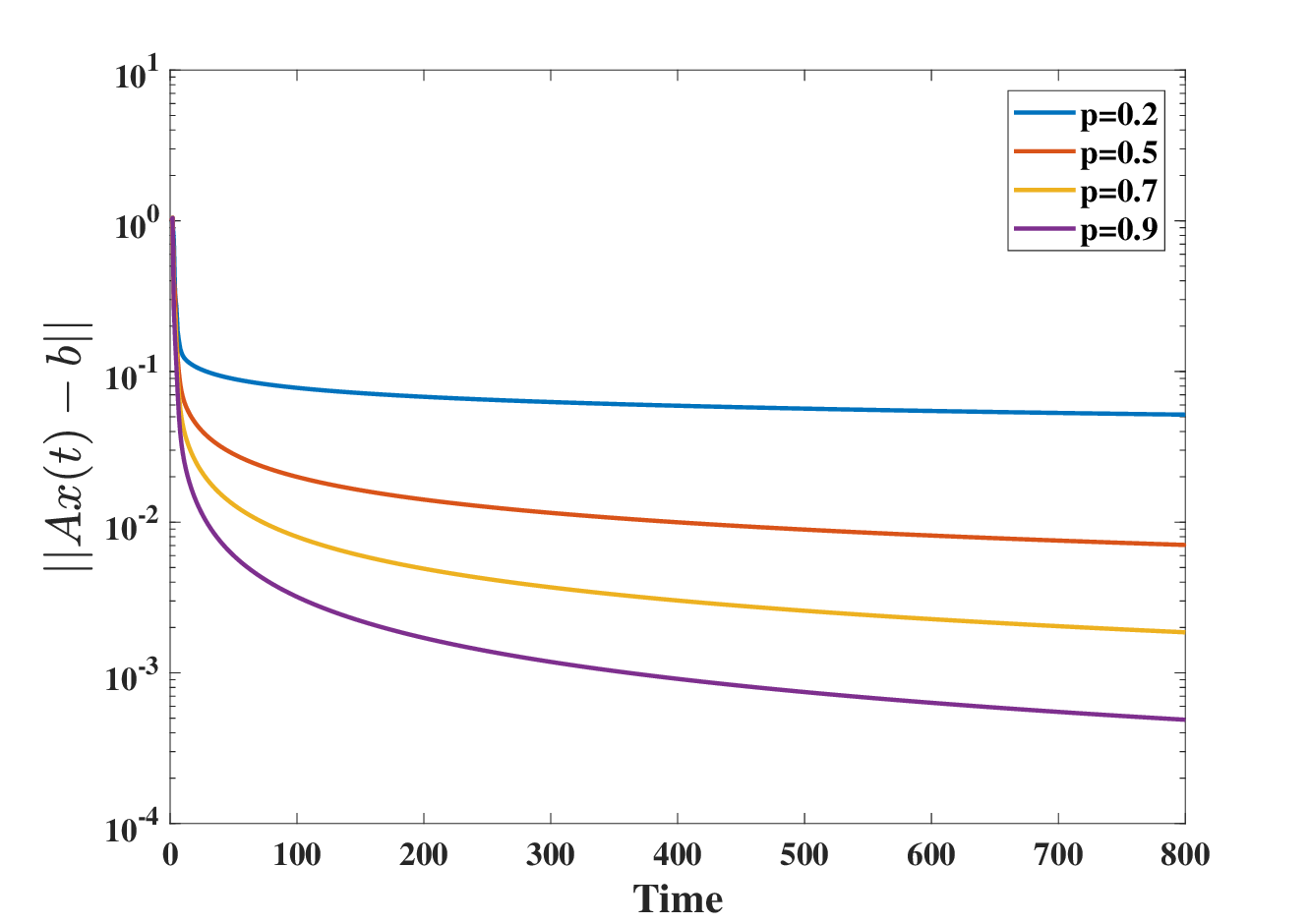}
			\end{minipage}%
		}
		\caption{Error analysis of system \eqref{z2} with $q=0$ and $s=p$ for problem \eqref{zyc1}  with parameter setting which cannot guarantee the conditions of \cite[Theorem 3.2]{ChbaniRBOn(2024)}}
		\label{fig:titstfig}
		\centering
	\end{figure*}

	In the second numerical experiment, we compare our dynamical system \eqref{z2} with $\text{(He-ODE)}$ in \cite{HHFIPD2023} under the different choices of $s$. Take $\theta=1$, $\alpha=3$, $c=0.1$, $q=0.1$, $p=0.6$ and $s\in\{0.15, 0.4, 0.65\}$ in system \eqref{z2} and take $\theta=1$, $\alpha=3$, $\rho=1$, $k=q=0.1$, $\varepsilon(t)=0$ and $\beta(t)=t^s$ with $s\in\{0.15, 0.4, 0.65\}$ in $\text{(He-ODE)}$ \cite{HHFIPD2023}.  For  system \eqref{z2} and  $\text{(He-ODE)}$, we take the same starting points $x(1)=(1,-1,1)^T$, $\lambda(1)=(1)$, $\dot{x}(1)=(1,1,1)^T$, $\dot{\lambda}(1)=(1)$.
 
	\begin{figure*}[h]
		\centering
		\subfloat[Dynamical system \eqref{z2}]
		{
			\begin{minipage}[b]{0.321\linewidth}
				\centering
				\includegraphics[width=2.0in]{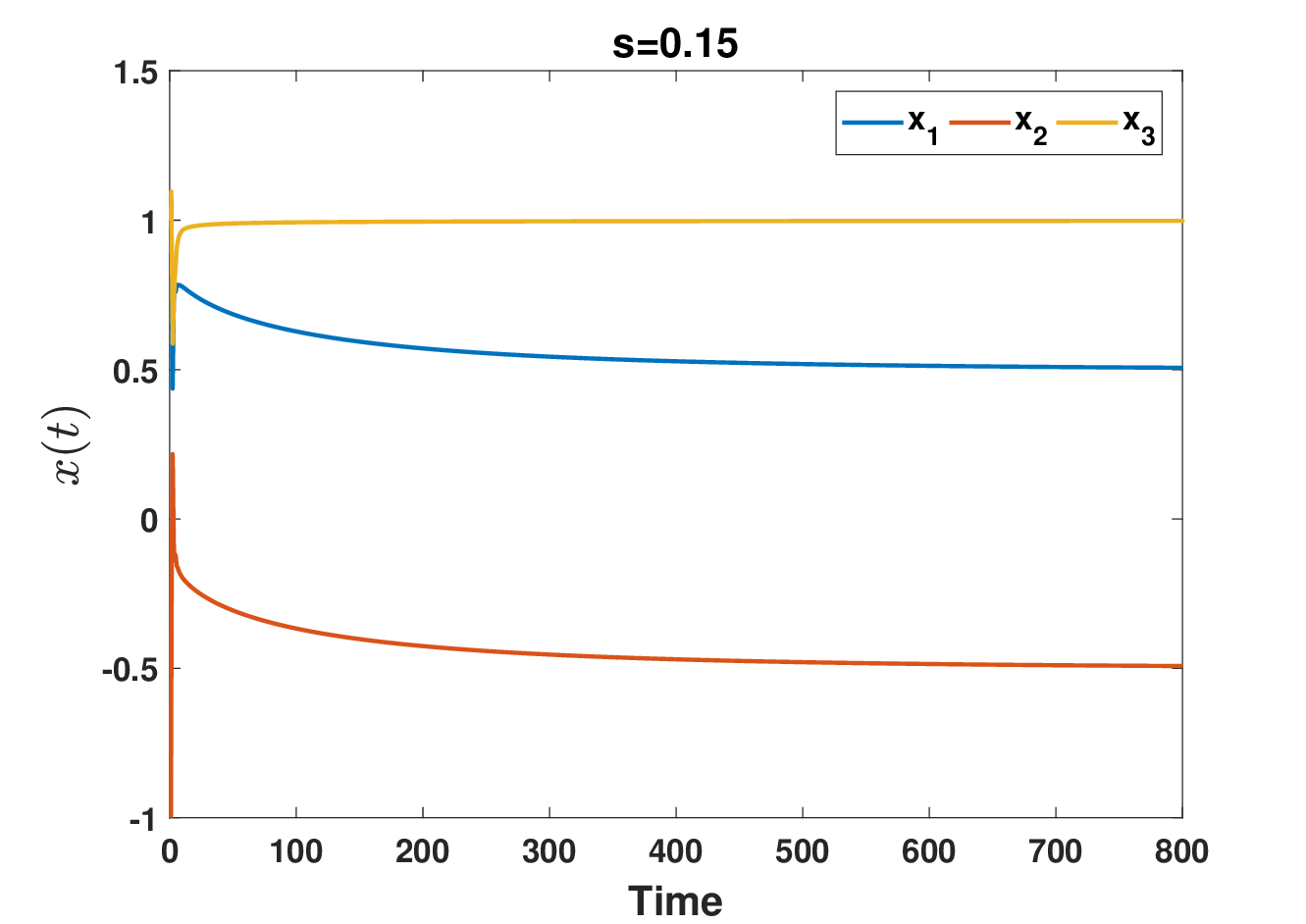}
			\end{minipage}
			\begin{minipage}[b]{0.321\linewidth}
				\centering
				\includegraphics[width=2.0in]{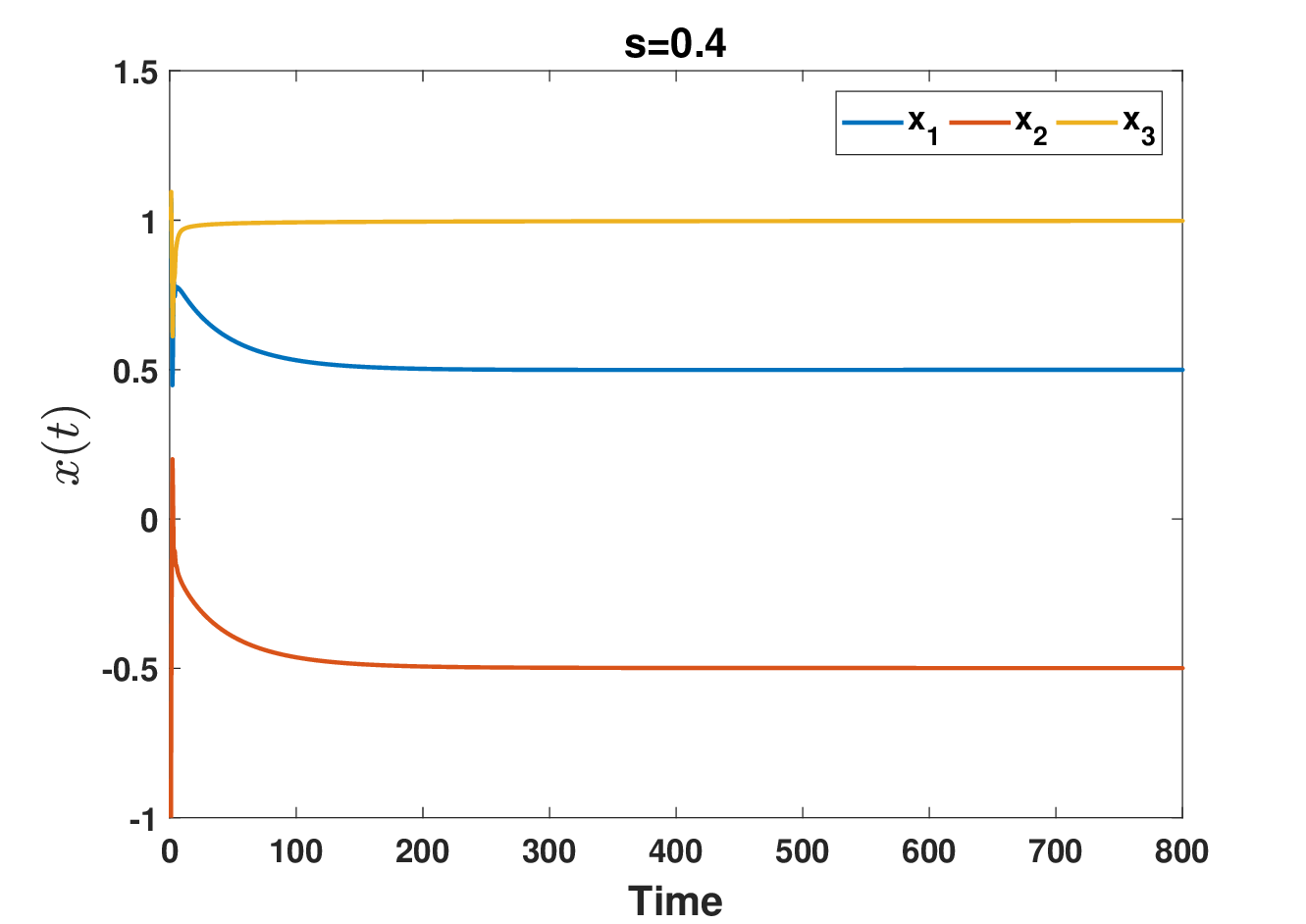}
			\end{minipage}
			\begin{minipage}[b]{0.321\linewidth}
				\centering
				\includegraphics[width=2.0in]{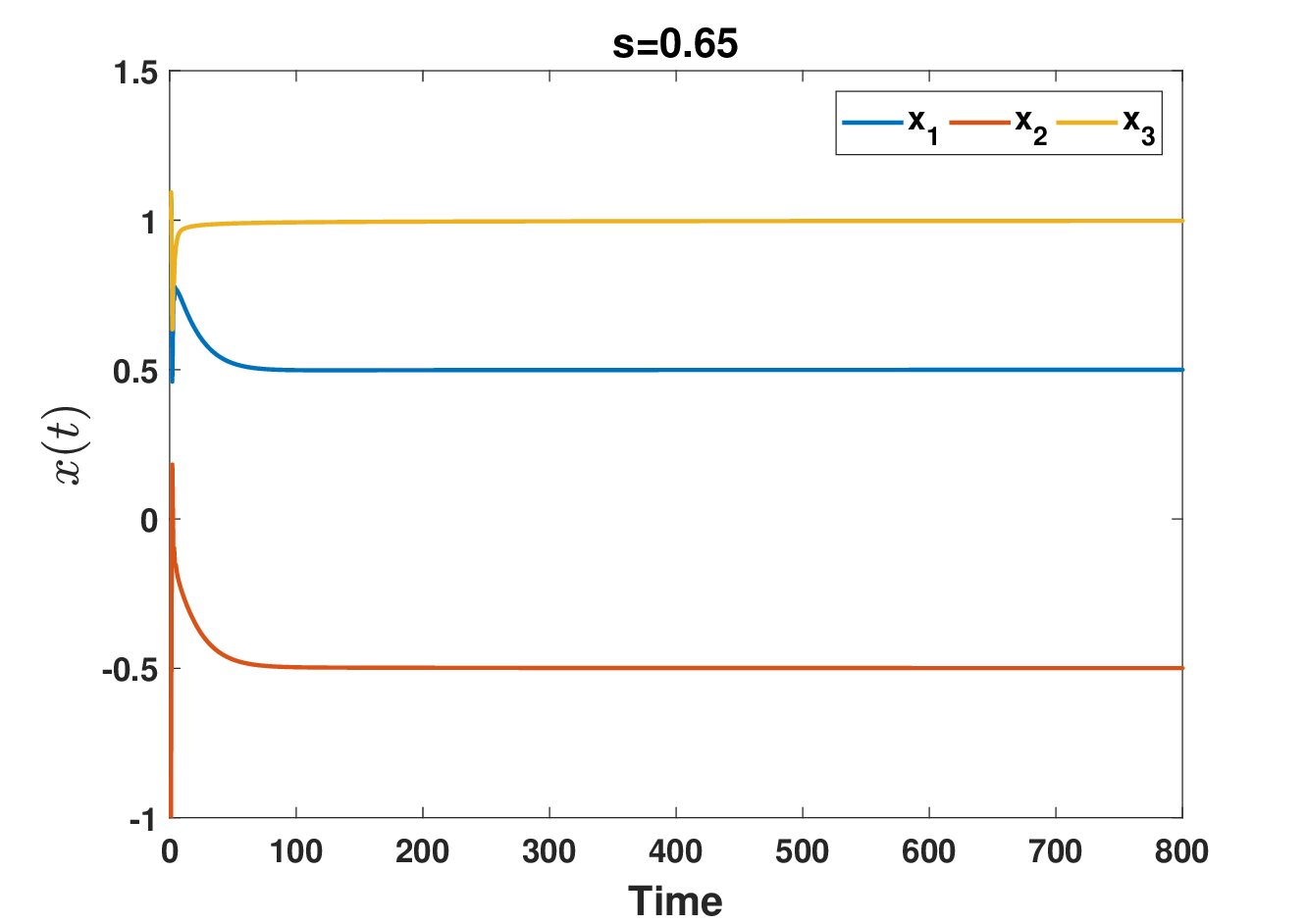}
			\end{minipage}
		}\\
		\subfloat[(He-ODE)]
		{
			\begin{minipage}[b]{0.321\linewidth}
				\centering
				\includegraphics[width=2.0in]{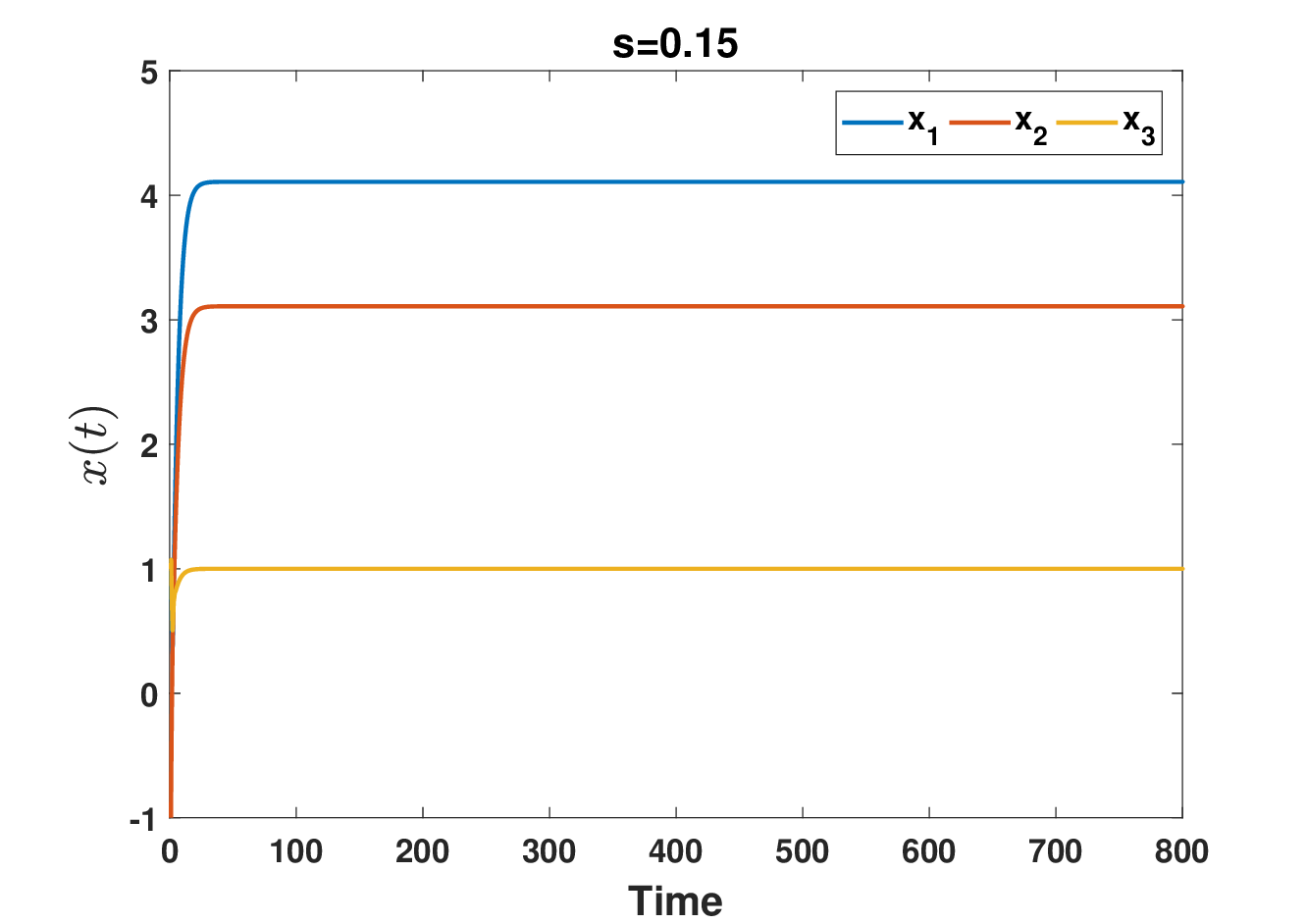}
			\end{minipage}
			\begin{minipage}[b]{0.321\linewidth}
				\centering
				\includegraphics[width=2.0in]{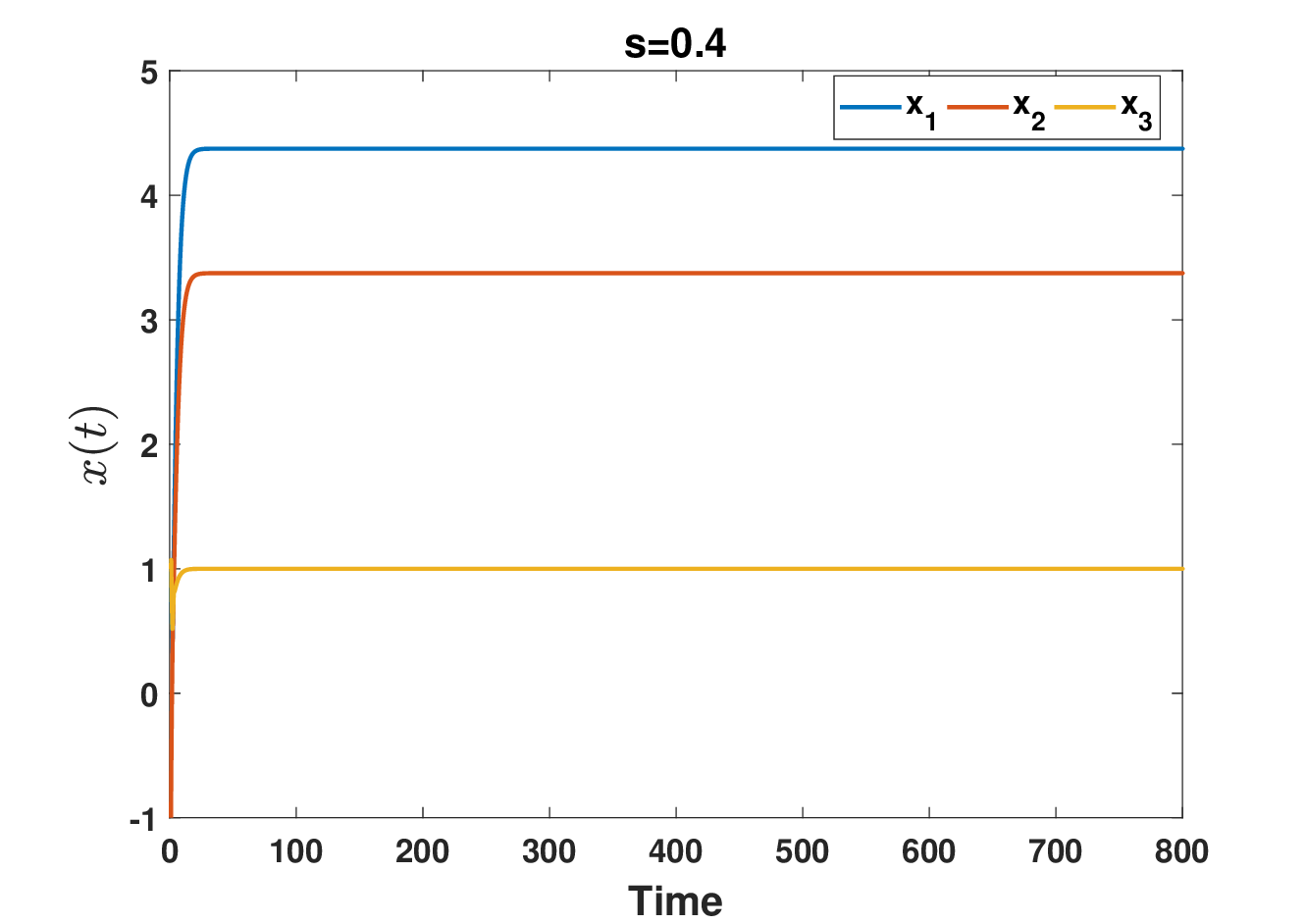}
			\end{minipage}
			\begin{minipage}[b]{0.321\linewidth}
				\centering
				\includegraphics[width=2.0in]{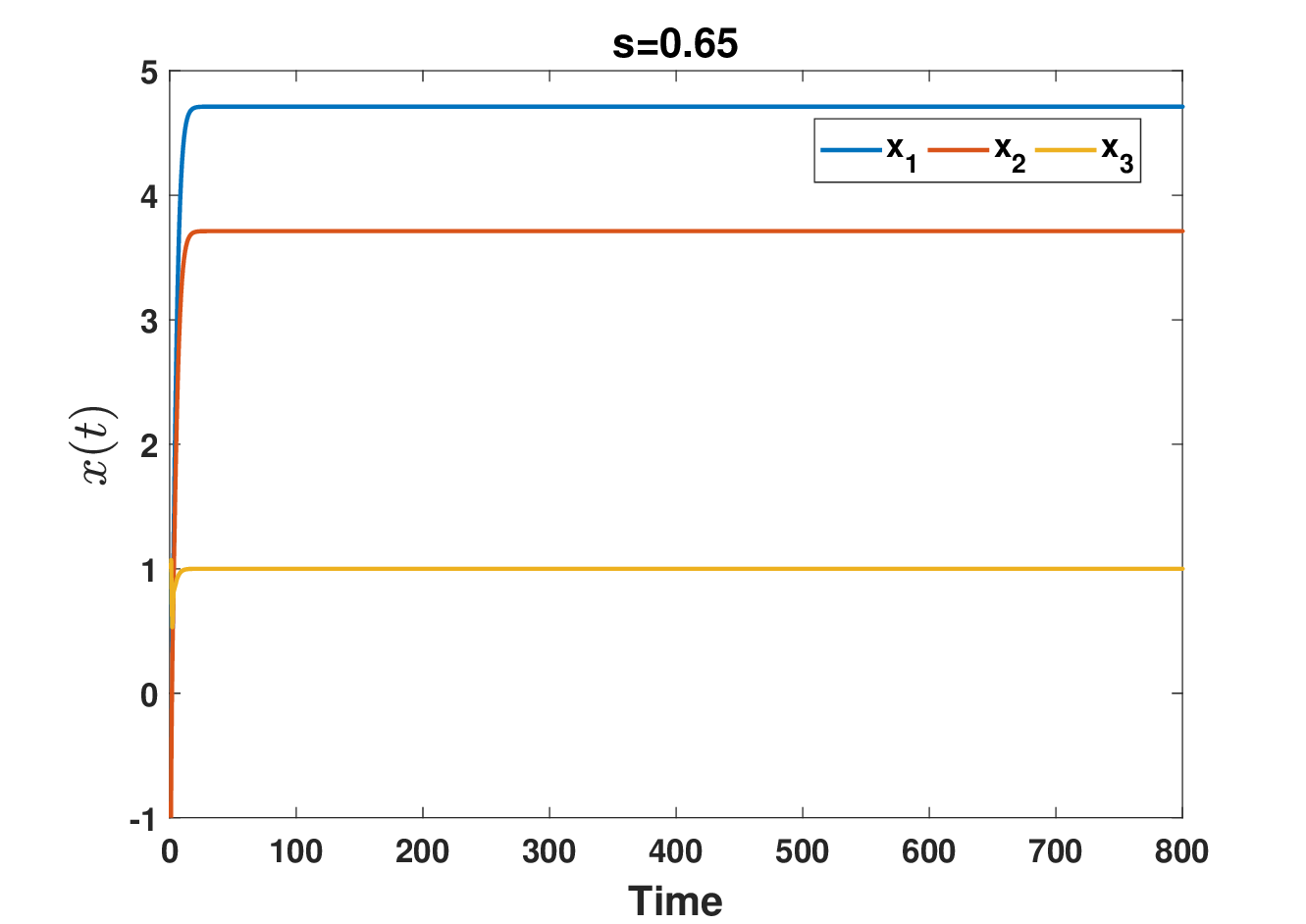}
			\end{minipage}
		}
		\caption{Comparisons of system \eqref{z2} and  $\text{(He-ODE)}$  in the the primal trajectory $x(t)$}
		\label{fig:titstfig2}
		\centering
	\end{figure*}
	
	As shown in  Figure \ref{fig:titstfig2},  the trajectory $x(t)$ of system \eqref{z2} converges to the minimal norm solution $x^*$ of problem \eqref{zyc1}, while the trajectory $x(t)$ of $\text{(He-ODE)}$ converges  to a solution of of problem \eqref{zyc1} which need not to be the minimal norm  solution $x^*$. 
	
	In the third numerical experiment, we display the behaviors of  $\|x(t)-x^*\|$, $|f(x(t))-f^*|$, and  $\|Ax(t)-b\|$ along the trajectory of system \eqref{z2}  under the different choices of the parameters $q$, $p$ and $s$. Take $\theta=1$, $\alpha=3$, $c=0.1$, $q=0.1$, $q\in\{0, 0.1\}$, $p\in\{0.2,0.4,0.6,0.8\}$ and $s\in\{-0.35,0.35,0.55,0.85\}$ in system \eqref{z2}. Figure \ref{fig:titstfig3} shows  the numerical results support  the theoretical results  in Theorem \ref{ztt3.1} and  Theorem \ref{ztt3.2}.

	\begin{figure*}[h]
		\centering
		{
			\begin{minipage}[t]{0.321\linewidth}
				\centering
				\includegraphics[width=2.0in]{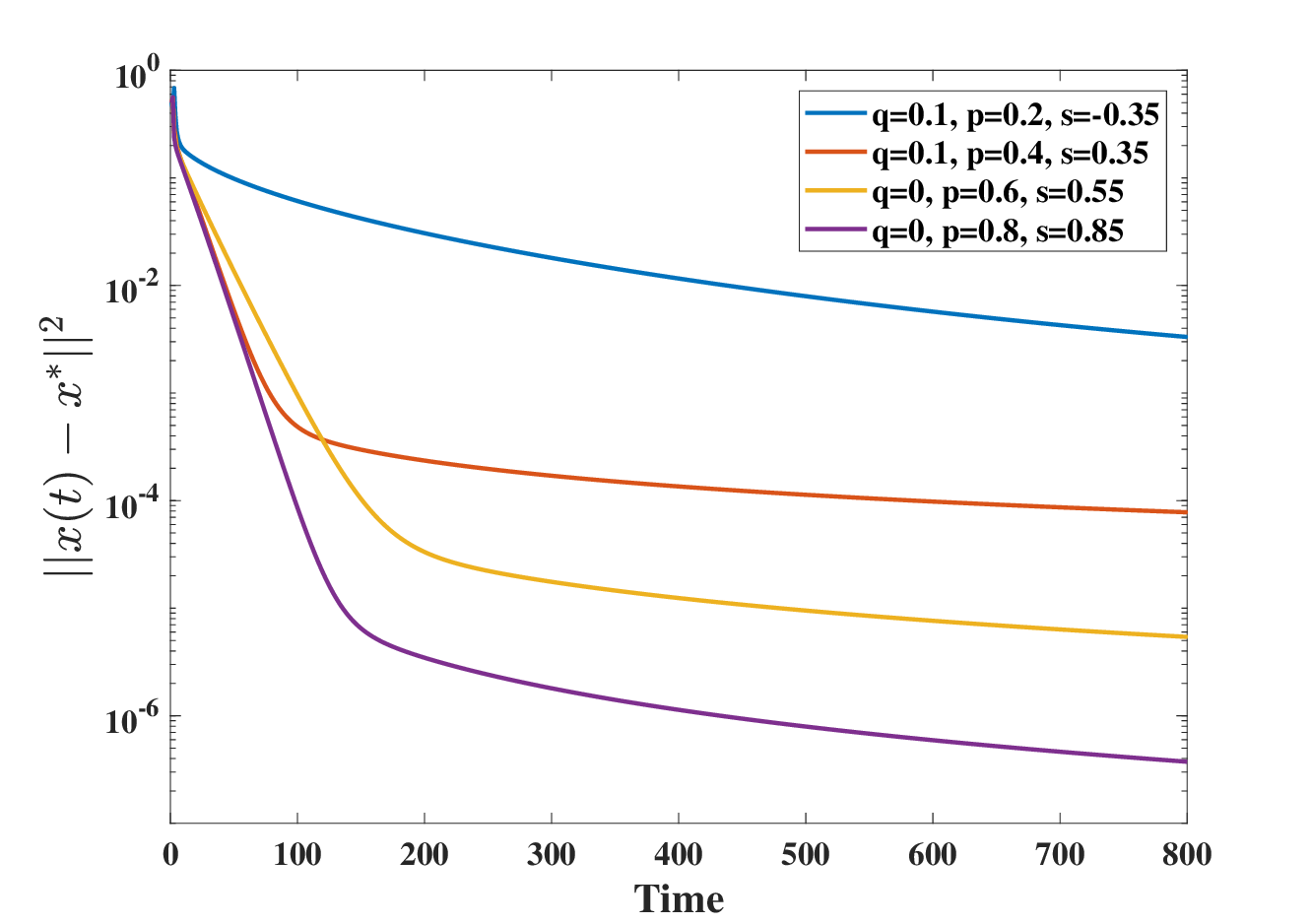}
			\end{minipage}%
		}
		{
			\begin{minipage}[t]{0.321\linewidth}
				\centering
				\includegraphics[width=2.0in]{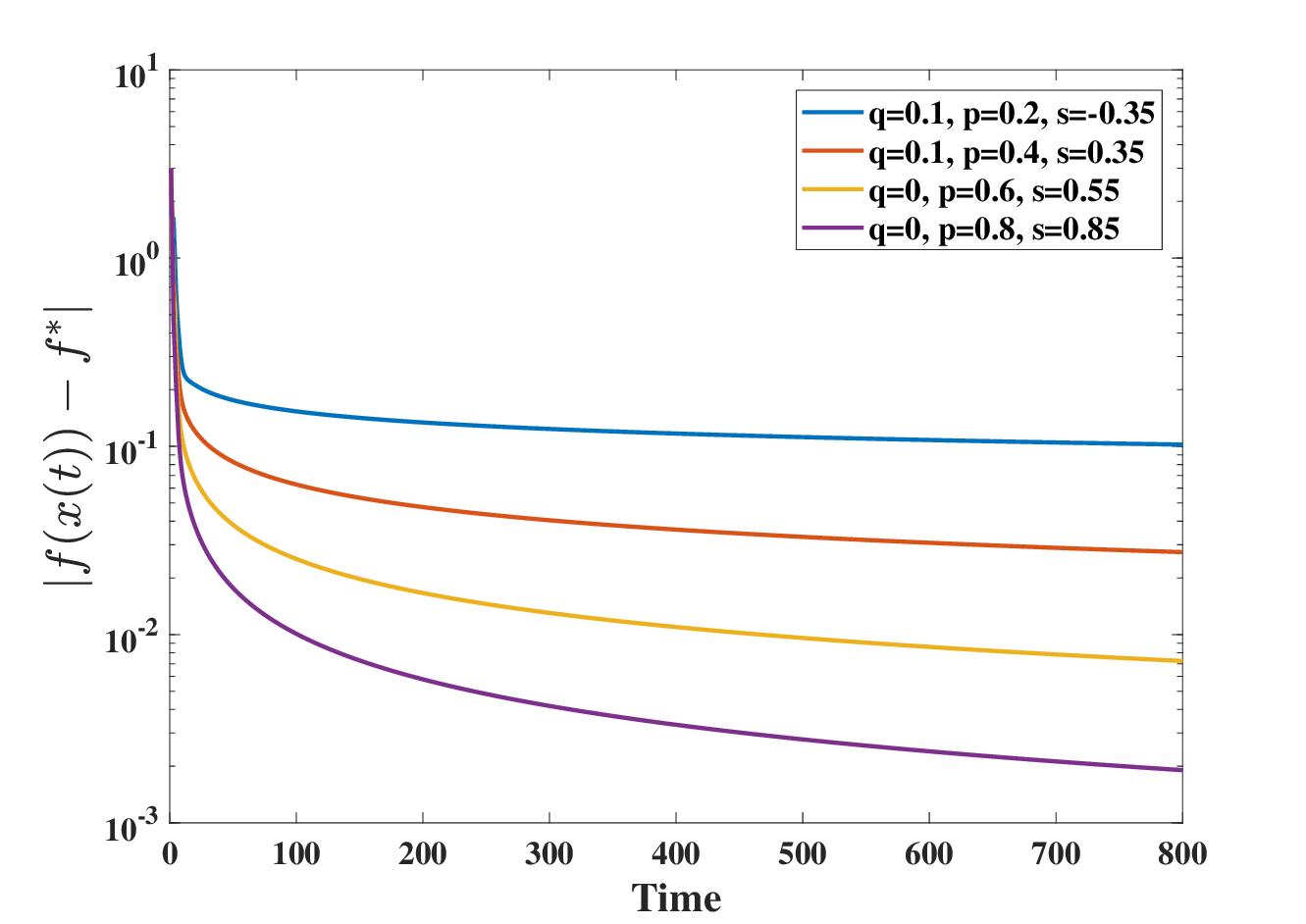}
			\end{minipage}%
		}
		{
			\begin{minipage}[t]{0.321\linewidth}
				\centering
				\includegraphics[width=2.0in]{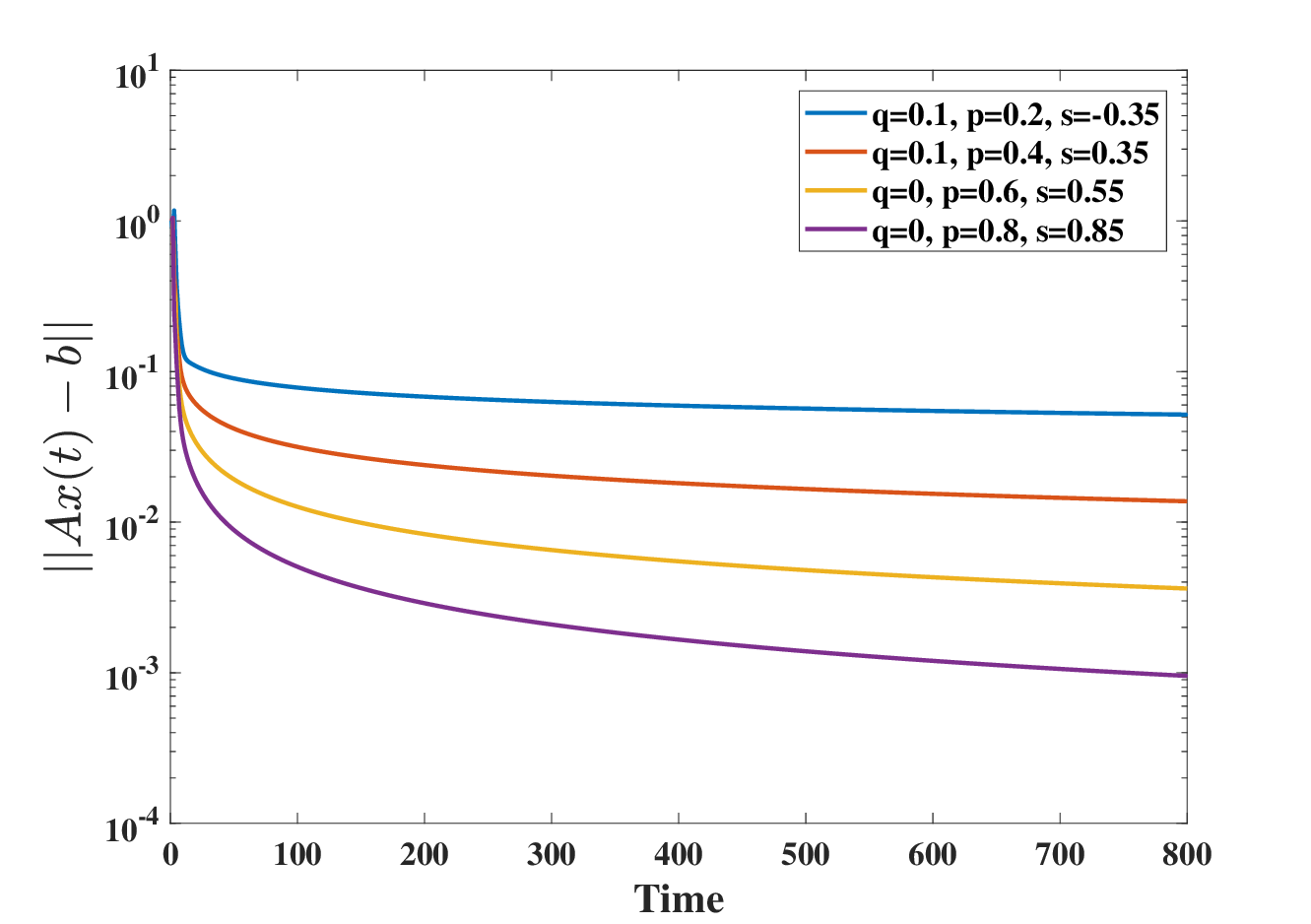}
			\end{minipage}%
		}
		\caption{Error analysis of dynamical system \eqref{z2}  with different $q$, $p$ and $s$ for problem \eqref{zyc1}}
		\label{fig:titstfig3}
		\centering
	\end{figure*}

\section{Conclusion}

In a Hilbert setting, we develop  a second-order plus first-order  primal-dual dynamical system, controlled by a Tikhonov regularization term, with a slow damping $\frac{\alpha}{t^q}$ with $0\le q<1$ and prove the strong convergence  of the trajectory of the proposed dynamical system to the minimal norm primal-dual solution, along with convergence rate results of the primal-dual gap, the objective residual and the feasibility violation. Let us emphasize that  the proofs of  the convergence rates of the objective residual and the feasibility violation are based on  new developed Lemma  \ref{lemma2.2} and Lemma  \ref{lemma2.3}, by which we can fill the gap in the proof of \cite[Theorem 3.2]{ChbaniRBOn(2024)}.

	%\appendix
	%\section{Some auxiliary results}\label{append}
	
	%\section*{Acknowledgments}

\end{document}